\documentclass[12pt]{article}
\usepackage{amssymb,amsbsy,amsmath,amsfonts,amssymb,amscd,amsthm}
\usepackage[T1]{fontenc}
\usepackage[utf8]{inputenc}
\usepackage{graphicx}
\usepackage[all]{xy}
\usepackage{pdfpages}
\usepackage[linktocpage]{hyperref}

\usepackage{latexsym, amssymb, amsthm}
\usepackage{dsfont}
\usepackage{color}
\usepackage{enumerate}

\newcommand{\NN}{\mathds{N}}

\newcommand{\RR}{\mathds{R}}
\newcommand{\CC}{\mathds{C}}

\newcommand{\ZZ}{\mathds{Z}}
\newcommand{\EE}{\mathds{E}}

\textheight=23cm
\textwidth = 6.375 true in
\topmargin=-20mm
\marginparsep=0cm
\oddsidemargin=-0.5cm
\evensidemargin=-0.5cm
\headheight=12pt
\headsep=0.8cm
\parskip=0pt
\baselineskip=27pt
\hfuzz=4pt
\widowpenalty=10000

\DeclareMathAlphabet\gothic{U}{euf}{m}{n}

\setlength{\marginparwidth}{1 true in}

\newtheorem{theorem}{Theorem}
\newtheorem{lemma}[theorem]{Lemma}
\newtheorem{proposition}[theorem]{Proposition}
\newtheorem{corollary}[theorem]{Corollary}

\newtheorem{remark}[theorem]{Remark}

\numberwithin{theorem}{section}
\numberwithin{equation}{section}

\date{}
\begin{document} 
\title{Littlewood-Paley-Stein functionals: an ${\mathcal R}$-boundedness approach}
\author{ Thomas Cometx and El Maati Ouhabaz }
\maketitle     

\begin{abstract}
Let $L = \Delta + V$ be a Schr\"odinger operator with a non-negative potential $V$ on a complete Riemannian manifold $M$. We prove that the vertical Littlewood-Paley-Stein functional associated with $L$ is bounded on $L^p(M)$ {\it if and only if} the set $\{\sqrt{t}\, \nabla e^{-tL}, \, t > 0\}$ is ${\mathcal R}$-bounded on $L^p(M)$. We also introduce and study more general functionals. 
For a sequence of functions $m_k : [0, \infty) \to \CC$, we define 
\[
H((f_k)) := \Big( \sum_k \int_0^\infty | \nabla m_k(tL) f_k |^2 dt \Big)^{1/2} +   \Big( \sum_k \int_0^\infty | \sqrt{V} m_k(tL) f_k |^2 dt \Big)^{1/2}.
 \]
Under fairly reasonable  assumptions on $M$ we prove boundedness of $H$ on $L^p(M)$ in the sense 
\[ 
\| H((f_k)) \|_p \le C\,  \Big\| \Big( \sum_k |f_k|^2 \Big)^{1/2} \Big\|_p
\]
for some constant $C$ independent of $(f_k)_k$. A lower estimate is also proved on the dual space $L^{p'}$.  We introduce and study boundedness of other Littlewood-Paley-Stein type functionals and discuss their relationships to the Riesz transform.   Several examples are  given  in the paper.
\end{abstract}

\vspace{5mm}
\noindent

\vspace{5mm}
\noindent
{\bf Keywords}:  Littlewood-Paley-Stein functionals, Riesz transforms, Kahane-Khintchine inequality, spectral multipliers, Schr\"odinger operators, elliptic operators.

\vspace{10mm}

\noindent
{\bf Home institution:}    \\[3mm]
Institut de Math\'ematiques de Bordeaux \\ 
Universit\'e de Bordeaux, UMR 5251,  \\ 
351, Cours de la Lib\'eration  \\
33405 Talence. France.\\
Thomas.Cometx@math.u-bordeaux.fr\\
Elmaati.Ouhabaz@math.u-bordeaux.fr \\[8mm]

\newpage

\section{Introduction}\label{sec0}
Let $M$ be a complete non-compact Riemannian manifold and denote by $\nabla$ and $\Delta$ the corresponding gradient and the (positive) Laplace-Beltrami operator, respectively. One of the classical problems in harmonic analysis on manifolds concerns the boundedness on $L^p(M)$ of the  Riesz transform $R := \nabla\Delta^{-1/2}$. By integration by parts, it is obvious that $\| \nabla u \|_2 = \|\Delta^{1/2} u\|_2$ for all $u \in W^{1,2}(M)$. Therefore, the operator $R$, initially  defined on the range of $\Delta^{1/2}$ (which is dense in $L^2(M)$) has a bounded extension to $L^2(M)$. Note that $R$ takes values in $L^2(M, TM)$ where $TM $ is the tangent space. Alternatively, the  Riesz transform may also be defined by $d\Delta^{-1/2}$ where $d$ is  the exterior derivative. In this case $R$ takes values in the $L^2$ space of differential forms of order $1$. It is a singular integral operator with a kernel which may not be smooth. For this reason it is a difficult problem to know whether $R$ extends to a bounded operator on $L^p(M)$ for some or all $p \in (1,\infty)$. This problem has been studied by several authors during the last decades. We do not give an account on the subject and we refer the reader to \cite{ACDH, Bakry, Carron, ChenMagOuh, CoulhonDuong, CoulhonCPAM, Guillarmou-Hassell} and the references therein. \\
If the Riesz transform is bounded on $L^p(M)$, then it follows immediately from the analyticity of the heat semigroup that 
\begin{equation}\label{intro-1}
 \Big\| \sqrt{t} \nabla e^{-t\Delta} \Big\|_{{\mathcal L}(L^p(M), L^p(M, TM))} \le C \ \ \forall t > 0.
 \end{equation}
A natural question is whether \eqref{intro-1} is already sufficient to obtain the boundedness of the Riesz transform. This question is still open in general and only few results in this direction are known. It was proved by P. Auscher, Th. Coulhon, X.T. Duong and S. Hofmann \cite{ACDH} that for a manifold  $M$ satisfying  the volume doubling condition and $L^2$-Poincar\'e inequalities then  \eqref{intro-1} for some $p > 2$ implies that the Riesz transform is bounded on $L^r(M)$ for $r \in (1,p)$. See also  F. Bernicot and D. Frey \cite{BF} 
and Th. Coulhon, R. Jiang, P. Koskela and A. Sikora \cite{CJKS} for  related recent results. Note that under the volume doubling property, the $L^2$-Poincar\'e inequalities are equivalent to Gaussian upper and lower bounds for the corresponding heat kernel. The sole Gaussian upper bound together with the volume doubling condition imply the boundedness of the Riesz transform on $L^p(M)$ for $p \in (1,2]$ (cf. Th. Coulhon and X.T. Duong \cite{CoulhonDuong}). 

The study of the Riesz transform is closely related to the study of the Littlewood-Paley-Stein functional
\begin{equation}\label{H-Ga}
 H_\nabla(f) :=  \Big( \int_0^\infty | \nabla e^{-t\Delta} f |^2 dt \Big)^{1/2}
 \end{equation}
or  its variant defined in terms of the Poisson semigroup $e^{-t \sqrt{\Delta}}$. It is known (see Th. Coulhon and X.T. Duong \cite{CoulhonCPAM} or Proposition \ref{prop-multiplication} below) that if $H_\nabla$ is bounded on $L^p(M)$ then \eqref{intro-1} is satisfied.  One might then ask whether 
\eqref{intro-1} is in  turn equivalent to the boundedness of $H_\nabla$. To the best of our knowledge, this question is also open in general. The starting point of  the present paper is that if we strengthen the property that $\{ \sqrt{t} \nabla e^{-t\Delta}, \ t > 0\}$ is uniformly bounded on $L^p(M)$ (i.e., \eqref{intro-1}) into
$\{ \sqrt{t}\, \nabla\, e^{-t\Delta}, \ t > 0\}$ is ${\mathcal R}$-bounded on $L^p(M)$ (Rademacher-bounded or randomized bounded) then $H_\nabla$ is bounded on $L^p(M)$. We prove that the converse is also  true. 
Recall that $\{ \sqrt{t}\, \nabla\, e^{-t\Delta}, \ t > 0\}$ is ${\mathcal R}$-bounded on $L^p(M)$ if for every $t_k > 0$, $f_k \in L^p(M)$, $k = 1, ...,n$, 
\[
\EE \Big\| \sum_{k=1}^n {\gothic r}_k \sqrt{t_k}\,\nabla\, e^{-t_k \Delta}  f_k \Big\|_p \le C\, \EE \Big\| \sum_{k=1}^n {\gothic r}_k f_k \Big\|_p
\]
with a constant $C > 0$ independent of $t_k, f_k$ and $n$. Here, $({\gothic r}_k)_k$ is a sequence of independent Rademacher variables. \\
Actually we deal with  more general versions of the Littlewood-Paley-Stein functional and we also consider  Schr\"odinger operators $L= \Delta +V$ instead of the sole Laplacian. We introduce the functional 
\begin{equation}\label{intro-2}
 H_\Gamma ((f_k)) := \Big( \sum_k \int_0^\infty | \Gamma  m_k(tL) F(tL) f_k |^2\,  dt \Big)^{1/2}
\end{equation}
where  $m_k$ and $F$ are bounded holomorphic functions on a  sector $\Sigma( \omega_p)$ of the right half-plane with some angle $\omega_p$ and $\Gamma$ is either $\nabla$ or multiplication by $\sqrt{V}$. 
We prove that 
\begin{center}
{$a-$ the Riesz transform $\Gamma L^{-1/2} $ is bounded on $L^p$,\\
$\Downarrow$ \\
$b- \{ \sqrt{t}\, \Gamma \, e^{-t L}, \ t > 0\}$ is ${\mathcal R}$-bounded on $L^p$,\\
$\Updownarrow$ \\
$c-$ the Littlewood-Paley-Stein functional $H_\Gamma$ in \eqref{intro-2} is bounded on $L^p$.}
 \end{center}
We do not need  the general form of the square function \eqref{intro-2}  for the implication $c \Rightarrow b$, see Theorem \ref{prop-LPSR}.  We do not know  whether $b \Rightarrow a$ is true in general but we hope that putting into play the ${\mathcal R}$-boundedness idea will  shed some new light  on the problem of  boundedness of the Riesz transform.

Before describing in a more explicit way some  other contributions in this paper we recall some known results on $H_\nabla$.  A classical result of E.M. Stein \cite[Chapter IV]{Stein2} states that $H_\nabla$ is bounded $L^p(\RR^N)$ for all $p \in (1, \infty)$. This was extended to the case of sub-Laplacians on Lie groups in \cite{Stein}. On Riemannian manifolds, the boundedness on $L^p(M)$ was also considered. N. Lohou\'e \cite{Lohoue} proved several results in the setting of Cartan-Hadamard manifolds. See also J.C. Chen  \cite{Ji-Cheng}. For $p \in (1,2]$, the method  of Stein works in the general setting of any complete Riemannian manifold as pointed out by Th. Coulhon, X.T. Duong and X.D. Li  in \cite{CDL-Studia}. More precisely, it is proved there that $H_\nabla$ is bounded on 
$L^p(M)$ for all $p \in (1, 2]$ and if in addition the manifold satisfies the doubling condition \eqref{doubl} and a Gaussian upper bound \eqref{gauss}
for the corresponding heat kernel then $H_\nabla$ is of weak type $(1,1)$. We also refer to \cite{CDL-Studia} for references to other related works. These questions are also studied for elliptic operators in divergence form, we refer to the work of P. Auscher, S. Hofmann and J.M. Martell \cite{Auscher-Hofmann} for recent advance and references. For a given Schr\"odinger operator 
$L = \Delta + V$ with a non-negative potential $V \in L^1_{loc}(M)$, the method of Stein can be used to  prove  that the functional
\[ 
H(f) := \Big(  \int_0^\infty | \nabla e^{-tL} f |^2 dt \Big)^{1/2} + \Big(  \int_0^\infty | \sqrt{V} e^{-tL} f |^2 dt \Big)^{1/2}
\]
is bounded on $L^p(M)$ for all $p \in (1,2]$. See E.M. Ouhabaz \cite{Ouhabaz}. The situation is different for $p > 2$ and negative results, even for $M = \RR^N$, are given in \cite{Ouhabaz}.\\
We mention  that there are  the so-called horizontal Littlewood-Paley-Stein functionals. These functionals are of the form $\Big( \int_0^\infty | \varphi(tL) f|^2\, \frac{dt}{t} \Big)^{1/2}$ for a bounded holomorphic function $\varphi$ in a certain sector of $\CC^+$. 
They do not involve the gradient term or multiplication by $\sqrt{V}$.  Such functionals are easier to handle and their boundedness on $L^p$ can be obtained from the  bounded holomorphic functional calculus. See M. Cowling, I. Doust, A. McIntosh and A. Yagi \cite{CDMY} or Ch. Le Merdy \cite{LeMerdy} and the references therein.  

In the present paper we prove in a general setting that for all $p \in (1, 2]$ and  $F$ such that $| F(z) | \le \frac{C}{|z|^\delta}$ as $z \to \infty$ and $| F'(z) | \le \frac{C}{|z|^{1-\epsilon}}$ as $z \to 0$ for some $\delta > \frac{1}{2}$ and $\epsilon > 0$, then $H_\Gamma$ is bounded on $L^p(M)$ in the sense that there exists a constant 
$C > 0$, independent of $(f_k)$,  such that 
\begin{equation}\label{eq0} 
\| H_\Gamma((f_k)) \|_p \le C\, \sup_k \| m_k \|_{H^\infty(\Sigma(\omega_p))} \big\| \Big( \sum_k |f_k|^2 \Big)^{1/2}  \Big\|_p,
\end{equation}
where 
$$ \| m_k \|_{H^\infty(\Sigma(\omega_p))} = \sup_{z \in \Sigma(\omega_p)} | m_k(z) |.$$
See Theorem \ref{thm-hol} below. The particular case $k = 1$, $m_1 = 1$ and $F(z) = e^{-z}$ corresponds to the standard Littlewood-Paley-Stein functional which we  discussed before.  This result holds for  $p \in (2, \infty)$ under the assumption that $\{ \sqrt{t}\, \nabla\, e^{-t\Delta}, \ t > 0\}$ is ${\mathcal R}$-bounded on $L^p(M)$. We also prove a similar result for the functional
 \[
 G_\Gamma ((f_k)) := \Big( \sum_k \int_0^\infty | \Gamma m_k(tL) f_k |^2\,  dt \Big)^{1/2}
 \]
 for compactly supported functions $m_k$ which belong to a certain Sobolev space (see Theorem \ref{thm-mult}). There is a standard duality argument which provides a reverse inequality on the dual space for the classical Littlewood-Paley-Stein functional.  We adapt the argument to our general setting and prove a reverse inequality in $L^q(M)$ ($\frac{1}{q} + \frac{1}{p} = 1$) for  the previous  Littlewood-Paley-Stein functionals. See Theorem \ref{thm-reverse}. 
 
 The proof of Theorem \ref{thm-hol} uses heavily the fact that $L$ has a bounded holomorphic functional calculus on $L^p(M)$ and as a consequence $L$ satisfies square function estimates. In addition, $m_k(L), k \ge 1$  is ${\mathcal R}$-bounded on $L^p(M)$ by a result of N.J. Kalton and L. Weis \cite{KW}. This does not apply mutatis mutandis to the functional $G_\Gamma$. Instead we rely on a  recent result by L. Deleaval and Ch. Kriegler \cite{DK}. 
 
We introduce the local Littlewood-Paley-Stein functional and  the Littlewood-Paley-Stein functional at infinity defined respectively by
\[
H^{loc}_\Gamma(f) := \Big(  \int_0^1 | \Gamma e^{-tL} f |^2 dt \Big)^{1/2} \ {\rm and} \ H^{(\infty)}_\Gamma(f) := \Big( \int_1^\infty | \Gamma e^{-tL} f |^2 dt \Big)^{1/2}. 
\]
We study the boundedness on $L^p(M)$ of $H^{loc}_\Gamma$ (respectively,
$H^{(\infty)}_\Gamma$) and their relationship to the  local Riesz transform $R_{loc} := \Gamma( L+ I)^{-1/2}$ (respectively the Riesz transform at infinity $R_\infty := \nabla L^{-1/2} e^{-L}$).\footnote{The {\it quasi-} Riesz transforms $R_{loc}$ and $R_\infty$ were studied by L. Chen \cite{Chen2014} for the Laplace-Beltrami operator.}  For example, if $L = \Delta$ and $M$ has Ricci curvature bounded from below, then it is well known that $R_{loc}$
is bounded on $L^p(M)$ for all $p \in (1,\infty)$ (see D. Bakry \cite{Bakry}).  As a consequence we obtain that $H^{loc}_\nabla$ is bounded on $L^p(M)$ for all $p \in (1,\infty)$ and the lower bound
\[
C\, \| f \|_q \le  \| e^{-\Delta} f \|_q + \| H^{loc}(f)  \|_q 
\]
holds for all  $q \in (1, \infty)$. 

We give several examples in Section \ref{sec5} including Schr\"odinger operators on $\RR^N$ with a potential in a reverse H\"older class or  Schr\"odinger operators on manifolds. We shall see that for the connected sum $M_n := \RR^n \# \RR^n$ ($n \ge 2$) the Littlewood-Paley-Stein at infinity is not bounded on $L^p(M_n)$ for $p > n$. The fact that the Riesz transform is not bounded on $L^p(M_n)$ for $p > n$ was proved by Th. Coulhon and X.T. Duong \cite{CoulhonDuong}.

Although we focus on Schr\"odinger operators on manifolds, our results are also valid for elliptic operators on rough domains. Let $\Omega$ be an open subset of $\RR^N$ and consider on $L^2(\Omega)$ an elliptic operator $L= -div(A(x) \nabla \cdot)$ with real symmetric and bounded measurable coefficients. The operator $L$ is subject to the Dirichlet boundary conditions. Then \eqref{eq0} holds on $L^p(\Omega)$ for all $p \in (1, 2]$. As a particular case of the reverse inequality, we obtain  for $q \in [2, \infty)$
\begin{equation}\label{eq0-3}
C\, \| f \|_{L^q(\Omega)} \le  \Big\| \Big( \int_0^\infty | \nabla e^{-tL} f |^2 dt \Big)^{1/2} \Big\|_{L^q(\Omega)}
\end{equation}
and
\begin{equation}\label{eq0-4}
C\, \| f \|_{L^q(\Omega)} \le \| e^{-L} f \|_{L^q(\Omega)} + \Big\| \Big( \int_0^1 | \nabla e^{-tL} f |^2 dt \Big)^{1/2} \Big\|_{L^q(\Omega)}.
\end{equation}
We point out that no regularity assumption is required  on the domain nor on the coefficients of the operator. For another proof of \eqref{eq0-4} and related inequalities on a smooth domain, we refer to a recent paper by O. Ivanovici and F. Planchon \cite{Planchon}. 
If  $\Omega = \RR^N$, we prove that the  lower bounds  \eqref{eq0-3} and \eqref{eq0-4} are valid  for all $q \in (1, \infty)$.\\

\noindent {\bf Notation.}  We denote by $d$ the exterior derivative. We use either $\nabla L^{-1/2}$ or $dL^{-1/2}$ for the Riesz transform. We often write  $| \nabla f (x) |$ (or $| d f(x)|$) for the norm  in $T_xM$ (or in $T^*_xM$)  and we sometimes write $| \nabla f(x) |_x$ to emphasize the dependence of this norm in the point $x$. 
We use the notation $L^p(\Lambda^1T^*M) := L^p(M, T^*M)$ for the $L^p$-space of differential forms of order $1$ on $M$. For a Banach space $E$, $L^p(M, E)$ denotes the $L^p$ space of functions with values in $E$. As usual, the boundedness of the Riesz transform $\nabla L^{-1/2}$ on $L^p(M)$ means that $\nabla L^{-1/2}$, initially defined on the range of $L^{1/2}$, extends to a bounded operator from $L^p(M)$ into $L^p(M, TM)$. \\
For a given Banach space $E$, we use $\| . \|_E$ to denote its norm and the $L^p$-norm will be denoted by $\| . \|_p$ as usual. We shall use $dx$  for the Riemannian measure on  $M$.  
Finally, all inessential constants are denoted by $C, C',c...$\\

\noindent{\bf Acknowledgements.} The authors thank Christoph Kriegler for stimulating discussions and for pointing out the recent paper \cite{DK}. They  also thank Bernhard Haak for a precious help in the proof of Theorem \ref{thm-hol} and   the anonymous referee for his/her comments.\\
This research is partly supported by the ANR project RAGE,  ANR-18-CE-0012-01.

\section{Preliminary results}\label{sec1} 
This section is essentially a preparation for the next ones.   We start off by recalling some well known tools on the holomorphic functional calculus, square functions and ${\mathcal R}$-boundedness of a family of operators.

Let $\omega \in (0, \pi)$ and set 
$$\Sigma(\omega) := \{ z \in \CC, z \not=0, \, |\arg(z) | < \omega \}$$
the open sector of $\CC^+$ with angle $\omega$. We denote by $H^\infty(\Sigma(\omega))$ the set of bounded holomorphic functions on $\Sigma(\omega)$. By $H_0^\infty(\Sigma(\omega))$ we denote the subset 
$$H_0^\infty(\Sigma(\omega)) = \left\{ F \in H^\infty(\Sigma(\omega)), \exists\,  C, s > 0:\,  | F(z) | \le \frac{C |z|^s}{1+ |z|^{2s}} \, \forall z \in \Sigma(\omega) \right\}.$$
Consider a closed, densely defined  operator $A$ with dense range on a Banach space $E$ and suppose that it satisfies the basic resolvent estimate
$$ \| (\lambda I - A)^{-1} \| \le \frac{C}{ |\lambda |} \ \ \forall\, \lambda \notin \Sigma(\omega).$$
One defines the bounded operator $F(A)$ for $F \in H_0^\infty(\Sigma(\omega))$ by the standard Cauchy formula
\[ F(A) = \frac{1}{2\pi i} \int_\gamma F(z) (z I - A)^{-1} dz\]
on an appropriate contour $\gamma$. One says that $A$ has a bounded holomorphic functional calculus with angle $\omega$ if for some constant $C_\omega > 0$ 
$$ \| F(A) \|_{{\mathcal L}(E)} \le C_\Omega\, \|F \|_{H^\infty(\Sigma(\omega))} := C_\omega \sup_{z \in \Sigma(\omega)} |F(z)| $$
for all $F \in H_0^\infty(\Sigma(\omega))$. 
In this case, for every $F \in H^\infty(\Sigma(\omega))$, $F(A)$ is well defined and satisfies the same estimate as above. We refer to \cite{CDMY} for all the details. \\
One  of the most important consequences of the  holomorphic functional calculus in harmonic analysis concerns square function estimates. Set $E = L^p(X, \mu)$. For  $F \in H_0^\infty(\Sigma(\omega))$, we define for $g \in E$,
$$ \Big( \int_0^\infty | F(tA)g |^2\,  \frac{dt}{t} \Big)^{1/2}.$$
It turns out that this functional  is bounded on $E$, i.e., 
\begin{equation}\label{eq-sqf}
 \Big\|  \Big( \int_0^\infty | F(tA) g |^2\,  \frac{dt}{t} \Big)^{1/2} \Big\|_p \le C_F \| g \|_p.
 \end{equation}
We refer to \cite{CDMY} and \cite{LeMerdy}. 

Now let $L = \Delta +V$ be a Schr\"odinger operator with a non-negative $V \in L_{loc}^1(M)$. Since the semigroup $(e^{-tL})$ is sub-Markovian, $L$ has a bounded holomorphic functional calculus on $L^p(M)$ for all $p \in (1, \infty)$. This was proved by many authors  and the result had successive improvements during several decades. The most recent and general result in this direction states that  
$L$ has a bounded holomorphic functional calculus with angle $\omega_p =  \arcsin |\frac{2}{p} -1| + \epsilon$ (for any $\epsilon > 0$). We refer to  \cite{Carbonaro} for the precise statement. In particular, one has the square function estimate \eqref{eq-sqf} for 
$F \in H_0^\infty(\Sigma(\omega_p))$. \\
A well known duality argument which can be found in  \cite[p. 85]{Stein2}  shows that the reverse inequality holds on $L^q(M)$, that is for every $q \in (1, \infty)$ and $F$ as above
\begin{equation}\label{eq-sqf-rev}
 C'_F \| g \|_q  \le \Big\|  \Big( \int_0^\infty | F(tA) g |^2\,  \frac{dt}{t} \Big)^{1/2} \Big\|_q.
 \end{equation}

Recall that a subset ${\mathcal T}$ of ${\mathcal L}(L^p(M))$ is said  ${\mathcal R}$-bounded if there exists a constant $C > 0$ such that 
for every collection $T_1,.., T_n \in {\mathcal T}$ and every $f_1,..., f_n \in L^p(M)$
\begin{equation}\label{r-bb}
\EE \Big\| \sum_{k=1}^n {\gothic r}_k T_k f_k \Big\|_p \le C\, \EE \Big\| \sum_{k=1}^n {\gothic r}_k f_k \Big\|_p.
\end{equation}
Here, $({\gothic r}_k)_k$ is a sequence of independent Rademacher variables and $\EE$ is the usual expectation.  By the Kahane-Khintchine inequality, this definition can be reformulated as follows 
\begin{equation}\label{r-bb-2}
\Big\|  \Big( \sum_{k=1}^n | T_k f_k|^2 \Big)^{1/2}  \Big\|_p \le C\,  \Big\| \Big( \sum_{k=1}^n |f_k|^2 \Big)^{1/2} \Big\|_p.
\end{equation}
The notion of ${\mathcal R}$-bounded operators plays a very important role in many questions in functional analysis (cf. \cite{HytonenII}) as well as in the theory of maximal regularity for evolution equations (see \cite{Weis} or \cite{KW}).

For $L = \Delta + V$ and $\Gamma = \nabla$ or multiplication by $\sqrt{V}$, we shall use  the property that  the set 
$\{  \sqrt{t}\,  \Gamma e^{-tL}, \  \  t > 0 \}$ is  ${\mathcal R}$-bounded on $L^p(M)$. 
If $\Gamma = \nabla$,  then $\nabla e^{-tL} f(x) \in T_xM$ and hence 
$ | \nabla e^{-tL} f(x) | = | \nabla e^{-tL} f(x)) |_x$. This dependence of the norm on the point $x$ does not affect the proof that  \eqref{r-bb}, for $T_k = \sqrt{t_k} \nabla e^{-t_k L}$,  is equivalent (by the Kahane inequality) to  \eqref{r-bb-2} with $| . | = |. |_x$ in the LHS term.

\begin{proposition}\label{riesz-rbb} Let  $p \in (1, \infty)$ and suppose that the Riesz transform
$\Gamma L^{-1/2}$ is bounded on $L^p(M)$. Then the set $\{  \sqrt{t}\,  \Gamma e^{-tL}, \  \  t > 0 \}$ is ${\mathcal R}$-bounded on $L^p(M)$.
\end{proposition}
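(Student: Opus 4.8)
The plan is to write $\sqrt t\,\Gamma e^{-tL}$ as the composition of the bounded Riesz transform $\Gamma L^{-1/2}$ with the operator $\sqrt t\, L^{1/2} e^{-tL}$, and then to deduce $\mathcal{R}$-boundedness of the family $\{\sqrt t\,\Gamma e^{-tL}: t>0\}$ from $\mathcal{R}$-boundedness of the family $\{\sqrt t\, L^{1/2} e^{-tL}: t>0\}$ on $L^p(M)$. Since a single bounded operator is trivially $\mathcal{R}$-bounded and the composition of an $\mathcal{R}$-bounded family with a fixed bounded operator (on either side) is again $\mathcal{R}$-bounded — this is immediate from the definition \eqref{r-bb}, simply inserting $\Gamma L^{-1/2}$ and using its boundedness — the whole statement reduces to showing that $\{\sqrt t\, L^{1/2} e^{-tL}: t>0\}$ is $\mathcal{R}$-bounded on $L^p(M)$.

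The key step is therefore the $\mathcal{R}$-boundedness of $\{\sqrt t\, L^{1/2} e^{-tL}: t>0\}$, and here I would invoke the holomorphic functional calculus. Set $\psi(z) := z^{1/2} e^{-z}$; then $\psi \in H_0^\infty(\Sigma(\omega))$ for any $\omega < \pi/2$, since $|\psi(z)| \le C\,|z|^{1/2}/(1+|z|)$ on such a sector (it decays exponentially at infinity and like $|z|^{1/2}$ at the origin). Now $\sqrt t\, L^{1/2} e^{-tL} = \psi(tL)$. Since $L$ has a bounded holomorphic functional calculus on $L^p(M)$ with angle $\omega_p < \pi/2$ (recalled in Section~\ref{sec1}), the result of N.~J.~Kalton and L.~Weis \cite{KW} applies: the family $\{\phi(tL): t>0\}$ is $\mathcal{R}$-bounded on $L^p(M)$ for any single $\phi \in H_0^\infty(\Sigma(\omega_p))$, with $\mathcal{R}$-bound controlled by $\|\phi\|_{H^\infty(\Sigma(\omega_p))}$. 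Taking $\phi = \psi$ gives exactly what is needed. Alternatively, one can avoid quoting \cite{KW} in this form and argue directly: writing $\psi(tL)f = \int_0^\infty \eta(st) L e^{-sL} f\,\frac{ds}{s}$ for a suitable auxiliary function $\eta$, one expresses each $\psi(t_k L)$ as an average (integral) of the operators $sL e^{-sL}$ against a fixed kernel, and then the convexity property of $\mathcal{R}$-bounds together with the $\mathcal{R}$-boundedness of the analytic semigroup generators $\{sL e^{-sL}: s>0\}$ (a standard consequence of bounded holomorphic functional calculus) yields the claim; but the cleanest route is simply to cite \cite{KW}.

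Finally, assembling the two pieces: for $t_k > 0$ and $f_k \in L^p(M)$,
\[
\EE\left\| \sum_{k=1}^n \mathfrak{r}_k \sqrt{t_k}\,\Gamma e^{-t_k L} f_k \right\|_p
= \EE\left\| \sum_{k=1}^n \mathfrak{r}_k\, \Gamma L^{-1/2}\bigl(\psi(t_k L) f_k\bigr) \right\|_p
\le \|\Gamma L^{-1/2}\|_{p\to p}\,\EE\left\| \sum_{k=1}^n \mathfrak{r}_k\, \psi(t_k L) f_k \right\|_p
\le C\,\EE\left\| \sum_{k=1}^n \mathfrak{r}_k f_k \right\|_p,
\]
which is \eqref{r-bb} for the family in question. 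One small technical point deserving care is the justification of the identity $\sqrt t\,\Gamma e^{-tL} = \Gamma L^{-1/2}\,\psi(tL)$ on a dense subspace (e.g.\ the range of $L^{1/2}$, or on $L^2 \cap L^p$): this is just the operator identity $e^{-tL} = L^{-1/2}\cdot t^{-1/2}\psi(tL)$ composed with $\Gamma$, valid because $\psi(tL) = (tL)^{1/2} e^{-tL}$ maps into the domain of $L^{1/2}$ and hence into a space on which $\Gamma L^{-1/2}$ is already defined, and then one passes to the bounded extension. I expect this bookkeeping about domains — rather than any substantive estimate — to be the only real obstacle; the analytic heart of the argument is the one-line observation that $z^{1/2}e^{-z} \in H_0^\infty$, after which \cite{KW} does the work.
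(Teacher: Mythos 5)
Your proposal is correct and follows essentially the same route as the paper: factor $\sqrt{t}\,\Gamma e^{-tL}=\Gamma L^{-1/2}\,(tL)^{1/2}e^{-tL}$, pull out the bounded Riesz transform, and apply the Kalton--Weis theorem to the uniformly $H^\infty$-bounded family $z\mapsto (tz)^{1/2}e^{-tz}$ to get ${\mathcal R}$-boundedness of $\{(tL)^{1/2}e^{-tL},\ t>0\}$. The paper's proof is exactly this argument, written with $\phi_k(z)=\sqrt{t_k z}\,e^{-t_k z}$.
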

\begin{proof} Let $T_k := \sqrt{t_k} \Gamma e^{-t_k L}$ for $t_k > 0$ and $f_k \in L^p(M)$ for $k = 1,...,n$. We have
\begin{eqnarray*}
\EE \Big\| \sum_{k=1}^n {\gothic r}_k T_k f_k \Big\|_p &=& \EE  \Big\| \Gamma L^{-1/2} \sum_{k=1}^n {\gothic r}_k (t_k L)^{1/2} e^{-t_k L} f_k \Big\|_p\\
&\le& C\, \EE  \Big\|  \sum_{k=1}^n {\gothic r}_k (t_k L)^{1/2} e^{-t_k L} f_k \Big\|_p.
\end{eqnarray*}
Let $\phi_k(z) := \sqrt{t_k z} e^{-t_k z}$ and observe that  the sequence $(\phi_k)_k$ is uniformly bounded in $H^\infty(\Sigma(\omega_p))$. 
As we mentioned above, the operator $L$ has bounded holomorphic functional calculus on $L^p(M)$ with angle $\omega_p$. Therefore, 
by  \cite{KW} or \cite[Theorem~10.3.4]{HytonenII}, the set $\{ \phi_k(L), \, k \ge 1 \}$ is ${\mathcal R}$-bounded on $L^p(M)$. Using this in the previous inequality yields  \eqref{r-bb}.
\end{proof}

It is useful to notice that the  ${\mathcal R}$-boundedness of $\sqrt{t}\, \Gamma\, e^{-tL}$ can be reformulated in terms of the resolvent. More precisely,
\begin{proposition}\label{r-bb-resol}
Let $\delta' > \frac{1}{2}$. Then  the following assertions are equivalent\\
i) the set $\{\sqrt{t}\,  \Gamma\, e^{-tL}, \ t > 0  \}$ is ${\mathcal R}$-bounded on $L^p(M)$,\\
ii) the set $\{\sqrt{t}\,  \Gamma\, (I + tL)^{-\delta'}, \ t > 0  \}$ is ${\mathcal R}$-bounded on $L^p(M)$.
\end{proposition}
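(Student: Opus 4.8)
The plan is to prove the equivalence by exploiting the subordination-type formulas that connect the heat semigroup $e^{-tL}$ to the resolvent powers $(I+tL)^{-\delta'}$, together with the stability of $\mathcal R$-boundedness under integral averages against absolutely summable kernels (the ``$\mathcal R$-boundedness of integral means'' principle, cf. \cite{KW,HytonenII}). First I would fix $\delta' > \tfrac12$ and recall that for each fixed $t>0$,
\[
(I+tL)^{-\delta'} = \frac{1}{\Gamma(\delta')} \int_0^\infty s^{\delta'-1} e^{-s} e^{-stL}\, ds,
\]
so that $\sqrt{t}\,\Gamma (I+tL)^{-\delta'} = \frac{1}{\Gamma(\delta')}\int_0^\infty s^{\delta'-1}e^{-s}\, \big(\sqrt{t}\,\Gamma e^{-stL}\big)\, ds$. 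Writing $\sqrt{t}\,\Gamma e^{-stL} = s^{-1/2}\big(\sqrt{st}\,\Gamma e^{-(st)L}\big)$, the operator inside the integral is, up to the scalar $s^{-1/2}$, a member of the family $\{\sqrt{\tau}\,\Gamma e^{-\tau L}:\tau>0\}$. Since $\int_0^\infty s^{\delta'-1}e^{-s}s^{-1/2}\,ds = \Gamma(\delta'-\tfrac12) < \infty$ precisely because $\delta' > \tfrac12$, the direction (i) $\Rightarrow$ (ii) follows from the fact that an average of an $\mathcal R$-bounded family against a finite positive measure is again $\mathcal R$-bounded, with $\mathcal R$-bound controlled by the total mass times the original $\mathcal R$-bound (this is where the hypothesis $\delta'>\tfrac12$ is used; it is the natural scaling threshold making the gradient factor integrable at $s=0$).

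For the converse (ii) $\Rightarrow$ (i), I would go in the opposite direction, writing $e^{-tL}$ as an integral average of resolvent powers $(I+stL)^{-\delta'}$. One clean way is to use the (inverse Laplace / Mellin) representation expressing $e^{-z}$ as a superposition $\int_0^\infty (1+uz)^{-\delta'} d\nu(u)$ for a suitable finite signed (in fact one can arrange finite positive after an extra smoothing) measure $\nu$ depending only on $\delta'$; alternatively, iterate the resolvent identity and use that $(I+tL)^{-\delta'} \to I$ appropriately. Then $\sqrt t\,\Gamma e^{-tL} = \int_0^\infty \sqrt t\,\Gamma (I+utL)^{-\delta'} d\nu(u)$, and again $\sqrt t\,\Gamma(I+utL)^{-\delta'} = u^{-1/2}\big(\sqrt{ut}\,\Gamma(I+(ut)L)^{-\delta'}\big)$ belongs to $u^{-1/2}$ times the family in (ii); provided the measure $\nu$ has $\int u^{-1/2}\,d|\nu|(u) < \infty$, the same averaging principle yields (i). I would double-check the integrability of the representing measure near $u=0$ and $u=\infty$ — this is the one genuinely technical point — and if the most naive kernel fails the $u^{-1/2}$ moment condition I would instead first pass through an intermediate family such as $\{\sqrt t\,\Gamma(I+tL)^{-m}: t>0\}$ for integer $m$ large (handled by iterating the resolvent identity, which only involves compositions with the uniformly bounded, hence trivially averaged, operators $(I+tL)^{-1}$ and the base case $\delta'$), and then interpolate/average between integer powers and the given $\delta'$.

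The main obstacle I expect is not conceptual but bookkeeping: producing, for the reverse implication, an explicit and \emph{integrable} kernel $\nu$ realizing $e^{-tL}$ as an average of the $(I+stL)^{-\delta'}$ with the correct $s^{-1/2}$-moment finite, and making sure all manipulations (Fubini, the Cauchy-type integrals defining the operators, the Rademacher sums) are justified on $L^p(M)$ with the pointwise norm $|\cdot|_x$ in the target bundle — but as noted in the text this last point does not affect the argument since $\mathcal R$-boundedness is insensitive to the fibrewise norm via the Kahane reformulation \eqref{r-bb-2}. A secondary subtlety is that one should only claim the equivalence for a \emph{fixed} exponent $\delta' > \tfrac12$; different admissible $\delta'$ all give the same property, and one can see this either by the same averaging trick between two resolvent powers or by transitivity through (i).
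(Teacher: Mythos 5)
Your forward direction (i) $\Rightarrow$ (ii) coincides with the paper's argument: write $(I+tL)^{-\delta'}$ via the Laplace transform $c_{\delta'}\int_0^\infty s^{\delta'-1}e^{-s}e^{-stL}\,ds$, rescale to exhibit an average of the family $\{\sqrt\tau\,\Gamma e^{-\tau L}\}$ against a positive kernel of finite total mass (finite precisely when $\delta'>\tfrac12$), and invoke the stability of $\mathcal R$-boundedness under such integral averages (the paper cites Lemma~3.2 of \cite{dePagter}; \cite{KW,HytonenII} give the same principle). That part is fine.

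The converse is where your route diverges and where there is a genuine gap. You propose to represent $e^{-z}=\int_0^\infty(1+uz)^{-\delta'}d\nu(u)$ for a finite measure $\nu$ and then average the resolvent family against $\nu$. But such a representation with a finite (even signed) measure is not available: taking Mellin transforms in $z$, the required $\nu$ would have Mellin transform $\Gamma(\delta')/\Gamma(\delta'-s)$, which does not decay on vertical lines (it grows), so the inverse is a distribution rather than a finite measure, and the needed $u^{-1/2}$-moment has no chance of being finite. Your suggested fallbacks (iterating the resolvent identity to pass to integer powers, then ``interpolating'') are too vague to close this, and in any case they head in the wrong direction: they concern relating $(I+tL)^{-\delta'}$ to $(I+tL)^{-m}$, not recovering the semigroup. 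The paper sidesteps the whole issue by \emph{factoring} rather than averaging. It writes
\[
\sqrt t\,\Gamma e^{-tL}=\bigl[\sqrt t\,\Gamma (I+tL)^{-\delta'}\bigr]\circ\phi_t(L),\qquad \phi_t(z):=(1+tz)^{\delta'}e^{-tz},
\]
observes that $(\phi_t)_{t>0}$ is uniformly bounded in $H^\infty(\Sigma(\omega_p))$, hence $\{\phi_t(L):t>0\}$ is $\mathcal R$-bounded by the Kalton--Weis theorem (the bounded $H^\infty$ calculus of $L$ on $L^p$ is already in place), and then uses that the pointwise product of two $\mathcal R$-bounded families is $\mathcal R$-bounded. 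This is the idea your proposal is missing: no subordination formula is needed for (ii) $\Rightarrow$ (i), only the $\mathcal R$-boundedness of the compensating spectral multiplier $\phi_t(L)$, which comes for free from the functional calculus. I would replace your converse argument with this factorization; it is both shorter and actually correct.
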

\begin{proof}
Suppose that $\{\sqrt{t}\,  \Gamma\, e^{-tL}, \ t > 0  \}$ is ${\mathcal R}$-bounded on $L^p(M)$ and let $\delta' > \frac{1}{2}$. By the Laplace transform
\begin{eqnarray*}
 \sqrt{t}\,  \Gamma\, (I + tL)^{-\delta'} &=& c_{\delta'} \sqrt{t} \int_0^\infty s^{\delta'-1} e^{-s} \Gamma\, e^{-stL} \,ds\\
 &=& c_{\delta'} \int_0^\infty a_t(s) \sqrt{s}\,  \Gamma\, e^{-sL} \, ds
 \end{eqnarray*}
 with $a_t(s) := t^{\frac{1}{2}-\delta'} s^{\delta'- \frac{3}{2}} e^{-s/t}$. Since $\delta' > \frac{1}{2}$ we have $\int_0^\infty a_t(s) ds = c'_{\delta'}$.  We can then apply  \cite[Lemma 3.2]{dePagter} to conclude that the set in $ii)$ is ${\mathcal R}$-bounded.\\
 Suppose now that $ii)$ is satisfied with some $\delta' > \frac{1}{2}$. Define for each $t > 0$, $\phi_t(z) := (1+ tz)^{\delta'}e^{-tz}$. Then $(\phi_t)_t$ is uniformly bounded in  $H^\infty(\Sigma(\omega_p))$. Hence, $\{ \phi_t(L), \ t > 0 \}$ is 
 ${\mathcal R}$-bounded. Taking the product of the ${\mathcal R}$-bounded operators $\sqrt{t}\, \Gamma\, (1+ tL)^{-\delta'}$ and 
 $\phi_t(L)$ gives assertion $i)$. 
\end{proof}

We finish this section by the following lemma. 
\begin{lemma}\label{lem1-0}
Let $I$ be an interval of $\RR$ and suppose that for each $t \in I$, $S_t$ is a bounded operator on $L^p(M)$ (with values in $L^p(M)$ or in $L^p(M, TM)$). Then the set $\{ S_t, \ t \in I \}$ is 
${\mathcal R}$-bounded on $L^p(M)$ {\it if and only if} there exists a constant $C > 0$ such that
$$\Big\| \Big( \int_I  | S_t u(t) |^2 \, dt  \Big)^{1/2} \Big\|_p \le C\, \Big\| \Big( \int_I  | u(t) |^2 \, dt  \Big)^{1/2} \Big\|_p$$
for all $u \in L^p(M, L^2(I))$.
\end{lemma}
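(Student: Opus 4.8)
The plan is to establish the equivalence in Lemma~\ref{lem1-0} by the standard identification of $\mathcal{R}$-boundedness of a family indexed by a measure space with boundedness of an associated averaging/integral operator between vector-valued $L^p$-spaces, combined with the Kahane--Khintchine reformulation \eqref{r-bb-2} of $\mathcal{R}$-boundedness. The key point is that the square-function norm $\big\|(\int_I |u(t)|^2\,dt)^{1/2}\big\|_p$ is, up to constants, the norm of $u$ in $L^p(M,L^2(I))$, and that such Hilbert-space-valued norms are approximated by finite Rademacher sums via Kahane's inequality.

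First I would prove the "only if" direction. Assume $\{S_t : t\in I\}$ is $\mathcal{R}$-bounded with constant $C$. Fix $u\in L^p(M,L^2(I))$; by density it suffices to treat $u$ of the form $u(t)=\sum_{k=1}^n \mathbf{1}_{I_k}(t)\, u_k$ where the $I_k$ are disjoint subintervals of $I$ of length $|I_k|$ and $u_k\in L^p(M)$, or more generally simple functions, and then pass to the limit. For such $u$, choosing sample points $t_k\in I_k$ and using that $S_t$ varies (one can first reduce to step functions whose value of $S_t$ is frozen, or simply invoke strong measurability/continuity in $t$ to replace $S_t$ on $I_k$ by $S_{t_k}$ up to an arbitrarily small error after refining the partition), the left-hand side becomes essentially $\big\|(\sum_k |I_k|\,|S_{t_k} u_k|^2)^{1/2}\big\|_p$. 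Absorbing $\sqrt{|I_k|}$ into $u_k$ and applying \eqref{r-bb-2} bounds this by $C\big\|(\sum_k |I_k|\,|u_k|^2)^{1/2}\big\|_p = C\big\|(\int_I |u(t)|^2\,dt)^{1/2}\big\|_p$. Taking the supremum over partitions and using a monotone/dominated convergence argument gives the claimed inequality for all $u\in L^p(M,L^2(I))$.

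For the "if" direction, suppose the integral inequality holds. Given $t_1,\dots,t_n\in I$ and $f_1,\dots,f_n\in L^p(M)$, I would manufacture a function $u\in L^p(M,L^2(I))$ that "sees" only these finitely many points: pick disjoint intervals $J_k\subset I$ with $t_k\in J_k$ and set $u(t)=\sum_k \mathbf{1}_{J_k}(t)\,|J_k|^{-1/2} f_k$, so that $\big\|(\int_I |u(t)|^2\,dt)^{1/2}\big\|_p=\big\|(\sum_k |f_k|^2)^{1/2}\big\|_p$. Then $\big\|(\int_I |S_t u(t)|^2\,dt)^{1/2}\big\|_p$ is, again after freezing $S_t$ at $t_k$ on $J_k$ (at the cost of a negligible error by shrinking $J_k$ and using strong continuity of $t\mapsto S_t$), comparable to $\big\|(\sum_k |S_{t_k} f_k|^2)^{1/2}\big\|_p$. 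The hypothesis then yields $\big\|(\sum_k |S_{t_k} f_k|^2)^{1/2}\big\|_p\le C'\big\|(\sum_k |f_k|^2)^{1/2}\big\|_p$, which is exactly \eqref{r-bb-2}, i.e.\ $\mathcal{R}$-boundedness.

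The main obstacle is purely technical: justifying the replacement of the continuum family $\{S_t\}$ by a finite sample $\{S_{t_k}\}$, i.e.\ the measurability/continuity issues needed to pass between the integral over $I$ and finite sums. One handles this cleanly by noting that $t\mapsto S_t$ is strongly continuous (for $S_t=\sqrt{t}\,\Gamma e^{-tL}$ or the resolvent variants this follows from analyticity of the semigroup), so that on a fine enough partition $S_t$ is uniformly close to $S_{t_k}$ in the strong operator topology on the relevant finite set of vectors, and the errors are controlled in $L^p(M,L^2(I))$-norm; alternatively one argues directly with step functions, which are dense in $L^p(M,L^2(I))$, and invokes the $\mathcal{R}$-boundedness/integral inequality only for such functions before passing to the limit. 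I should also note the harmless point already flagged before the lemma: when $\Gamma=\nabla$ the pointwise norm $|S_tu(t)(x)|$ is the norm in $T_xM$, and this pointwise-in-$x$ dependence does not interfere with any of the above manipulations, since Kahane's inequality and the square-function identities are applied fiberwise.
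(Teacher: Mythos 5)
Your proof reconstructs the argument of Weis's Lemma~4.a, which is precisely what the paper invokes: the paper cites that lemma and merely remarks that the tangent-bundle-valued case follows either by repeating Weis's argument or by projecting fiberwise onto an orthonormal basis of $T_xM$, and your step-function/sample-point discretization together with your closing remark about applying the estimates fiberwise is exactly this. The only polish worth mentioning is that Weis's version first treats $t\mapsto S_t$ taking finitely many values (where the passage from \eqref{r-bb-2} to the integral inequality is exact, with no Riemann-sum error) and then passes to strongly measurable families by Fatou, which sidesteps the uniform-continuity hypothesis you lean on; but since in every application here $t\mapsto S_t$ is analytic, your route is equally valid.
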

This lemma is proved in  \cite{Weis} (see 4.a) in the case where  $S_t: L^p(M) \to L^p(M)$ for each $t>0$. Here $M$ is any $\sigma$-finite measured space.  In our case,  these  operators may take values in 
$L^p(M, TM)$ as in the case of $S_t = \sqrt{t} \nabla e^{-tL}$.   Here,  $|S_t u(t,x)|$ is actually $|S_t u(t,x)|_x$ where  $|.|_x$ is again the norm in the tangent space  $T_xM$ at the point $x$. For the proof one can either repeat the argument in  \cite{Weis} or argue by taking projection on each $e_j$ where $\{e_1,...,e_m \}$ is an orthonormal basis of $T_xM$. 

\section{Littlewood-Paley-Stein inequalities and ${\mathcal R}$-boundedness}

Let $L = \Delta + V$ with $0 \le V \in L^1_{loc}(M)$.  We use again the notation $\Gamma$ for the gradient $\nabla$ or the multiplication by $\sqrt{V}$. 

\begin{theorem}\label{prop-LPSR} Let $H_\Gamma(f) =  \Big( \int_0^\infty | \Gamma e^{-tL} f |^2\, dt \Big)^{1/2}$ and $p \in (1,\infty)$. Then $H_\Gamma$ is bounded on $L^p(M)$ if and only if the set $\{ \sqrt{t}\, \Gamma e^{-tL}, t > 0 \}$ is 
${\mathcal R}$-bounded on $L^p(M)$. 
\end{theorem}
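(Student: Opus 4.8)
The plan is to connect the Littlewood-Paley-Stein functional with $\mathcal{R}$-boundedness via Lemma~\ref{lem1-0}, which gives a characterization of $\mathcal{R}$-boundedness of a family $\{S_t\}$ in terms of a square-function estimate on $L^p(M,L^2(I))$. The two implications will be handled quite differently.

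For the direction ``$\mathcal{R}$-bounded $\Rightarrow$ $H_\Gamma$ bounded'', I would first write, for a fixed $g \in L^p(M)$ and $t>0$,
\[
\sqrt{t}\,\Gamma e^{-tL} g = \left(\sqrt{t}\,\Gamma e^{-\frac{t}{2}L}\right) e^{-\frac{t}{2}L} g,
\]
so that $|\Gamma e^{-tL}g|^2 = \tfrac{1}{t}\big|\,(\sqrt{t}\,\Gamma e^{-\frac{t}{2}L})\,u(t)\,\big|^2$ with $u(t) := e^{-\frac{t}{2}L}g$. Applying Lemma~\ref{lem1-0} with $S_t := \sqrt{t}\,\Gamma e^{-\frac{t}{2}L}$ (which is $\mathcal{R}$-bounded because $\{\sqrt{t}\,\Gamma e^{-tL}\}$ is, up to the harmless rescaling $t \mapsto t/2$ and composition with the contraction $e^{-\frac{t}{2}L}$), one gets
\[
\| H_\Gamma(g) \|_p = \left\| \left( \int_0^\infty \tfrac{1}{t}\, | S_t u(t) |^2\, dt \right)^{1/2} \right\|_p
\le C\, \left\| \left( \int_0^\infty \tfrac{1}{t}\, | e^{-\frac{t}{2}L} g |^2\, dt \right)^{1/2} \right\|_p.
\]
The remaining factor is precisely a horizontal square function of the form \eqref{eq-sqf} with $F(z) = e^{-z/2} \in H_0^\infty(\Sigma(\omega_p))$ (after the change of variable absorbing $dt/t$), which is bounded on $L^p(M)$ because $L$ has a bounded holomorphic functional calculus. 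This yields $\|H_\Gamma(g)\|_p \le C\|g\|_p$.

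For the converse ``$H_\Gamma$ bounded $\Rightarrow$ $\mathcal{R}$-bounded'', I would again use Lemma~\ref{lem1-0}: it suffices to show that for $u \in L^p(M,L^2((0,\infty)))$,
\[
\left\| \left( \int_0^\infty | \sqrt{t}\,\Gamma e^{-tL} u(t) |^2\, \tfrac{dt}{t} \right)^{1/2} \right\|_p \le C\, \left\| \left( \int_0^\infty | u(t) |^2\, \tfrac{dt}{t} \right)^{1/2} \right\|_p
\]
(the measure $dt$ versus $dt/t$ is cosmetic and can be normalized by rescaling the family). The idea is to reproduce a truncation/averaging trick: write $\Gamma e^{-tL}u(t)$ by inserting the semigroup again, $\Gamma e^{-tL} = \Gamma e^{-(t-s)L} \cdot e^{-sL}$ for $0<s<t$, average over $s \in (t/2, t)$, and use that $\Gamma e^{-(t-s)L}$ applied to the appropriate heat-regularized function is controlled, for a.e.\ fixed $s$, by the pointwise functional $H_\Gamma$ of $e^{-sL}u(\cdot)$-type quantities, together with Fubini and Minkowski's integral inequality in $L^2$. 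The square function estimate \eqref{eq-sqf} (and its dual \eqref{eq-sqf-rev}) for the purely horizontal pieces $e^{-sL}$ then absorbs the integration in $s$, leaving $\|H_\Gamma(\text{something built from }u)\|_p$, which by hypothesis is $\le C \|(\int |u(t)|^2 dt/t)^{1/2}\|_p$.

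The main obstacle is the converse direction: one must pass from the \emph{diagonal} estimate ``$H_\Gamma(g)$ bounded'' (a single function $g$) to the \emph{off-diagonal / vector-valued} estimate in $L^p(M, L^2)$ required by Lemma~\ref{lem1-0}, i.e.\ from $t\mapsto \Gamma e^{-tL}g$ to $t \mapsto \Gamma e^{-tL}u(t)$ with a genuinely $t$-dependent input. The semigroup law $e^{-tL} = e^{-(t-s)L}e^{-sL}$ and the analyticity/contractivity of $(e^{-tL})$ on $L^p$ are the tools that decouple the ``$\Gamma$-part'' (to which $H_\Gamma$ applies) from the ``horizontal part'' (to which the functional calculus square function applies); making this decoupling quantitative, with constants independent of $u$, is the technical heart of the argument. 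One should also double-check that the tangent-bundle-valued nature of $\Gamma e^{-tL}$ when $\Gamma = \nabla$ causes no difficulty — as noted after Lemma~\ref{lem1-0}, it does not, since one may work fibrewise in an orthonormal frame of $T_xM$.
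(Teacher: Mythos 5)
There is a genuine gap in both directions.

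\textbf{Forward direction (``$\mathcal{R}$-bounded $\Rightarrow H_\Gamma$ bounded'').} Your factorization $\sqrt{t}\,\Gamma e^{-tL}g = \bigl(\sqrt{t}\,\Gamma e^{-\frac{t}{2}L}\bigr)\,e^{-\frac{t}{2}L}g$ is harmless, but after applying Lemma~\ref{lem1-0} you are left with
\[
\left\| \left( \int_0^\infty \bigl| e^{-\frac{t}{2}L}g \bigr|^2 \,\frac{dt}{t} \right)^{1/2} \right\|_p ,
\]
and you assert that this is a bounded square function because $F(z)=e^{-z/2}\in H_0^\infty(\Sigma(\omega_p))$. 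That is false: $H_0^\infty$ requires $|F(z)|\le C|z|^s/(1+|z|^{2s})$, in particular $F(0)=0$, and $e^{-z/2}$ equals $1$ at the origin. Concretely, $\int_0^\infty |e^{-tL}g(x)|^2\,\frac{dt}{t}$ diverges (near $t=0$, $e^{-tL}g\approx g$) for a.e.\ $x$ whenever $g\neq 0$, so the right-hand side is $+\infty$ and the bound is vacuous. The missing idea is an \emph{integration by parts in $t$} prior to any factorization: as in the paper's Theorem~\ref{thm-hol}, one replaces $\int_0^\infty |\Gamma F(tL)f|^2\,dt$ by $4\int_0^\infty |\Gamma\,G(tL)f|^2\,dt$ with $G(z)=zF'(z)$, which does vanish at $0$. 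Only then does the factorization $\Gamma G(tL)=\bigl(\sqrt{t}\,\Gamma(I+tL)^{-\delta'}\bigr)\cdot\frac{1}{\sqrt t}(I+tL)^{\delta'}G(tL)$ combined with Lemma~\ref{lem1-0} and the square function estimate \eqref{eq-sqf} close the argument. Without this step, your approach cannot produce a finite right-hand side.

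\textbf{Converse direction (``$H_\Gamma$ bounded $\Rightarrow$ $\mathcal{R}$-bounded'').} You correctly identify the obstacle — passing from the ``diagonal'' estimate on $t\mapsto\Gamma e^{-tL}g$ to the ``off-diagonal'' $t\mapsto \Gamma e^{-tL}u(t)$ needed by Lemma~\ref{lem1-0} — but you do not resolve it; averaging $\Gamma e^{-(t-s)L}e^{-sL}u(t)$ over $s$ still leaves an input depending jointly on $s$ and $t$, and it is not clear how the $H_\Gamma$ hypothesis, which accepts a single scalar input, absorbs this. The paper's proof bypasses Lemma~\ref{lem1-0} entirely for this implication: it works directly with the Rademacher definition \eqref{r-bb}, sets $I:=\EE\bigl|\sum_k \gothic{r}_k\sqrt{t_k}\,\Gamma e^{-t_k L}f_k\bigr|^2$, writes $I=-\int_0^\infty \frac{d}{dt}\EE|\Gamma e^{-tL}(\cdots)|^2\,dt$, and exploits the independence of the $\gothic{r}_k$ (twice) together with Cauchy--Schwarz to bound $I$ by $\EE\bigl[H_\Gamma(\sum_k\gothic{r}_k e^{-t_k L}f_k)^2\bigr]+\EE\bigl[H_\Gamma(\sum_k\gothic{r}_k (t_k L)e^{-t_k L}f_k)^2\bigr]$. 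Then the Kahane inequality, the assumed $L^p$-boundedness of $H_\Gamma$, and the $\mathcal{R}$-boundedness of $\{e^{-tL}\}$ and $\{tLe^{-tL}\}$ (a consequence of the $H^\infty$ calculus and \cite{KW}) finish. Note that, once again, the central computational tool is an integration by parts in $t$, which your sketch does not use. I recommend studying that calculation carefully: the integration-by-parts step is the unifying mechanism in both directions of this theorem.
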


\begin{proof} 

Suppose that  $H_\Gamma$ is bounded on $L^p(M)$.  We prove that $\{ \sqrt{t}\, \Gamma e^{-tL}, t > 0 \}$ is ${\mathcal R}$-bounded on $L^p(M)$. For the converse we shall prove a more  general result in the next section and hence we do not give the details here in order to avoid repetition.\\
Let $t_k \in (0, \infty)$ and $f_k  \in L^p(M)$ for $k = 1,...,N$.  We start by estimating the quantity 
$ I := \EE \left|  \sum_k  {\gothic r}_k \sqrt{t_k}\, \Gamma\,  e^{-t_k L} f_k \right|^2$. Using (twice) the independence of the Rademacher variables we have 
\begin{eqnarray*}
I &=& - \int_0^\infty \frac{d}{dt} \EE | \Gamma e^{-tL} \sum_k  {\gothic r}_k \sqrt{t_k} e^{-t_k L} f_k |^2\, dt\\
&=& 2 \int_0^\infty \EE \left[ (\Gamma e^{-tL} \sum_k  {\gothic r}_k \sqrt{t_k}  e^{-t_k L} f_k) \cdot (\Gamma e^{-tL}\sum_k  {\gothic r}_k \sqrt{t_k} L e^{-t_k L} f_k)\right] \, dt\\
&=& 2  \int_0^\infty \EE   \sum_k  \Gamma e^{-tL} {\gothic r}_k \sqrt{t_k} e^{-t_k L} f_k \cdot  \Gamma e^{-tL}   {\gothic r}_k \sqrt{t_k} L e^{-t_k L} f_k \, dt\\
&=& 2 \int_0^\infty \EE  \sum_k  \Gamma e^{-tL} {\gothic r}_k e^{-t_k L} f_k \cdot  \Gamma e^{-tL}   {\gothic r}_k (t_k L) e^{-t_k L} f_k \, dt\\
&=&2  \int_0^\infty \EE \left[ (  \Gamma e^{-tL} \sum_k {\gothic r}_k e^{-t_k L} f_k) \cdot  (\Gamma e^{-tL}  \sum_k {\gothic r}_k (t_k L) e^{-t_k L} f_k )\right] \, dt.
\end{eqnarray*}
Next, by the Cauchy-Schwarz inequality,
\begin{eqnarray*}
I &\le& 2 \int_0^\infty \Big( \EE | \Gamma e^{-tL} \sum_k {\gothic r}_k e^{-t_k L} f_k |^2 \Big)^{1/2} \Big( \EE | \Gamma e^{-tL} \sum_k {\gothic r}_k (t_k L) e^{-t_k L} f_k |^2 \Big)^{1/2}\, dt\\
&\le& \int_0^\infty  \EE | \Gamma e^{-tL} \sum_k {\gothic r}_k e^{-t_k L} f_k |^2  \, dt + \int_0^\infty  \EE | \Gamma e^{-tL} \sum_k {\gothic r}_k (t_k L) e^{-t_k L} f_k |^2  \, dt.
\end{eqnarray*}
Therefore,
\begin{equation}\label{eq10HR}
I \le \EE \left[ \Big(H_\Gamma ( \sum_k {\gothic r}_k e^{-t_k L} f_k)\Big)^2 \right]  + \EE \left[ \Big(H_\Gamma ( \sum_k {\gothic r}_k (t_k L) e^{-t_k L} f_k) \Big)^2 \right].
\end{equation}
In order to continue, we look at $H_\Gamma$ as the norm in $L^2((0, \infty), dt)$ so that 
\[
 \EE \left[ \Big(H_\Gamma ( \sum_k {\gothic r}_k e^{-t_k L} f_k)\Big)^2 \right] = \EE \Big\| \sum_k {\gothic r}_k \Gamma e^{-tL} e^{-t_k L} f_k \Big\|^2_{L^2((0,\infty), dt)}.
 \]
Hence by the Kahane inequality,
\begin{equation}\label{eq10HRp}
c_p \sqrt{I} \le  \left| \EE \left[ \Big(H_\Gamma ( \sum_k {\gothic r}_k e^{-t_k L} f_k)\Big)^p \right] \right|^{1/p}  +  \left| \EE \left[ \Big(H_\Gamma ( \sum_k {\gothic r}_k (t_k L) e^{-t_k L} f_k) \Big)^p \right] \right|^{1/p}
\end{equation}
for some constant $c_p > 0$. Now we use the assumption that $H_\Gamma$ is bounded on $L^p(M)$ and obtain 
\begin{eqnarray*}
\Big\| \sqrt{I} \Big\|_p &\le&  C \Big( \left| \EE \Big\| \sum_k {\gothic r}_k e^{-t_k L} f_k \Big\|_p^p \right|^{1/p}  + 
 \left| \EE \Big\| \sum_k {\gothic r}_k (t_k L) e^{-t_k L} f_k \Big\|_p^p \right|^{1/p} \Big)\\
 &\le& C' \Big( \EE \Big\| \sum_k {\gothic r}_k e^{-t_k L} f_k \Big\|_p + 
 \EE \Big\| \sum_k {\gothic r}_k (t_k L) e^{-t_k L} f_k \Big\|_p \Big)
\end{eqnarray*}
where we used again the Kahane inequality.  On the other hand, it is easy to see by the Kahane inequality that 
$\| \sqrt{I} \|_p$ is equivalent to $\EE \Big\|  \sum_k  {\gothic r}_k \sqrt{t_k}\,  \Gamma\, e^{-t_k L} f_k \Big\|_p$. 
Since the operator $L$ has a bounded holomorphic functional calculus on $L^p(M)$, it follows from \cite{KW} or \cite[Theorem~10.3.4]{HytonenII}  that  $(e^{-tL})_{t > 0} $ and $ (tL e^{-tL})_{t>0} $ are ${\mathcal R}$-bounded on $L^p(M)$. This  and the previous estimates give
\[
\EE \Big\|   \sum_k  {\gothic r}_k \sqrt{t_k}\,  \Gamma\,  e^{-t_k L} f_k \Big\|_p \le C\, \EE \Big\|  \sum_k  {\gothic r}_k  f_k \Big\|_p
\]
with a constant $C$ independent of $t_k$ and $f_k$. This proves that $\{ \sqrt{t}\, \Gamma e^{-tL}, t > 0 \}$ is ${\mathcal R}$-bounded on $L^p(M)$.
 
\end{proof}
We have the following corollary which is valid on  any complete Riemannian manifold $M$. 
\begin{corollary}\label{cor-LPSR} Let $p \in (1, 2]$. Then the  set $\{ \sqrt{t}\, \Gamma e^{-tL}, t > 0 \}$ is 
${\mathcal R}$-bounded on $L^p(M)$. 
\end{corollary}

\begin{proof} As already mentioned in the introduction, $H_\Gamma$ is always bounded on $L^p(M)$ for all $p \in (1,2]$ (cf. \cite{Ouhabaz} for Schr\"odinger operators and $\Gamma = \nabla$ or $\sqrt{V}$ and \cite{CDL-Studia} for $H_\nabla$ and $ L =  \Delta$).  The corollary is then a consequence of  the previous theorem.  
\end{proof}

\begin{remark}\label{rem1}
For $\Gamma = \sqrt{V}$ we have the following alternative  proof for the 
${\mathcal R}$-boundedness of $\{\sqrt{t}\, \sqrt{V}\, e^{-tL}, \ t > 0\}$ on $L^p(M)$ for $p \in (1,2]$. We have 
\begin{eqnarray*}
\int_0^t \sqrt{s}\, \sqrt{V} e^{-sL} |f | ds &\le& \frac{t}{\sqrt{2}} \Big( \int_0^t | \sqrt{V}\, e^{-sL} |f| |^2 ds \Big)^{1/2}\\
&\le&  \frac{t}{\sqrt{2}}  \Big( \int_0^\infty | \sqrt{V}\, e^{-sL} |f| |^2 ds \Big)^{1/2}.
\end{eqnarray*}
It follows from the fact that  $f \mapsto \Big( \int_0^\infty | \sqrt{V}\, e^{-sL} |f| |^2 ds \Big)^{1/2}$ is bounded on $L^p(M)$ for $p \in (1,2]$ that 
\[
\Big\| \sup_{t > 0}  \frac{1}{t} \int_0^t \sqrt{s}\, \sqrt{V}\, e^{-sL} |f | ds \Big\|_p \le C\, \| f \|_p.
\]
From this, the positivity of  $\sqrt{s}\, \sqrt{V}\, e^{-sL}$ and \cite{Weis} (4.c) it follows  that $\{\sqrt{t}\, \sqrt{V}\, e^{-tL}, \ t > 0\}$ is ${\mathcal R}$-bounded.  
\end{remark}

\section{Generalized Littlewood-Paley-Stein functionals}\label{sec2}


In this section we prove  new  Littlewood-Paley-Stein inequalities for 
$L = \Delta + V$. The first inequality  involves the holomorphic functional calculus of $L$ on $L^p(M)$ and the second one spectral multipliers with compactly supported functions.

We have already mentioned and used  that $L$ has a bounded holomorphic functional calculus with angle
$\omega_p \in (\arcsin |\frac{2}{p} -1|, \frac{\pi}{2})$ on $L^p(M)$ for $p \in (1, \infty)$. In particular, $F(L)$ is a bounded operator on $L^p(M)$ for 
$F   \in H^\infty(\Sigma(\omega_p))$. Let again $\Gamma $ be  either $\nabla$ or multiplication by $\sqrt{V}$. Our first result is the following.  

\begin{theorem}\label{thm-hol} 
Let $m_k, F  \in H^\infty(\Sigma(\omega_p))$ for $k=1, 2,...$ and assume that for some $\delta > \frac{1}{2}$ and $\epsilon_0 \in (0,1)$,  $| F(z) | \le \frac{C}{|z|^\delta}$ as $z \to \infty$, $| F'(z) | \le \frac{C}{|z|^{1-\epsilon_0}}$ as $z \to 0$
and $| m_k'(z) | \le \frac{C}{|z|^{1-\epsilon_0}}$ as $z \to 0$. 
Set 
$$ M_0 = \sup_{k} \| m_k \|_{H^\infty(\Sigma(w_p))} \quad {\rm and} \quad  M_1 = \sup_{k} \| z \mapsto z^{1-\epsilon_0} m_k'(z) \|_{H^\infty(\Sigma(w_p))}.$$ 
1) Given $p \in (1, 2]$. 
Then there exists a constant $C_F > 0$, independent of $m_k$,  such that for all $f_k \in L^p(M)$,
\begin{equation}\label{eq2-1}
\Big\|  \Big( \sum_k \int_0^\infty | \Gamma\,  m_k(tL) F(tL) f_k |^2 dt \Big)^{1/2} \Big\|_p \le C_F \Big[ M_0 + M1 \Big]   \Big\| \Big( \sum_k |f_k|^2 \Big)^{1/2} \Big\|_p.
\end{equation}
In particular, the functional 
\[
H^F_\Gamma (f) :=  \Big(  \int_0^\infty | \Gamma\, F(tL) f |^2 dt \Big)^{1/2} 
\]
is bounded on $ L^p(M)$. \\
2) If $p \in (2, \infty)$ we assume in addition that $\{ \sqrt{t}\, \Gamma\, e^{-tL}, \, t > 0 \}$ is  ${\mathcal R}$-bounded on $L^p(M)$. Then the same conclusions as before hold on $L^p(M)$. 
\end{theorem}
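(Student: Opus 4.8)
The plan is to express $\Gamma F(tL)$ as a superposition, over an auxiliary scale $s$, of the operators $\sqrt{s}\,\Gamma(I+sL)^{-\delta'}$ --- whose ${\mathcal R}$-boundedness is supplied by Proposition~\ref{r-bb-resol} (via Corollary~\ref{cor-LPSR} when $p\in(1,2]$, and via the standing hypothesis when $p>2$) --- composed with a function of $L$ lying in $H^\infty_0$, so that the vertical square function becomes controllable by Lemma~\ref{lem1-0} together with the horizontal square function estimate~\eqref{eq-sqf}. First I would remove the $m_k$: since $L$ has a bounded holomorphic functional calculus on $L^p(M)$ and $\sup_k\|m_k\|_{H^\infty(\Sigma(\omega_p))}<\infty$, the Kalton--Weis theorem (\cite{KW}; see also \cite{HytonenII}, Theorem~10.3.4) gives that $\{m_k(L):k\ge1\}$ is ${\mathcal R}$-bounded on $L^p(M)$ with constant $\le C\sup_k\|m_k\|_{H^\infty(\Sigma(\omega_p))}$, hence by~\eqref{r-bb-2}
\[
\Big\|\Big(\sum_k|m_k(L)f_k|^2\Big)^{1/2}\Big\|_p\le C\,\sup_k\|m_k\|_{H^\infty(\Sigma(\omega_p))}\,\Big\|\Big(\sum_k|f_k|^2\Big)^{1/2}\Big\|_p .
\]
Since $m_k(L)$ and $F(tL)$ commute, $\Gamma m_k(L)F(tL)f_k=\Gamma F(tL)\big(m_k(L)f_k\big)$, so it suffices to prove~\eqref{eq2-1} with all $m_k\equiv1$ and then apply it to $(m_k(L)f_k)_k$.

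Next I would fix $\delta'\in(\tfrac12,\delta)$ and an angle $\omega$ with $\arcsin|\tfrac2p-1|<\omega<\omega_p$, so that $L$ has a bounded $H^\infty(\Sigma(\omega))$-calculus and $F\in H^\infty(\Sigma(\omega))$. For $g$ in the dense subspace $\mathrm{Ran}(L^{1/2})$ one has $F(sL)g\to0$ in $L^p(M)$ as $s\to\infty$ (writing $g=L^{1/2}h$ gives $\sqrt s\,F(sL)g=(sL)^{1/2}F(sL)h$, and $z\mapsto z^{1/2}F(z)\in H^\infty_0(\Sigma(\omega))$ thanks to $|F(z)|\le C|z|^{-\delta}$ with $\delta>\tfrac12$), whence $F(tL)g=-\int_t^\infty\frac{d}{ds}F(sL)g\,ds=-\int_t^\infty LF'(sL)g\,ds$. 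Applying the closed operator $\Gamma$ under the integral (all integrals converging absolutely in $L^p$) and inserting $(I+sL)^{-\delta'}(I+sL)^{\delta'}$ produces the key identity
\[
\Gamma F(tL)g=-\int_t^\infty s^{-3/2}\,\big[\sqrt s\,\Gamma(I+sL)^{-\delta'}\big]\,\widetilde\Phi(sL)g\,ds,\qquad \widetilde\Phi(z):=(1+z)^{\delta'}\,zF'(z).
\]
By Cauchy's estimate (using the room between $\omega$ and $\omega_p$), $|F(z)|\le C|z|^{-\delta}$ near $\infty$ forces $|F'(z)|\le C|z|^{-1-\delta}$ near $\infty$, while $|F'(z)|\le C|z|^{-1+\epsilon}$ near $0$ is assumed; hence $\widetilde\Phi$ is holomorphic and bounded on $\Sigma(\omega)$ and vanishes like $|z|^{\epsilon}$ at $0$ and like $|z|^{\delta'-\delta}$ at $\infty$, i.e. $\widetilde\Phi\in H^\infty_0(\Sigma(\omega))$. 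This is the one place where both hypotheses on $F$ enter.

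The rest should be routine. A short Schur test shows that $w\mapsto\int_0^\infty s^{-1/2}\mathbf 1_{\{s>t\}}\,w(s)\,\tfrac{ds}{s}$ is bounded from $L^2\big((0,\infty),\tfrac{ds}{s}\big)$ to $L^2\big((0,\infty),dt\big)$; applying this, for a.e.\ $x$, to $s\mapsto\big(\sqrt s\,\Gamma(I+sL)^{-\delta'}\widetilde\Phi(sL)g\big)(x)$ and using the identity above gives the pointwise bound
\[
\Big(\int_0^\infty|\Gamma F(tL)g(x)|^2\,dt\Big)^{1/2}\le C\Big(\int_0^\infty\big|\sqrt s\,\Gamma(I+sL)^{-\delta'}\widetilde\Phi(sL)g(x)\big|^2\,\tfrac{ds}{s}\Big)^{1/2}.
\]
Summing the squares over $k$ (with $g=g_k$) and taking the $L^p(M)$-norm, I would then invoke Lemma~\ref{lem1-0} for the ${\mathcal R}$-bounded family $\{\sqrt s\,\Gamma(I+sL)^{-\delta'}:s>0\}$ (applied with index set $(0,\infty)\times\NN$) to replace each $\sqrt s\,\Gamma(I+sL)^{-\delta'}\widetilde\Phi(sL)g_k$ by $\widetilde\Phi(sL)g_k$, followed by the horizontal square function estimate~\eqref{eq-sqf} for $\widetilde\Phi\in H^\infty_0(\Sigma(\omega))$ (in its $\ell^2$-valued form), to reach
\[
\Big\|\Big(\sum_k\int_0^\infty|\Gamma F(tL)g_k|^2\,dt\Big)^{1/2}\Big\|_p\le C_F\,\Big\|\Big(\sum_k|g_k|^2\Big)^{1/2}\Big\|_p .
\]
Taking $g_k=m_k(L)f_k$ and combining with the first paragraph yields~\eqref{eq2-1} for $f_k\in\mathrm{Ran}(L^{1/2})$, and a density argument extends it to all of $L^p(M)$. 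For $p\in(2,\infty)$ the argument is the same, with the hypothesis on $\{\sqrt t\,\Gamma e^{-tL}\}$ used in place of Corollary~\ref{cor-LPSR}.

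The step I expect to be the main obstacle is the second paragraph: one must peel off from $\Gamma F(tL)$ exactly the operator $\sqrt s\,\Gamma(I+sL)^{-\delta'}$, whose ${\mathcal R}$-boundedness is available, and be left with a function of $L$ that decays at \emph{both} ends of the sector so that~\eqref{eq-sqf} can be used. This is precisely why $\delta>\tfrac12$ is needed (so that an admissible exponent $\delta'$ exists) and why the control of $F'$ near $0$ is needed (it produces the decay of $\widetilde\Phi$ at the origin); near $s=0$ the operator $\Gamma e^{-sL}$ carries no smallness, and the hypothesis on $F'$ is what compensates for this.
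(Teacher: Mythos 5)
Your proof is correct and follows essentially the same route as the paper: reduce $\Gamma F(tL)$ to $\Gamma G(tL)$ with $G(z)=zF'(z)$, insert $(I+sL)^{-\delta'}$, strip off the ${\mathcal R}$-bounded factor $\sqrt{s}\,\Gamma(I+sL)^{-\delta'}$ via Lemma \ref{lem1-0}, apply the square function estimate \eqref{eq-sqf} to $(1+z)^{\delta'}zF'(z)\in H^\infty_0$, and handle the $m_k(L)$ by the Kalton--Weis theorem. The only (harmless) variation is how you pass from $F$ to $zF'$: you use the fundamental theorem of calculus in $s$ together with a Hardy--Schur bound for the kernel $s^{-3/2}\mathbf{1}_{\{s>t\}}$, whereas the paper integrates $\frac{d}{dt}\,|\Gamma F(tL)f|^2$ by parts and absorbs one factor by Cauchy--Schwarz; both yield the same pointwise domination of the vertical square function of $F$ by that of $G$.
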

The sums over $k$ used here can be  taken up to  some $K \in \NN$, the  estimate is independent of $K$. 

\begin{proof} 
By a simple density argument we can assume that $f_k \in L^2(M) \cap L^p(M)$.\\
Let $f \in L^2(M) \cap L^p(M)$ and set $I(x) := \Big(  \int_0^\infty | \Gamma\,  F(tL) f (x) |^2 dt \Big)^{1/2}$ (if $\Gamma = \nabla$ then
actually, $I(x) := \Big( \int_0^\infty | \nabla\,  F(tL) f (x) |_x^2 dt \Big)^{1/2}$ but we ignore the subscript $x$ for the norm $| \cdot |$). 
By integration by parts,
\begin{eqnarray}
I^2 &=& \lim_{t\to \infty} t | \Gamma F(tL) f |^2 - 2 \int_0^\infty t  \Gamma\,  L F'(tL) f.  \Gamma\, F(tL)f \, dt \nonumber\\
&=& - 2 \int_0^\infty t  \Gamma\,  L F'(tL) f. \Gamma\, F(tL)f \, dt \nonumber\\
&\le& 2 \Big(  \int_0^\infty | \Gamma\, tL F'(tL) f |^2 dt \Big)^{1/2} I. \label{eq-I}
\end{eqnarray}
In order to justify the second equality we note that $\| \Gamma g \|_2 \le \| L^{1/2} g \|_2$ and hence  by the spectral resolution of $L$ 
\begin{eqnarray*}
\int_M t | \Gamma\, F(tL) f |^2 \, dx &=& \| \sqrt{t} \Gamma\, F(tL) f \|_2^2\\ \\
&\le&  \| \sqrt{t} L^{1/2} F(tL) f \|_2^2\\
&=& \int_0^\infty |H(t\lambda)|^2 \, dE_\lambda(f,f)
\end{eqnarray*}
where $|H(z)|^2 = |z| |F(z)|^2$. Since $F$ decays as $\frac{1}{|z|^\delta}$ at infinity with some $\delta > \frac{1}{2}$, it follows that $|H(z)|^2$ is bounded and  $|H(t \lambda)|^2 \to 0$ as $t \to \infty$ for all $\lambda \in (0, \infty)$. Note that if $\lambda = 0$, then $H(0) = 0$. We conclude by the dominated convergence theorem that
$ \int_0^\infty |H(t\lambda)|^2 \, dE_\lambda(f,f) \to 0$ as $t \to \infty$. After extraction of a subsequence if necessary we obtain \eqref{eq-I}. \\
Set   $G(z) := z F'(z)$.  It follows from  \eqref{eq-I} that
\begin{equation}\label{ineq-I}
\Big(  \int_0^\infty | \Gamma\,  F(tL) f |^2 dt \Big)^{1/2} \le 2 \Big(  \int_0^\infty | \Gamma\, G(tL) f |^2 dt \Big)^{1/2}.
\end{equation}
The  gain here is that the function $G$ on the RHS has decay at $0$ (and also at infinity) whereas  $F$ was not assumed to have such decay at $0$. This will allow us to use square function estimates.

In order to continue let $ {\mathcal H} := L^2((0, \infty), \frac{dt}{t})$\footnote{in the sequel, for a given $g \in {\mathcal H}$, we use the notation $\| g(t) \|_{\mathcal H}$ instead of  $\| g \|_{\mathcal H}$ or $\| g(.) \|_{\mathcal H}$. This makes reading easier since the variable $t$ appears at several places.} and set 
\[ J :=  \Big\|  \Big( \sum_k \int_0^\infty | \Gamma\,  m_k(tL) F(tL) f_k |^2 dt \Big)^{1/2} \Big\|_p.\]
We apply \eqref{ineq-I} to the function $z \mapsto F(z) m_k(z)$. Then $J$ is bounded (up to a constant) by $J_1 + J_2$ with
\[ J_1 := \Big\|  \Big( \sum_k \int_0^\infty | \Gamma\,  G(tL) m_k(tL) f_k |^2\, dt \Big)^{1/2} \Big\|_p\]
and
\[ J_2 := \Big\|  \Big( \sum_k \int_0^\infty | \Gamma\,  (tL) F(tL) m_k'(tL) f_k |^2\, dt \Big)^{1/2} \Big\|_p.\]
We first estimate $J_1$. By  the Kahane inequality
\begin{eqnarray*}
J_1^p & \le &  \Big\|  \Big( \sum_k \int_0^\infty | \Gamma\,  G(tL) m_k(tL) f_k |^2\, dt \Big)^{1/2} \Big\|_p^p\\
 &=&   \Big\|  \Big( \sum_k \| \sqrt{t}\, \Gamma\,  G(tL) m_k(tL) f_k \|_{\mathcal H}^2 \Big)^{1/2} \Big\|_p^p \\
&\approx& \Big\|  \Big(  \EE\, \| \sum_k {\gothic r}_k  \sqrt{t}\, \Gamma\,  G(tL)  m_k(tL) f_k \|_{\mathcal H}^p \Big)^{1/p} \Big\|_p^p\\
&=& \EE\,  \Big\|   \|  \sqrt{t}\,  \Gamma\,  (I + tL)^{-\delta'} (I + tL)^{\delta'} G(tL) \sum_k {\gothic r}_k  m_k(tL) f_k \|_{\mathcal H}  \Big\|_p^p
\end{eqnarray*} 
where $\delta' \in (\frac{1}{2}, \delta)$. If $p \in (1,2]$, then $\{\sqrt{t}\, \Gamma \, e^{-tL}, \ t > 0 \}$ is ${\mathcal R}$-bounded on $L^p(M)$ by Corollary \ref{cor-LPSR}. If $p \in (2, \infty)$ this ${\mathcal R}$-boundedness was assumed in the theorem. Hence by  Proposition \ref{r-bb-resol} and Lemma \ref{lem1-0} the very last term is bounded (up to a constant) by
$$  \EE\,  \Big\|   \|   (I + tL)^{\delta'} G(tL) \sum_k {\gothic r}_k  m_k(tL) f_k \|_{\mathcal H}  \Big\|_p^p.$$
Set $H(tL) := (I + tL)^{\delta'} G(tL)$
and use again the Kahane inequality to obtain
\begin{equation}\label{eq-II-1}
J_1 \le C\, \EE\,  \Big\|   \|   \sum_k {\gothic r}_k  m_k(tL) H(tL) f_k \|_{\mathcal H}  \Big\|_p.
\end{equation}
In order to continue we approximate the norm $\|   \sum_k {\gothic r}_k  m_k(tL) H(tL) f_k \|_{\mathcal H}$ by the discrete  sum
$$\Big( \sum_{j \in \ZZ} \big{|} \sum_k {\gothic r}_k m_k( 2^{j/N}L) H( 2^{j/N}L) f_k \big{|}^2  \Big)^{1/2},$$
when $N \to + \infty$. We  deal with the term
$$ \tilde{J}_{1,N} := \EE\,  \Big\|    \big( \sum_{j \in \ZZ} \big{|} \sum_k {\gothic r}_k m_k( 2^{j/N}L) H( 2^{j/N}L) f_k \big{|}^2  \big)^{1/2} \Big\|_p.$$
We use a double randomisation argument.  Take another independent Rademacher variables $\tilde{{\gothic r}_j}$ with expectation $\tilde{\EE}$ and apply the Kahane-Khintchine inequality to see that 
$$ \tilde{J}_{1,N} \approx \EE\, \tilde{\EE}\,  \Big\|   \sum_{j } \tilde{{\gothic r}_j} \sum_k {\gothic r}_k m_k( 2^{j/N}L) H( 2^{j/N}L) f_k  \Big\|_p.$$
Since $L^p$ has Pisier's contraction property,  the double expectation is equivalent to a single one with a doubly indexed Rademacher variables ${\tilde{\tilde{\gothic r}}}_{j,k}$ (with expectation $\tilde{\tilde{\EE}}$) as follows
$$ \tilde{\tilde{\EE}}\,  \Big\|   \sum_{j,k } {\tilde{\tilde{\gothic r}}}_{j,k} m_k( 2^{j/N}L) H( 2^{j/N}L) f_k  \Big\|_p.$$
See \cite[Propositions 7.5.3 and 7.5.4]{HytonenII}. 
If $\sup_{k} \| m_k \|_{H^\infty(\Sigma(w_p))} < \infty$, then $\{ m_k(L), \ k \ge 1 \}$ is ${\mathcal R}$-bounded on $L^p(M)$ by 
\cite{KW} or \cite[Theorem~10.3.4]{HytonenII} and the fact that $L$ has bounded holomorphic functional calculus on $L^p(M)$. Using this in the forgoing estimates, we arrive at
$$\tilde{J}_{1,N} \le C\, \sup_{k} \| m_k \|_{H^\infty(\Sigma(w_p))}\, \tilde{\tilde{\EE}}\,  \Big\|   \sum_{j,k } {\tilde{\tilde{\gothic r}}}_{j,k}  H( 2^{j/N}L) f_k  \Big\|_p.$$
We argue as before by using again Pisier's contraction property to see that the term on the RHS is equivalent to \begin{equation*}
\EE\, \tilde{\EE}\,  \Big\|   \sum_{j } \tilde{{\gothic r}_j} \sum_k {\gothic r}_k H( 2^{j/N}L) f_k  \Big\|_p 
\approx \EE\,  \Big\|   \big( \sum_{j }  \big{|} \sum_k {\gothic r}_k H( 2^{j/N}L) f_k \big{|}^2 \big)^{1/2} \Big\|_p.
\end{equation*}
We let $N \to \infty$ and we obtain the bound 
$$J_1 \leq  C\, \EE\, \Big\| \big( \int_0^\infty \big{|} H(tL) \sum_k {\gothic r}_k  f_k \big{|}^2\, \frac{dt}{t} \big)^{1/2} \Big\|_p.$$
Let $\omega' \in (\arcsin|\frac{2}{p}-1|, \omega_p)$. Using the fact that $F$ has decay $\frac{1}{|z|^{\delta}}$ at infinity, it follows  easily from the Cauchy formula that $F'(z)$ decays at least as $\frac{1}{|z|^{1+\delta}}$
for $z \in \Sigma(\omega'_p)$. This implies that the function $H(z) := (1+z)^{\delta'} G(z) = 
(1+z)^{\delta'} z F'(z)$ decays at least as $\frac{1}{|z|^{ \delta -\delta'}}$ at infinity.  On the other hand, since $| F'(z) | \le \frac{C}{|z|^{1-\epsilon_0}}$ as $z \to 0$ it follows that $H  \in H^\infty_0(\Sigma(\omega'_p))$. Therefore, we can use the square function estimate \eqref{eq-sqf} for $H(tL)$ on $L^p(M)$ and we obtain 
\begin{equation}\label{j1-00}
J_1 \le C\, \sup_{k} \| m_k \|_{H^\infty(\Sigma(w_p))}\,  \Big\| \Big( \sum_k |f_k|^2 \Big)^{1/2} \Big\|_p,
\end{equation} 
which gives the desired estimate for $J_1$. 

Next we estimate $J_2$. The proof is similar to the previous one and hence we will not repeat all the details. Let 
again $\delta' \in (\frac{1}{2}, \delta)$ and take $\epsilon \in (0, \epsilon_0)$ such that $\delta' + \epsilon  < \delta$. Set
$H(tL) := (I + tL)^{\delta'} (tL)^\epsilon F(tL)$.  The function $H$ has decay both at $0$ and infinity and hence $H(tL)$ satisfies a square function estimate.  Similarly to \eqref{eq-II-1} we have 
\begin{equation*}
J_2 \le C\, \EE\,  \Big\|   \|   \sum_k {\gothic r}_k   (tL)^{1-\epsilon} m_k'(tL) H(tL) f_k \|_{\mathcal H}  \Big\|_p.
\end{equation*}
We use the double randomisation argument  as before as well as the square function estimate for $H(tL)$ to obtain   \eqref{j1-00} for $J_2$ with  $z \mapsto z^{1-\epsilon} m_k'(z)$ at the place of $m_k$.  Since $\epsilon < \epsilon_0$ it is obvious that 
$| z^{1-\epsilon} m_k'(z) | \le M_1$ for $|z | \le 1$. For $|z| \ge 1$, the inequality, $| z^{1-\epsilon} m_k'(z)  | \le 
\| m_k \|_{H^\infty(\Sigma(w_p))}$ follows from the Cauchy formula. Thus, we obtain $J_2 \le C(M_0 + M_1)$. This  finishes the proof of the theorem. 
\end{proof}

\begin{remark} In the estimate of $J_2$, the role of $(tL)^\epsilon$ is to ensure  that the function $H(z) = (1+ z)^{\delta'} z^\epsilon F(z)$ has decay at zero which then allows  to use a square function estimate for $H(tL)$.  If the function $F$ itself has a decay at zero,
i.e., $F \in H_0^\infty(\Sigma(w_p))$,  then the term $(tL)^\epsilon$ is not needed. This means that  we can take $\epsilon = 0$ in the previous proof.  Next, the term $z m_k'(z)$ is bounded by  $\| m_k \|_{H^\infty(\Sigma(w_p))}$ by the Cauchy formula. Thus,   \eqref{eq2-1} can be replaced in this case by 
\begin{equation*}
\Big\|  \Big( \sum_k \int_0^\infty | \Gamma\,  m_k(tL) F(tL) f_k |^2 dt \Big)^{1/2} \Big\|_p \le C_F \| m_k \|_{H^\infty(\Sigma(w_p))}
  \Big\| \Big( \sum_k |f_k|^2 \Big)^{1/2} \Big\|_p
\end{equation*}
for all bounded holomorphic  functions $m_k$ on $\Sigma(w_p)$. 
\end{remark} 

 In the next result we aim  to consider functions $m_k$ which are not holomorphic.  We will take such functions in a Sobolev space $W^{\delta,2}$ on the half-line $(0, \infty)$. In order to so, we make some assumptions on the manifold $M$.\\
We assume that $M$ satisfies the volume doubling property
\begin{equation}\label{doubl}
v(x,2r) \leq C v(x,r),
\end{equation} 
where $v(x,r)$ denotes the volume of the ball of centre $x \in M$ and radius $r > 0$. The constant $C$ is independent of $x$ and $r$. Note that \eqref{doubl} implies the existence of $C, N > 0$ such that for all $x$ in $M$, $r>0$ and $\lambda \ge 1$ 
\begin{equation}\label{doubl2}
v(x,\lambda r) \leq C \lambda^N v(x,r).
\end{equation}
Next, we assume that the  heat kernel $p_t(x,y)$ of $\Delta$  satisfies the Gaussian upper bound
\begin{equation}\label{gauss}
p_t(x,y) \leq \frac{C}{v(x,t^{1/2})} e^{-c\frac{d^2(x,y)}{t}}
\end{equation} 
for some positive constants $c$ and $C$ and all $x, y \in M$ and $t > 0$.
Since $V$ is non-negative, it is a standard fact that the semigroup $e^{-tL}$ is pointwise dominated by $e^{-t\Delta}$ (see, e.g. \cite[Section 4.5]{Ouh05}) and in particular, 
the heat kernel $k_t(x,y)$ associated with $L = \Delta + V$ 
satisfies the same Gaussian upper bound. We have

\begin{theorem}\label{thm-mult} Suppose that $M$ satisfies \eqref{doubl} and \eqref{gauss}. Let $m_k: [0, \infty) \to \CC$ with support contained in $[\frac{1}{2}, 2]$ for every  $k$. Let $p \in (1, 2]$. Then there exist $C > 0$, independent of $m_k$, and $\delta > 0$ such that
\begin{eqnarray}\label{eq2-2}
&&\Big\|  \Big( \sum_k \int_0^\infty | \nabla  m_k(tL) f_k |^2 dt \Big)^{1/2} \Big\|_p 
+ \Big\|  \Big( \sum_k \int_0^\infty | \sqrt{V} m_k(tL) f_k |^2 dt \Big)^{1/2} \Big\|_p \nonumber\\
&&  \le C\,  \sup_{k} \| m_k \|_{W^{\delta,2}}\,  \Big\| \Big( \sum_k |f_k|^2 \Big)^{1/2} \Big\|_p
\end{eqnarray}
for all $f_k \in L^p(M)$.\\
 For a given $p \in (2, \infty)$, suppose in addition that  $\{ \sqrt{t} \nabla\, e^{-tL}, \ t > 0 \}$ is ${\mathcal R}$-bounded on $L^p(M)$.  Then 
 \begin{equation}\label{eq2-3}
 \Big\|  \Big( \sum_k \int_0^\infty | \nabla  m_k(tL) f_k |^2 dt \Big)^{1/2} \Big\|_p 
 \le C\,  \sup_{k} \| m_k \|_{W^{\delta,2}}\,  \Big\| \Big( \sum_k |f_k|^2 \Big)^{1/2} \Big\|_p
\end{equation}
for all $f_k \in L^p(M)$. If $\{ \sqrt{t}\, \sqrt{V}\, e^{-tL}, \ t > 0 \}$ is ${\mathcal R}$-bounded on $L^p(M)$,  then the same estimate holds with $\sqrt{V}$ in place of $\nabla$. 
\end{theorem}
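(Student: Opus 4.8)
The plan is to follow the scheme of the proof of Theorem~\ref{thm-hol}, the role of the $\mathcal R$-bounded holomorphic functional calculus being played instead by a continuous-parameter, $\ell^2$-valued square function estimate for compactly supported spectral multipliers; this is the only place where the doubling property~\eqref{doubl} and the Gaussian bound~\eqref{gauss} are used, through the recent multiplier theorem of Deleaval--Kriegler~\cite{DK}. By a density argument we may assume $f_k\in L^2(M)\cap L^p(M)$ and, by homogeneity, $\sup_k\|m_k\|_{W^{\delta,2}}\le1$. Fix $\Gamma$ to be $\nabla$ or multiplication by $\sqrt V$; the set $\{\sqrt t\,\Gamma e^{-tL},\,t>0\}$ is $\mathcal R$-bounded on $L^p(M)$ by Corollary~\ref{cor-LPSR} if $p\in(1,2]$ and by hypothesis if $p\in(2,\infty)$ (for $p\le2$ we run the argument for $\Gamma=\nabla$ and $\Gamma=\sqrt V$ separately and add the two estimates to get~\eqref{eq2-2}).

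The first step exploits the compact support of the $m_k$. Set $\widetilde m_k(s):=e^{s}m_k(s)$; since $m_k$ is supported in $[\tfrac12,2]$, so is $\widetilde m_k$, and multiplying by a fixed function of $C^\infty_c(\RR)$ that equals $e^{s}$ on $[\tfrac12,2]$ shows $\|\widetilde m_k\|_{W^{\delta,2}}\le C\|m_k\|_{W^{\delta,2}}$. The functional calculus then gives, on $L^2(M)\cap L^p(M)$,
\[
\Gamma\, m_k(tL)\;=\;\bigl(\sqrt t\,\Gamma e^{-tL}\bigr)\circ\bigl(t^{-1/2}\,\widetilde m_k(tL)\bigr),
\]
which is licit because $m_k(tL)f_k\in D(L^{1/2})$ and $\|\Gamma g\|_2\le\|L^{1/2}g\|_2$. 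Writing $\|\cdot\|_{\mathcal H}$ for the norm of $L^2((0,\infty),dt)$ and repeating, with $\sum_k{\gothic r}_k$ inside, the Kahane step used in the proof of Theorem~\ref{thm-hol}, the quantity $J:=\bigl\|\bigl(\sum_k\int_0^\infty|\Gamma m_k(tL)f_k|^2\,dt\bigr)^{1/2}\bigr\|_p$ satisfies
\[
J^p\;\approx\;\EE\,\Bigl\|\,\bigl\|\sqrt t\,\Gamma e^{-tL}\,t^{-1/2}\textstyle\sum_k{\gothic r}_k\widetilde m_k(tL)f_k\bigr\|_{\mathcal H}\,\Bigr\|_p^p .
\]
By Lemma~\ref{lem1-0}, applied to the $\mathcal R$-bounded family $\{\sqrt t\,\Gamma e^{-tL},\,t>0\}$ with $u(t)=t^{-1/2}\sum_k{\gothic r}_k\widetilde m_k(tL)f_k\in L^p(M,\mathcal H)$, followed by the Kahane step run backwards,
\[
J^p\;\le\;C\,\EE\,\Bigl\|\bigl(\textstyle\int_0^\infty|\sum_k{\gothic r}_k\widetilde m_k(tL)f_k|^2\,\tfrac{dt}{t}\bigr)^{1/2}\Bigr\|_p^p\;\approx\;C\,\Bigl\|\bigl(\textstyle\sum_k\int_0^\infty|\widetilde m_k(tL)f_k|^2\,\tfrac{dt}{t}\bigr)^{1/2}\Bigr\|_p^p .
\]

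It then remains to bound $\bigl\|\bigl(\sum_k\int_0^\infty|\widetilde m_k(tL)f_k|^2\,\tfrac{dt}{t}\bigr)^{1/2}\bigr\|_p$ by $C\,\bigl\|\bigl(\sum_k|f_k|^2\bigr)^{1/2}\bigr\|_p$. This is the $\ell^2$-valued square function estimate for the compactly supported multipliers $\widetilde m_k(tL)$, $t>0$, $k\ge1$, and it is exactly here that~\eqref{doubl} and~\eqref{gauss} are invoked: under these assumptions $L$ obeys a Hörmander--Mikhlin spectral multiplier theorem with smoothness threshold governed by the homogeneous dimension $N$ of~\eqref{doubl2}, and~\cite{DK} supplies its $\mathcal R$-bounded/$\ell^2$-valued refinement, i.e.\ there is $\delta>0$ such that for every sequence $(n_k)$ with $\operatorname{supp} n_k\subset[\tfrac12,2]$ and every $p\in(1,\infty)$,
\[
\Bigl\|\bigl(\textstyle\sum_k\int_0^\infty|n_k(tL)f_k|^2\,\tfrac{dt}{t}\bigr)^{1/2}\Bigr\|_p\;\le\;C\,\sup_k\|n_k\|_{W^{\delta,2}}\,\Bigl\|\bigl(\textstyle\sum_k|f_k|^2\bigr)^{1/2}\Bigr\|_p .
\]
Taking $n_k=\widetilde m_k$ and using $\|\widetilde m_k\|_{W^{\delta,2}}\le C\|m_k\|_{W^{\delta,2}}$ gives~\eqref{eq2-3}; summing over $\Gamma=\nabla,\sqrt V$ in the case $p\le2$ gives~\eqref{eq2-2}.

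The main obstacle is this last square function estimate. In Theorem~\ref{thm-hol} the operators multiplying the $f_k$ were $m_k(L)$, independent of $t$, and could be pulled out by the $\mathcal R$-boundedness of the holomorphic functional calculus of $L$ (the Kalton--Weis theorem); here the multipliers $\widetilde m_k(t\,\cdot\,)$ depend on $t$ and are only of Sobolev regularity $\delta$ with compact support, so one genuinely needs a continuous-parameter, $\mathcal R$-bounded form of a Hörmander multiplier theorem, which cannot be obtained from $\mathcal R$-boundedness of $\{\widetilde m_k(tL)\}$ and Lemma~\ref{lem1-0} alone (the measure $\tfrac{dt}{t}$ on $(0,\infty)$ is infinite), and is the reason the doubling and Gaussian hypotheses appear only in this theorem. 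The remaining ingredients are routine: the legitimacy of the factorization on $L^2\cap L^p$, the use of Lemma~\ref{lem1-0} with the Rademacher sum present (as in Theorem~\ref{thm-hol}), the Kahane--Fubini identity $\EE\bigl\|\sum_k{\gothic r}_k g_k\bigr\|_{L^p(M;\mathcal H)}^p\approx\bigl\|\bigl(\sum_k\|g_k\|_{\mathcal H}^2\bigr)^{1/2}\bigr\|_p^p$, and the bound $\|e^{s}m_k(s)\|_{W^{\delta,2}}\le C\|m_k\|_{W^{\delta,2}}$.
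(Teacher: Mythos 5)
Your proposal is correct and follows essentially the same route as the paper: factor $m_k(tL)=e^{-tL}\varphi_k(tL)$ with $\varphi_k(s)=e^{s}m_k(s)$, use the Kahane inequality and the $\mathcal R$-boundedness of $\{\sqrt t\,\Gamma e^{-tL}\}$ via Lemma~\ref{lem1-0} to reduce to the $\ell^2$-valued square function $\bigl\|\bigl(\sum_k\int_0^\infty|\varphi_k(tL)f_k|^2\,\tfrac{dt}{t}\bigr)^{1/2}\bigr\|_p$, and then invoke the Deleaval--Kriegler Theorem~3.1 together with the equivalence $\|\varphi_k\|_{W^{\delta,2}}\approx\|m_k\|_{W^{\delta,2}}$. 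The only cosmetic difference is your choice of $\mathcal H=L^2((0,\infty),dt)$ with an explicit $t^{-1/2}$ reweighting, whereas the paper takes $\mathcal H=L^2((0,\infty),\tfrac{dt}{t})$ and absorbs the factor.
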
 
\begin{proof}  

Recall that by Corollary \ref{cor-LPSR}, the set $\{ \sqrt{t}\, \Gamma\, e^{-tL}, \ t > 0\}$ is 
${\mathcal R}$-bounded on $L^p(M)$ for all $p \in (1,2]$.
Define
\[ J := \Big\|  \Big( \sum_k \int_0^\infty | \Gamma\,   m_k(tL) f_k |^2 dt \Big)^{1/2} \Big\|_p.
\]
As in the  proof of Theorem \ref{thm-hol} we use the Kahane inequality to obtain
\begin{eqnarray*}
J^p &=& \Big\| \Big( \sum_k \| \sqrt{t}\, \Gamma\, m_k(tL)f_k \|_{\mathcal H}^2 \Big)^{1/2} \Big\|_p^p\\
&\approx&  \Big\| \Big( \EE\,  \| \sqrt{t}\, \Gamma\, \sum_k {\gothic r}_k  m_k(tL)f_k \|_{\mathcal H}^p \Big)^{1/p} \Big\|_p^p\\
&=& \EE\, \Big\|   \| \sqrt{t}\, \Gamma\, e^{-tL} \sum_k {\gothic r}_k  \varphi_k(tL)f_k \|_{\mathcal H} \Big\|_p^p
\end{eqnarray*}
where $\varphi_k(\lambda) = e^{\lambda} m_k(\lambda)$. Using the ${\mathcal R}$-boundedness of $\sqrt{t}\, \Gamma\, e^{-tL}$  and Lemma \ref{lem1-0} we obtain
\[
J^p \le C\, \EE \Big\|  \| \sum_k {\gothic r}_k  \varphi_k(tL) f_k \|_{\mathcal H} \Big\|_p^p.
\]
Hence (use Kahane again)
\begin{eqnarray}\label{eq2-5}
J  &\le& C\, \Big( \int_M \EE\, \| \sum_k {\gothic r}_k \varphi_k(tL) f_k \|_{\mathcal H}^p \Big)^{1/p} \nonumber\\
&\le & C'\,  \Big( \int_M  \Big(  \sum_k \| \varphi_k(tL) f_k \|_{\mathcal H}^2 \Big)^{p/2} \Big)^{1/p} \nonumber\\
&=& C'\, \Big\|  \Big(  \sum_k \| \varphi_k(tL) f_k \|_{\mathcal H}^2 \Big)^{1/2}  \Big\|_p.
\end{eqnarray}
Now, since $L$ satisfies the Gaussian upper bound \eqref{gauss} and $M$ satisfies the doubling condition \eqref{doubl2} (in which we take $N$ to be the smallest possible), then it is known that $L$ satisfies spectral multiplier theorems. In particular, since $\varphi_k$ has compact support, one has
$\varphi_k (L)$ is bounded on $L^p(M)$ provided $\varphi_k \in W^{\alpha, 2}$ for some $\alpha > N | \frac{1}{2} -\frac{1}{p} | + \frac{1}{2}$. See \cite{DOS} or  \cite{COSY}, Theorem~A, and the references therein. Finally, \cite[Theorem~3.1]{DK} asserts that the RHS term in \eqref{eq2-5} is bounded by (up to a constant)
\[
\sup_k \| \varphi_k \|_{W^{\delta, 2}}\,  \Big\| \Big( \sum_k |f_k|^2 \Big)^{1/2} \Big\|_p
\]
with $\delta = \alpha + 1$. Since the support of $m_k$ is contained in $[\frac{1}{2}, 2]$, the quantities  $\| \varphi_k \|_{W^{\delta, 2}}$
and $\| m_k \|_{W^{\delta, 2}}$ are equivalent. This proves \eqref{eq2-2}.\\
For $p > 2$ the proof is the same since we assume here that  $\{ \sqrt{t}\, \Gamma\, e^{-tL}, \ t > 0 \}$ is ${\mathcal R}$-bounded on
$L^p(M)$. 
\end{proof}

\begin{remark}\label{rem2} 
1- In the proof we have taken $\delta = \alpha + 1$  with $\alpha > N | \frac{1}{2} -\frac{1}{p} | + \frac{1}{2}$. The latter  value is the order required in the regularity of spectral  multipliers under the sole conditions \eqref{doubl2} and \eqref{gauss}. There are however many situations where one has  sharp spectral multiplier results and hence a smaller order $\alpha$. This is the case if for example $L$ satisfies the so-called restriction estimate or if the corresponding Schr\"odinger group $e^{itL}$ satisfies global Strichartz estimates. We refer to \cite{DOS} and \cite{COSY}.\\
2- We assumed in the previous theorem that the functions $m_k$ are compactly supported. For more general functions, we may use Corollary~3.3 from \cite{DK} and obtain the same results under the condition
\[ \sum_n \sup_k \| \lambda \mapsto \sqrt{2^n \lambda} m_k (2^n \lambda ) \phi_0(\lambda) \|_{W^{\delta,2}} < \infty
\]
for some auxiliary non trivial function  $\phi_0$ having compact support in $(0, \infty)$.\\
 3- The assumption of the theorem for $p > 2$ is valid if  the Riesz transform $\nabla L^{-1/2}$  is bounded on $L^p(M)$. This latter property may not be satisfied in some case even for $L = \Delta$, especially when $p > m$ where $m$ is the dimension of $M$ (see \cite{CoulhonDuong}). For $L= \Delta + V$ we may have boundedness of the corresponding Riesz transform (together with $\sqrt{V} L^{-1/2}$) on $L^p$ under some integrability conditions on $V$ (cf. \cite{AO} or \cite{Devyver}). In the Euclidean case $M = \RR^m$, $\nabla L^{-1/2}$ is bounded on $L^p$ for a range of $p > 2$ if $V$ is in an appropriate reverse H\"older class (cf. \cite{BenAli} or \cite{Shen}). We shall come back to these examples again in Section \ref{sec5} in which we will see that the Littlewood-Paley-Stein functional might be unbounded outside the range of $p$ for which we have boundedness of the Riesz transform. \\
 4- In \cite{Ouhabaz}, it is shown for a class of potentials $V$ that the boundedness on $L^p(\RR^m)$ for some $p > m$ of the Littlewood-Paley-Stein functional 
  \[ 
  H_\nabla(f) = \Big(  \int_0^\infty | \nabla  e^{-tL} f |^2 dt \Big)^{1/2}
 \]
 implies $V = 0$.
\end{remark}

\section{Other Littlewood-Paley-Stein functionals}\label{sec3}

Following \cite{Chen2014}, the local Riesz tranform for $L$ is defined by $R_{loc} := \nabla (L + I)^{-1/2}$ and the Riesz transform at infinity is 
$R_\infty := \nabla L^{-1/2} e^{-L}$. Then (cf. \cite{Chen2014} Theorem~1.5), the Riesz transform is bounded on $L^p(M)$ {\it if and only if}
$R_{loc}$ and $R_\infty$ are both bounded on $L^p(M)$.
The direct implication is obvious. For the converse, we write
\begin{align}\label{Chen}
\| \nabla L^{-1/2} f \|_p &\leq \|\nabla L^{-1/2} e^{-L} f {\|}_p + \| \nabla (L+I)^{-1/2} (L+I)^{1/2}L^{-1/2}  ( I - e^{-L}) f \|_p\nonumber \\
&\leq C\, \Big( \| f\|_p +\| (L+I)^{1/2} L^{-1/2}  ( I - e^{-L}) f \|_p \Big).
\end{align}
Since $(L+I)^{1/2}L^{-1/2}  ( I - e^{-L}) = \varphi(L)$  with $\varphi(z) = \sqrt{z+1} \frac{1- e^{-z}}{\sqrt{z}}$ we use the boundedness of the holomorphic functional calculus on $L^p$ and obtain $\|\nabla L^{-1/2} f \|_p \leq C\,  \| f\|_p$.\\
The same observation is also valid for $\sqrt{V}$ in place of $\nabla$. 

\medskip
We define the local vertical Littlewood-Paley-Stein functional and the vertical Littlewood-Paley-Stein functional at infinity associated with $L$ by 
\[
H^{loc}_\Gamma(f) := \Big(  \int_0^1 | \Gamma e^{-tL} f |^2 dt \Big)^{1/2} \ \ {\rm and} \ \ 
H^{(\infty)}_\Gamma(f) := \Big( \int_1^\infty | \Gamma e^{-tL} f |^2 dt \Big)^{1/2}. 
\]
We restrict our selves in this section to these Littlewood-Paley-Stein functionals but we can also deal with  general ones as in Theorem~\ref{thm-hol}  at least for functions $F$ which have some exponential decay at infinity. \\
As we have already remarked in the introduction,  these functionals are always bounded on $L^p(M)$ for $p \in (1,2]$. Thus,  we consider in the sequel the case $p > 2$, only. 

\begin{proposition}\label{thm-loc} 
Let $\Gamma$ be either $\nabla$ or multiplication by $\sqrt{V}$ and let $p \in (2, \infty)$. \\
1) If the set $\{ \sqrt{t}\, \Gamma\, e^{-tL}, \ t \in (0, 1] \}$ is ${\mathcal R}$-bounded on $L^p(M)$, 
 then the local vertical Littlewood-Paley-Stein functional $H^{loc}_\Gamma$ is  bounded on $L^p(M)$. \\
2) Similarly, if the set $\{ \sqrt{t-1}\, \Gamma\, e^{-tL}, \ t  > 1 \}$ is ${\mathcal R}$-bounded on $L^p(M)$, then $H^{(\infty)}_\Gamma$ is bounded on $L^p(M)$.  
\end{proposition}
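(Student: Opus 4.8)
The plan is to adapt the integration-by-parts argument from the proof of Theorem~\ref{thm-hol} (with $F(z)=e^{-z}$) to the truncated ranges $(0,1]$ and $[1,\infty)$, keeping careful track of the boundary contribution at $t=1$. Throughout fix $f\in L^2(M)\cap L^p(M)$ and set $\eta(z)=ze^{-z}\in H_0^\infty(\Sigma(\omega_p))$. For part 1, integrating by parts in $t$ on $(0,1]$ (first on $[\epsilon,1]$, then letting $\epsilon\to0$; the boundary term at $0$ vanishes since $\|\sqrt t\,\Gamma e^{-tL}f\|_2\le\|\sqrt t\,L^{1/2}e^{-tL}f\|_2\to0$, exactly as in the proof of Theorem~\ref{thm-hol} after extraction of a subsequence) gives the pointwise identity
\[
(H^{loc}_\Gamma(f))^2=|\Gamma e^{-L}f|^2+2\int_0^1 t\,\Gamma e^{-tL}f\cdot\Gamma L e^{-tL}f\,dt .
\]
Applying Cauchy--Schwarz to the last integral and solving the resulting quadratic inequality yields the pointwise bound $H^{loc}_\Gamma(f)\le 2\big(\int_0^1|\Gamma\,tLe^{-tL}f|^2dt\big)^{1/2}+|\Gamma e^{-L}f|$. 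The term $|\Gamma e^{-L}f|$ is harmless: $\sqrt 1\,\Gamma e^{-L}$ belongs to the $\mathcal R$-bounded, hence uniformly bounded, family $\{\sqrt t\,\Gamma e^{-tL},\,t\in(0,1]\}$, so $\|\Gamma e^{-L}f\|_p\le C\|f\|_p$. For the first term I would use the operator identity $\Gamma\,tLe^{-tL}=\big(\sqrt t\,\Gamma e^{-(t/2)L}\big)\circ\big(\sqrt t\,Le^{-(t/2)L}\big)$, observe that $\{\sqrt t\,\Gamma e^{-(t/2)L},\,t\in(0,1]\}$ is $\mathcal R$-bounded (it is $\sqrt2$ times the sub-family of the hypothesis indexed by $u=t/2\in(0,1/2]$), apply Lemma~\ref{lem1-0} on $(0,1]$ to bound $\big\|(\int_0^1|\Gamma\,tLe^{-tL}f|^2dt)^{1/2}\big\|_p$ by $C\big\|(\int_0^1 t\,|Le^{-(t/2)L}f|^2dt)^{1/2}\big\|_p$, and then note $\int_0^1 t\,|Le^{-(t/2)L}f|^2dt\le 4\int_0^\infty|\eta(rL)f|^2\,\frac{dr}{r}$, whose square root has $L^p$-norm $\le C\|f\|_p$ by the square function estimate \eqref{eq-sqf}. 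This proves part 1.

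For part 2 the key point is the order of operations. I would \emph{first} substitute $t=1+s$, so that
\[
(H^{(\infty)}_\Gamma(f))^2=\int_1^\infty|\Gamma e^{-tL}f|^2\,dt=\int_0^\infty|\Gamma e^{-sL}(e^{-L}f)|^2\,ds ,
\]
and only \emph{then} integrate by parts in $s$ over the full half-line. Both boundary terms now vanish (at $0$ because $\Gamma e^{-sL}(e^{-L}f)\to\Gamma e^{-L}f$, which is finite a.e.; at $\infty$ by the same spectral estimate as above), so this time there is no leftover $|\Gamma e^{-L}f|$ and one obtains $H^{(\infty)}_\Gamma(f)\le 2\big(\int_0^\infty|\Gamma\,sLe^{-sL}e^{-L}f|^2ds\big)^{1/2}$. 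Since $s$ now runs over all of $(0,\infty)$ with the natural weight, the same factorisation applies: $\Gamma\,sLe^{-sL}e^{-L}=\big(\sqrt s\,\Gamma e^{-(s/2)L}e^{-L}\big)\circ\big(\sqrt s\,Le^{-(s/2)L}\big)$, and $\{\sqrt s\,\Gamma e^{-(s/2)L}e^{-L},\,s>0\}$ is $\mathcal R$-bounded because, after the changes of variable $u=s/2$ and then $\tau=u+1$, it equals $\sqrt2\,\{\sqrt{\tau-1}\,\Gamma e^{-\tau L},\,\tau>1\}$, which is the hypothesis. Lemma~\ref{lem1-0} on $(0,\infty)$ followed by \eqref{eq-sqf} for $\eta$ finishes the argument exactly as in part 1. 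The proof is identical when $\nabla$ is replaced by multiplication by $\sqrt V$, since all one uses about $\Gamma$ is $\|\Gamma g\|_2\le\|L^{1/2}g\|_2$ and the assumed $\mathcal R$-boundedness.

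The genuine obstacle — and the reason for the reordering in part 2 — is that integrating by parts directly in $t$ on $[1,\infty)$ produces $\int_1^\infty|\Gamma\,tLe^{-tL}f|^2dt$, and peeling off a member of the family $\{\sqrt{t-1}\,\Gamma e^{-tL}\}$ then forces the compensating scalar $(t-1)^{-1/2}$, which makes the companion square-function integral diverge near $t=1$. Substituting $t=1+s$ first moves this degeneracy to $s=0$, where it is absorbed into the ordinary Littlewood--Paley normalisation $\sqrt s$ and the relevant operator family becomes, after reparametrisation, precisely the one assumed. The remaining technical points — the vanishing of the boundary terms and the a priori finiteness of $H^{loc}_\Gamma(f)$ and $H^{(\infty)}_\Gamma(f)$ needed before solving the quadratic inequalities — are handled, exactly as in the proof of Theorem~\ref{thm-hol}, by truncating the integrals to $[\epsilon,R]$ and passing to the limit along suitable subsequences.
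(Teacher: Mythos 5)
Your part 1 is essentially the paper's own proof: the same integration by parts on $(0,1]$ with the boundary term $|\Gamma e^{-L}f|$ picked up at $t=1$ and controlled by the ${\mathcal R}$-boundedness (taking $t=1$), the same factorisation $\Gamma\, tLe^{-tL}=(\sqrt{t}\,\Gamma e^{-(t/2)L})(\sqrt{t}\,Le^{-(t/2)L})$, then Lemma~\ref{lem1-0} followed by a square function estimate.

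Your part 2, however, takes a genuinely different and cleaner route than the paper. The paper integrates by parts directly on $[1,\infty)$ with $(t-2)$ as primitive, which leaves a boundary contribution $|\Gamma e^{-L}f|$ at $t=1$ and forces a further split $\int_1^\infty=\int_1^2+\int_2^\infty$; the $[1,2]$ piece is dismissed with the claim that $\|\Gamma e^{-tL}\|_p\le C/\sqrt{t}$, and $\Gamma e^{-L}$ is used as a bounded operator on $L^p$. Neither of these facts is an immediate consequence of the stated hypothesis that $\{\sqrt{t-1}\,\Gamma e^{-tL},\,t>1\}$ is ${\mathcal R}$-bounded, since $\sqrt{t-1}\to 0$ at $t=1^+$; the paper's argument is therefore somewhat terse at exactly this point. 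Your reordering — substituting $t=1+s$ first, so that $H^{(\infty)}_\Gamma(f)=H_\Gamma(e^{-L}f)$, and only then integrating by parts on the full half-line with $s$ as primitive — removes both the boundary term at $t=1$ and the troublesome $[1,2]$ region. The resulting family $\{\sqrt{s}\,\Gamma e^{-(s/2)L}e^{-L},\,s>0\}$ reparametrises exactly to $\sqrt{2}\,\{\sqrt{\tau-1}\,\Gamma e^{-\tau L},\,\tau>1\}$, i.e.\ precisely the hypothesis, and the companion square-function integral is the standard one. Your closing remark correctly identifies the obstruction that motivates the reordering. In short: part 2 of your proof is not only correct but, as written, more self-contained than the proof in the paper.
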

\begin{proof}  The arguments  are similar to  the proof of Theorem \ref{thm-hol}. 
For assertion 1),  the same proof as \eqref{ineq-I} gives
 \begin{equation}\label{ineq-I-1}
 \Big(  \int_0^1 | \Gamma\,  e^{-tL} f |^2 dt \Big)^{1/2} \le 2 \Big( | \Gamma\, e^{-L}f | +  \Big(  \int_0^1 | \Gamma\, tL e^{-tL} f |^2 dt \Big)^{1/2}  \Big). 
 \end{equation}
 Note that the ${\mathcal R}$-boundedness assumption implies that $\Gamma\, e^{-L}$ is a bounded operator on $L^p(M)$. The second term on the RHS of \eqref{ineq-I-1} coincides (up to a constant) with
 \[
 \Big(  \int_0^1 | \sqrt{\frac{t}{2}}\, \Gamma\, e^{-\frac{t}{2} L}  \sqrt{\frac{t}{2}} L e^{-\frac{t}{2}L} f |^2 dt \Big)^{1/2}. 
 \]
Since $\{ \sqrt{\frac{t}{2}}\, \Gamma\, e^{-\frac{t}{2} L}, \ t \in (0, 1] \}$ is ${\mathcal R}$-bounded we apply  Lemma \ref{lem1-0}.  Note that the term $  \sqrt{\frac{t}{2}} L e^{-\frac{t}{2}L} f$ is in $L^2((0,\infty), dt)$ by a square function estimate.
 
 In order to prove assertion 2),  we start  by writing
 $$\int_1^\infty | \Gamma\,  e^{-tL} f |^2 dt  = \left[ (t-2) | \Gamma\,  e^{-tL} f |^2 \right]_1^\infty + 2 \int_1^\infty (t-2) \Gamma L e^{-tL} f.\Gamma e^{-tL}f dt$$
 and proceed exactly as in the proof of   \eqref{ineq-I} to obtain 
 \begin{equation}\label{ineq-I-2}
 \Big(\int_1^\infty | \Gamma\,  e^{-tL} f |^2 dt \Big)^{1/2} \le 2 \Big( | \Gamma\, e^{-L} f | +  \Big( \int_1^\infty | (t-2) \Gamma\, L e^{-tL} f |^2 dt \Big)^{1/2} \Big). 
 \end{equation}
Next, since $\Gamma e^{-tL}$ has $L^p$-norm bounded by $\frac{C}{\sqrt{t}}$, the part  $\Big( \int_1^2 | (t-2) \Gamma\, L e^{-tL} f |^2 dt \Big)^{1/2}$ is obviously bounded on $L^p(M)$. It remain to deal with the part involving $t \ge 2$. This part coincides with  (up to constant)
\[
\Big( \int_2^\infty | \sqrt{\frac{t}{2} -1}\, \Gamma\, e^{-\frac{t}{2}L}\, \sqrt{\frac{t}{2} -1}\,  L e^{-\frac{t}{2}L} f |^2 dt \Big)^{1/2}.
\]
Now, we use the ${\mathcal R}$-boundedness of  $\{ \sqrt{\frac{t}{2} -1}\, \Gamma\, e^{-\frac{t}{2}L}, \ t >2 \}$, Lemma \ref{lem1-0} and a square function estimate for the term $\sqrt{\frac{t}{2} -1}\, L e^{-\frac{t}{2}L} f$ to obtain $2)$.
\end{proof}

We have the following version of  Proposition \ref{riesz-rbb}. 
\begin{proposition}\label{rbb-loc} 
Let $p \in (1,\infty)$. 
If the local Riesz transform  $\Gamma (L+I)^{-1/2}$ is bounded on $L^p(M)$, then  $\{ \sqrt{t}\, \Gamma\, e^{-tL}, \ t \in (0, 1] \}$ is ${\mathcal R}$-bounded on $L^p(M)$. Similarly, if the Riesz transform at infinity $\Gamma L^{-1/2}e^{-L}$ is bounded on $L^p(M)$, then 
$\{ \sqrt{t-1}\, \Gamma\, e^{-tL}, \ t > 1 \}$ is ${\mathcal R}$-bounded on $L^p(M)$.
\end{proposition}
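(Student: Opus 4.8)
The plan is to mimic the proof of Proposition \ref{riesz-rbb}, with the resolvent $L^{-1/2}$ replaced by the \emph{local} resolvent $(L+I)^{-1/2}$ in the first case and by $L^{-1/2}e^{-L}$ in the second, and to absorb the extra factors into a uniformly bounded (hence, by Kalton--Weis, $\mathcal R$-bounded) family of functions of $L$. Concretely, for the local case, fix $t_k \in (0,1]$ and $f_k \in L^p(M)$, $k = 1,\dots,n$, and write
\[
\sqrt{t_k}\,\Gamma e^{-t_k L} = \Gamma (L+I)^{-1/2} \cdot \sqrt{t_k}\,(L+I)^{1/2} e^{-t_k L} = \Gamma (L+I)^{-1/2}\,\psi_{t_k}(L),
\]
where $\psi_t(z) := \sqrt{t}\,(1+z)^{1/2} e^{-tz}$. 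The key elementary observation is that the family $\{\psi_t : t \in (0,1]\}$ is uniformly bounded in $H^\infty(\Sigma(\omega_p))$: indeed $\sqrt{t}\,(1+z)^{1/2}e^{-tz} = \sqrt{t(1+z)}\,e^{-t(1+z)}\cdot e^{t}$, and since $t \le 1$ the factor $e^t$ is bounded, while $w \mapsto \sqrt{w}\,e^{-w}$ is bounded on $\Sigma(\omega_p)$ (here $w = t(1+z)$ stays in a sector slightly larger than $\Sigma(\omega_p)$ for $z \in \Sigma(\omega_p)$). Because $L$ has a bounded holomorphic functional calculus on $L^p(M)$ with angle $\omega_p$, the Kalton--Weis theorem (\cite{KW}, or \cite{HytonenII}, Theorem~10.3.4) gives that $\{\psi_t(L) : t \in (0,1]\}$ is $\mathcal R$-bounded on $L^p(M)$. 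Then, exactly as in Proposition \ref{riesz-rbb},
\[
\EE\left\|\sum_{k=1}^n {\gothic r}_k \sqrt{t_k}\,\Gamma e^{-t_k L} f_k\right\|_p
= \EE\left\|\Gamma(L+I)^{-1/2}\sum_{k=1}^n {\gothic r}_k \psi_{t_k}(L) f_k\right\|_p
\le C\,\EE\left\|\sum_{k=1}^n {\gothic r}_k \psi_{t_k}(L) f_k\right\|_p
\le C'\,\EE\left\|\sum_{k=1}^n {\gothic r}_k f_k\right\|_p,
\]
using the boundedness of the local Riesz transform in the first inequality and the $\mathcal R$-boundedness of $\{\psi_{t_k}(L)\}$ in the second. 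This is precisely the $\mathcal R$-boundedness of $\{\sqrt{t}\,\Gamma e^{-tL} : t \in (0,1]\}$.

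For the Riesz transform at infinity, fix $t_k > 1$ and write
\[
\sqrt{t_k - 1}\,\Gamma e^{-t_k L} = \Gamma L^{-1/2}e^{-L} \cdot \sqrt{t_k - 1}\,L^{1/2} e^{-(t_k - 1)L} = \Gamma L^{-1/2}e^{-L}\,\eta_{t_k}(L),
\]
with $\eta_t(z) := \sqrt{t-1}\,z^{1/2} e^{-(t-1)z}$. Again $\eta_t(z) = \sqrt{(t-1)z}\,e^{-(t-1)z}$, which for $t > 1$ is of the form $w \mapsto \sqrt{w}\,e^{-w}$ with $w = (t-1)z \in \Sigma(\omega_p)$, so $\{\eta_t : t > 1\}$ is uniformly bounded in $H^\infty(\Sigma(\omega_p))$ and hence $\{\eta_t(L) : t > 1\}$ is $\mathcal R$-bounded. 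The same three-line computation as above, now using the boundedness of $\Gamma L^{-1/2}e^{-L}$ on $L^p(M)$, yields the $\mathcal R$-boundedness of $\{\sqrt{t-1}\,\Gamma e^{-tL} : t > 1\}$.

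I do not expect any serious obstacle: this is essentially a bookkeeping variant of Proposition \ref{riesz-rbb}. The one point that needs a little care is the uniform $H^\infty$-bound on the auxiliary families $\{\psi_t\}$ and $\{\eta_t\}$ over the \emph{unbounded} parameter ranges ($t \in (0,1]$ for the local case — where one must check behaviour as $t \to 0$ — and $t > 1$ for the case at infinity — where one checks $t \to \infty$); in both cases rewriting the function in the scale-invariant variable $t(1+z)$ resp.\ $(t-1)z$ reduces everything to the single bounded function $\sqrt{w}\,e^{-w}$ on a fixed sector. One should also note that the boundedness hypothesis on the (local / at infinity) Riesz transform already guarantees that $\Gamma(L+I)^{-1/2}$, resp.\ $\Gamma L^{-1/2}e^{-L}$, is a well-defined bounded operator on $L^p(M)$, so the factorizations above are legitimate on the dense range of the relevant fractional power, and extend by density.
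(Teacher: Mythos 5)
Your proposal is correct and follows exactly the same approach as the paper: factor the operator through the (local / at infinity) Riesz transform, and control the remaining piece via the Kalton--Weis theorem for the $\mathcal R$-bounded $H^\infty$-calculus of $L$. The paper treats the second case explicitly with the same factorization $\Gamma L^{-1/2}e^{-L}\cdot\sqrt{(t-1)L}\,e^{-(t-1)L}$ and for the first case just remarks that it is "exactly the same as for Proposition~\ref{riesz-rbb}"; your verification that $\psi_t(z)=\sqrt{t}(1+z)^{1/2}e^{-tz}=e^t\sqrt{t(1+z)}\,e^{-t(1+z)}$ is uniformly bounded in $H^\infty(\Sigma(\omega_p))$ for $t\in(0,1]$ is the small extra detail implicit there (and one can note that $t(1+z)$ in fact stays inside $\Sigma(\omega_p)$, not just a slightly larger sector, since adding $1$ decreases the argument).
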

\begin{proof} The proof of the first assertion is exactly the same as for Proposition \ref{riesz-rbb}. We prove the second one.  Let $f_k \in L^p(M)$ and $t_k > 1$ for $k=1,...,n$.  We have 
\begin{eqnarray*}
\EE \| \sum_{k=1}^n {\gothic r}_k \sqrt{t_k-1}\, \Gamma\, e^{-t_k L} f_k \|_p 
&=& \EE  \| \Gamma\,  L^{-1/2} e^{-L}  \sum_{k=1}^n {\gothic r}_k ((t_k-1) L)^{1/2} e^{-(t_k-1) L} f_k \|_p\\
&\le& C\, \EE  \|  \sum_{k=1}^n {\gothic r}_k ((t_k-1) L)^{1/2} e^{-(t_k-1) L} f_k \|_p.
\end{eqnarray*}
We finish the proof by appealing again to the ${\mathcal R}$-boundedness of the holomorphic functional calculus.
\end{proof}

It is an interesting question whether the boundedness of the Littlewood-Paley-Stein functional implies the boundedness of the Riesz transform. For $L= \Delta$ on $\RR^m$ this is true and very easy to prove (see \cite{Stein}, p. 52-54). Note however that this uses heavily the fact that $\nabla $ and $\Delta$ commute, a fact which is rarely satisfied outside the Euclidean context. If $L = \Delta$ and $M$ satisfies \eqref{doubl2} and $L^2$-Poincar\'e inequalities, then the $L^p$-boundedness of $H$ implies boundedness of the Riesz transform
on $L^r$ for $r \in (1, p)$. Indeed, the boundedness of $H$ implies that
$\| \nabla e^{-t \Delta} \|_p \le \frac{C}{\sqrt{t}}$ by Proposition \ref{prop-multiplication} below. The latter inequality implies the boundedness of the Riesz transform on $L^r(M)$ for $r < p$, see \cite{ACDH} or \cite{BF}. \\
In general, we do not have an answer to the previous question but we make some observations below.  Let $d$ be the exterior derivative on differential forms and let $d^*$ its formal adjoint. One defines the 
Hodge-de Rham Laplacian $\vec{\Delta}$ on $1$-differential forms by the formal expression  $\vec{\Delta} = dd^* + d^*d$. It is the self-adjoint operator associated with the symmetric  bilinear form
$$\vec{\mathfrak{a}}(w,\eta) = \int_M d w.d\eta + \int_M d^*w.d^*\eta,$$
for $w$ and $\eta$ in the Sobolev space of $1$-forms such that $|w|^2, | d w |^2 + |d^* w|^2$ are integrable on $M$. As a self-adjoint, $-\vec{\Delta}$ generates a $C_0$-semigroup $(e^{-t \vec{\Delta}})$ on the $L^2$-space of 
$1$-forms. 
We also recall  the commutation property $d \vec{\Delta} = \Delta d$. The reader can consult \cite{Driver} and the references there. 

Let $p \in (1, \infty)$ and suppose that $\vec{\Delta}$ satisfies the (weak) lower square function estimate
\begin{equation}\label{eq3-1}
\| e^{-\vec{\Delta}} w \|_p \le C\,  \Big\| \Big( \int_1^\infty  | {\vec{\Delta}}^{1/2} e^{-t\vec{\Delta}} w |^2 dt\Big)^{1/2} \Big\|_p.
\end{equation}
Then the boundedness on $L^p(M)$ of the Littlewood-Paley-Stein functional at infinity implies the boundedness on $L^p(M)$ of Riesz transform at infinity (compare with \cite[Theorem 5.1]{CoulhonCPAM}). Indeed, we chose $w = d \Delta^{-1/2} f$ for  $ f$  in the range of $\Delta^{1/2}$, and notice that 
$e^{-\vec{\Delta}} d \Delta^{-1/2} f = d \Delta^{-1/2} e^{-\Delta} f$ 
and $\vec{\Delta}^{1/2} e^{-t\vec{\Delta}} d \Delta^{-1/2} f = d e^{-t\Delta} f$. Then \eqref{eq3-1} gives
$$
 \| R_\infty f \|_p \le C\,  \Big\| \Big( \int_1^\infty  | d e^{-t \Delta} f |^2 dt \Big)^{1/2} \Big\|_p \le C'\, \| f \|_p.$$
If for example the Ricci curvature is bounded from below, then the local Riesz transform is bounded on $L^p(M)$ for all 
$p \in (1, \infty)$ (cf. \cite{Bakry}). This together with the observation \eqref{Chen}  imply  the boundedness of the Riesz transform on $L^p(M)$. 

The next observation is that if we have the following Littlewood-Paley-Stein estimate for $\vec{\Delta}$ on $1$-forms
\begin{equation}\label{eq3-2}
 \Big\| \Big( \int_0^\infty  | d^* e^{-t \vec{\Delta}} w |^2 dt\Big)^{1/2} \Big\|_p \le C\, \| w \|_p, 
 \end{equation}
then the Riesz transform $d \Delta^{-1/2}$ is bounded on $L^q(M)$, where $\frac{1}{p} + \frac{1}{q} = 1$. Indeed, using the lower square function estimate for $\Delta$ and the commutation property we obtain 
\begin{eqnarray*} 
\| d^* w \|_p &\le& C\, \Big\| \Big( \int_0^\infty  | \Delta^{1/2} e^{-t \Delta}  d^* w |^2 dt\Big)^{1/2} \Big\|_p\\
&=& C\, \Big\| \Big( \int_0^\infty  | d^* e^{-t \vec{\Delta}}  \vec{\Delta}^{1/2} w |^2 dt\Big)^{1/2} \Big\|_p\\
&\le& C'\, \| \vec{\Delta}^{1/2} w \|_p.
\end{eqnarray*}
This means that the Riesz transform $d^* \vec{\Delta}^{-1/2}$ is bounded on $L^p(\Lambda^1T^*M)$ into $L^p(M)$. The adjoint is then bounded on $L^q(M)$. But the adjoint is exactly the Riesz transform $d \Delta^{-1/2}$ (use the commutation property again). 

We also mention the following related result.  It is taken from \cite{CoulhonCPAM} for $L= \Delta$ and \cite{Ouhabaz} for $L = \Delta + V$. We reproduce the proof for the sake of completeness.  

\begin{proposition}\label{prop-multiplication}
Let $p \in (1, \infty)$ and set $\Gamma = \nabla$ or $\sqrt{V}$. Suppose that 
\begin{equation}\label{H-L}
  \Big\| \Big( \int_0^\infty  | \Gamma e^{-t L} f |^2 dt\Big)^{1/2} \Big\|_p \le C\, \| f \|_p
\end{equation}
for all $f \in L^p(M)$. Then 
\begin{equation}\label{32}
\| \Gamma f \|_p \le C'\, \| L f \|_p^{1/2} \| f \|_p^{1/2}
\end{equation}
for $f$ in the domain of $L$, seen as an operator on $L^p(M)$. 
\end{proposition}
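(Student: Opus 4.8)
The plan is to derive \eqref{32} from two applications of the hypothesis \eqref{H-L}, together with the fundamental theorem of calculus for the semigroup and a scaling argument. By a routine density argument we may take $f$ in the core $\{e^{-\tau L}g:\ \tau>0,\ g\in L^2(M)\cap L^p(M)\}$ of $L$ on $L^p(M)$; for such $f$ the function $s\mapsto \Gamma e^{-sL}f=\Gamma e^{-(s+\tau)L}g$ is a well-defined element of $L^2(M,TM)$ (resp.\ $L^2(M)$) for every $s\ge0$, all integrals below converge as Bochner integrals, and $Lf\in L^2(M)\cap L^p(M)$, so that $H_\Gamma(f)$ and $H_\Gamma(Lf)$ are finite a.e.\ on $M$; the inequality then extends to the domain of $L$ on $L^p(M)$ by closedness, exactly as the bundle-valued subtleties are handled elsewhere in the paper.

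First I would write, for any $t>0$, using $\frac{d}{ds}e^{-sL}f=-Le^{-sL}f$ and the closedness of $\Gamma$,
\[
\Gamma f=\Gamma e^{-tL}f+\int_0^t\Gamma e^{-sL}(Lf)\,ds .
\]
Applying the Cauchy--Schwarz inequality in the variable $s$ pointwise on $M$ bounds the integral term by $\sqrt t\,H_\Gamma(Lf)(x)$. To control the remaining term $\Gamma e^{-tL}f$ I would use the same identity once more, in the form $\Gamma e^{-tL}f=\Gamma e^{-sL}f-\int_s^t\Gamma e^{-uL}(Lf)\,du$ for $0<s<t$, average this identity over $s\in(0,t)$, and apply Cauchy--Schwarz to each of the two resulting averages (estimating the inner integral $\int_s^t$ by $\int_0^t$). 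This yields the pointwise bound
\[
|\Gamma e^{-tL}f(x)|\le\frac{1}{\sqrt t}\,H_\Gamma(f)(x)+\sqrt t\,H_\Gamma(Lf)(x),
\]
and combining with the first step gives $|\Gamma f(x)|\le\frac{1}{\sqrt t}H_\Gamma(f)(x)+2\sqrt t\,H_\Gamma(Lf)(x)$ a.e.\ on $M$, for every $t>0$.

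It then remains to take $L^p(M)$-norms and invoke \eqref{H-L} twice --- once for $f$ and once for $Lf$ --- to obtain
\[
\|\Gamma f\|_p\le\frac{C}{\sqrt t}\,\|f\|_p+2C\sqrt t\,\|Lf\|_p\qquad(t>0),
\]
and finally to optimize in $t$: taking $t$ comparable to $\|f\|_p/\|Lf\|_p$ yields \eqref{32} (with $C'$ a multiple of $C$), while if $Lf=0$ one lets $t\to\infty$ and concludes $\Gamma f=0$. I do not anticipate a genuine obstacle: the whole argument lives on the finite interval $(0,t)$, so no decay of $e^{-tL}f$ as $t\to\infty$ is needed, and each step is elementary. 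The only point requiring a little care is the density reduction in the first paragraph, i.e.\ ensuring that $\Gamma e^{-sL}f$ is a legitimate (bundle-valued) $L^2$ function so that the pointwise Cauchy--Schwarz manipulations and the a.e.\ triangle inequality in $T_xM$ are valid.
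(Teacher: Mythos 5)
Your argument is correct, but it is genuinely different from the one in the paper. The paper works with the Poisson semigroup $P_t=e^{-t\sqrt{L}}$: it writes $|\Gamma f|^2=-\int_0^\infty \frac{d}{dt}|\Gamma P_tf|^2\,dt$, integrates by parts once more to bring in $\frac{d^2}{dt^2}|\Gamma P_tf|^2\,t\,dt$, controls the boundary terms at $t=\infty$ by spectral theory, converts the hypothesis \eqref{H-L} into boundedness of the subordinated functional $G(f)=\bigl(\int_0^\infty|\Gamma P_tf|^2\,t\,dt\bigr)^{1/2}$, and finally invokes the moment inequality $\|L^{1/2}f\|_p\lesssim\|f\|_p^{1/2}\|Lf\|_p^{1/2}$ for fractional powers (Komatsu) to absorb the term $G(L^{1/2}f)$. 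You instead stay with the heat semigroup and use only the Newton--Leibniz identity $\Gamma f=\Gamma e^{-tL}f+\int_0^t\Gamma e^{-sL}(Lf)\,ds$, the averaging trick to dominate $|\Gamma e^{-tL}f|$ by $t^{-1/2}H_\Gamma(f)+t^{1/2}H_\Gamma(Lf)$ pointwise, and an optimization in $t$. What your route buys is that it is more elementary and self-contained: no subordination formula, no fractional powers of $L$, and no decay analysis as $t\to\infty$, since everything happens on the finite interval $(0,t)$; it also isolates cleanly the only two places where \eqref{H-L} is used (once for $f$, once for $Lf$). What the paper's route buys is the intermediate inequality involving $\|L^{1/2}f\|_p$ and the subordinated square function $G$, which are of independent use elsewhere (e.g.\ for Poisson-semigroup variants of the functionals). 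Your handling of the density reduction and of the bundle-valued pointwise Cauchy--Schwarz matches what the paper does implicitly, and the degenerate case $Lf=0$ is correctly dispatched by letting $t\to\infty$.
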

 
 \begin{proof} Set $P_t := e^{-t \sqrt{L}}$ the Poisson semigroup associated with $L$ and fix $f \in L^2(M) \cap D(L)$. 
By integration by parts,
$$\| \nabla P_t f \|_2^2  + \| \sqrt{V} P_t f \|_2^2 = \| L^{1/2} P_t f \|_2^2.$$
In particular, 
$$\| \Gamma P_t f \|_2 \le \frac{C}{t} \|f\|_2 \to 0 \ \text{as} \ t \to + \infty.$$
The same arguments show that $t \| \Gamma L^{1/2} P_t f \|_2 \to 0$ as $t \to +\infty$. Therefore,
\begin{eqnarray*}
| \Gamma f  |^2 &=& - \int_0^\infty \frac{d}{dt} | \Gamma P_t f |^2 dt\\
&=& - \left[ t \frac{d}{dt} | \Gamma P_t f |^2 \right]_0^\infty  + \int_0^\infty \frac{d^2}{dt^2} | \Gamma P_t f |^2\,  t\, dt \\
&\le& \int_0^\infty \frac{d^2}{dt^2} | \Gamma P_t f |^2\, t\, dt \\
&=& 2 \int_0^\infty ( |\Gamma L^{1/2} P_t f|^2 + \Gamma L P_t f. \Gamma P_tf) t\, dt =: 2 (I_1 + I_2).
\end{eqnarray*}
On the other hand, \eqref{H-L} implies by the subordination formula for the Poisson semigroup $e^{-t\sqrt{L}}$ that
\[
G (f) :=  \Big( \int_0^\infty  | \Gamma e^{-t\sqrt{L}} f(x) |^2\, t\, dt \Big)^{1/2} 
\]
is also bounded on $L^p(M)$.
Observe that $\sqrt{I_1} = G(L^{1/2} f)$ and by the Cauchy-Schwartz inequality
 \begin{eqnarray*}
 | I_2| &\le& \Big( \int_0^\infty ( |\Gamma L P_t f|^2\,  t\, dt \Big)^{1/2} \Big( \int_0^\infty ( |\Gamma  P_t f|^2\, t\, dt \Big)^{1/2} \\
 &\le& G (L f) G (f).
 \end{eqnarray*}
 Hence for any $\epsilon > 0$
 \[ |\Gamma f | \le \sqrt{2} ( G(L^{1/2}f) + \epsilon G(f) + \frac{1}{\epsilon} G(Lf)).
 \]
 Taking the $L^p$-norm yields
 \[ \| \Gamma f \|_p \le C\, ( \| L^{1/2} f \|_p + \epsilon \| f \|_p + \frac{1}{\epsilon} \| Lf \|_p).
 \]
 We chose $\epsilon = \frac{\sqrt{\| L f\|_p}}{\sqrt{\| f\|_p}}$ and we obtain 
 \[ \| \Gamma f \|_p \le C\, \Big( \| L^{1/2} f \|_p +  \| f \|_p^{1/2}\| Lf \|_p^{1/2} \Big).
 \]
 It is well known that $\| L^{1/2} f \|_p$ is bounded (up to a constant) by $\| f \|_p^{1/2}\| Lf \|_p^{1/2}$ (see, e.g., \cite{Komatsu}, Proposition~5.5). Hence 
\eqref{32} is proved for $f \in D(L) \cap L^2(M)$. In order to extend this for all
 $f \in D(L)$ we take a sequence $f_n \in L^2(M) \cap L^p(M)$ which converges in the $L^p$-norm to $f$. We apply \eqref{32} to 
 $e^{-tL}f_n$ (for $t > 0$) and then  let $n \to + \infty$ and $t \to 0$. 
 \end{proof}

The standard argument of Stein which allows to prove  that the functional
\[ 
H(f) = \Big(  \int_0^\infty | \nabla e^{-tL} f |^2 dt \Big)^{1/2} + \Big(  \int_0^\infty | \sqrt{V} e^{-tL} f |^2 dt \Big)^{1/2}
\]
is always bounded on $L^p(M)$ for $p \in (1, 2]$ can be used to prove the following  proposition \footnote{we owe this observation to Sylvie Monniaux.}
\begin{proposition}\label{prop-stein} 
Let $p \in (1,2]$.  Then
\begin{equation}\label{stein}
 \int_0^\infty \| \nabla e^{-tL} f \|_p^2\, dt + \int_0^\infty \| \sqrt{V} e^{-tL} f \|_p^2\, dt \leq C \| f\|_p^2 
 \end{equation}
 for all $f \in L^p(M)$. For $q \in [2, \infty)$ we have 
 \begin{equation}\label{HHRq}
C\, \| f \|_q^2 \le \int_0^\infty \| \nabla e^{-tL} f \|_q^2\, dt + \int_0^\infty \| \sqrt{V} e^{-tL} f \|_q^2\, dt
 \end{equation}
 for all $f \in L^q(M) \cap L^2(M)$.
 
 \end{proposition}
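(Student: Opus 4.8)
The plan is to prove the upper bound \eqref{stein} by adapting Stein's classical argument, and then to derive the lower bound \eqref{HHRq} by a duality argument. For \eqref{stein}, fix $f \in L^p(M)$ with $p \in (1,2]$ and consider the function $u(t,x) := e^{-tL}f(x)$, which solves $\partial_t u = -Lu$. The key computation is to estimate $\|\nabla u(t,\cdot)\|_p^2$ and $\|\sqrt{V}\,u(t,\cdot)\|_p^2$ pointwise-in-$t$ against a quantity whose $t$-integral telescopes. Following Stein, I would introduce the nonlinear quantity $g(t,x) := |u(t,x)|^p$ (or work with $|u|^{p-2}$ weights, handled carefully where $u$ vanishes) and exploit the differential inequality coming from the Bochner-type / Kato identity: for $p \le 2$ one has the pointwise bound
\[
\Big(|\nabla u|^2 + V|u|^2\Big)|u|^{p-2} \le \frac{1}{p-1}\Big( \tfrac{d}{dt} + L \Big)\big(|u|^p\big)
\]
in an appropriate weak sense (this is where the restriction $p \le 2$ enters, via the convexity of $s \mapsto |s|^p$ being insufficient for $p>2$). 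Integrating in $x$ kills the $L(|u|^p)$ term since $e^{-tL}$ is sub-Markovian and $\int_M L(|u|^p)\,dx \ge 0$ (or $= 0$ formally), and integrating in $t \in (0,\infty)$ telescopes the $\partial_t(|u|^p)$ term to give $\int_M |f|^p\,dx$ up to constants. Then one converts $\int_0^\infty \int_M (|\nabla u|^2 + V|u|^2)|u|^{p-2}\,dx\,dt \le C\|f\|_p^p$ into the desired $\int_0^\infty (\|\nabla u\|_p^2 + \|\sqrt{V}\,u\|_p^2)\,dt \le C\|f\|_p^2$ via Hölder's inequality in $x$, writing $\|\nabla u(t)\|_p^2 = \big(\int |\nabla u|^2 |u|^{p-2} \cdot |u|^{2-p}\,dx^{?}\big)$ — more precisely, $\|\nabla u(t)\|_p^p \le \big(\int |\nabla u|^2|u|^{p-2}\big)^{p/2}\big(\int |u|^p\big)^{1-p/2}$, combined with the uniform bound $\|u(t)\|_p \le \|f\|_p$ and an integration in $t$ that I would organize via Hölder in the $t$-variable as well (this is the standard Stein manoeuvre for passing from the square-function-with-weight to the norm-square). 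This reproduces exactly the argument behind \cite{Ouhabaz}; I would reference that paper for the technical justification of the differential inequality and merely indicate the steps.

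For the lower bound \eqref{HHRq}, I would use duality between $L^q$ and $L^p$ with $\frac1p + \frac1q = 1$, $p \in (1,2]$. Fix $f \in L^q(M)$ and $g \in L^p(M)$ with $\|g\|_p \le 1$. The idea is to write $\langle f, g\rangle$ in terms of a bilinear expression involving $\nabla e^{-tL}f$ and $\nabla e^{-tL}g$ (plus the $\sqrt V$ terms). Starting from
\[
\langle f, g \rangle = -\int_0^\infty \frac{d}{dt}\langle e^{-tL}f, e^{-tL}g\rangle\,dt = 2\int_0^\infty \langle L e^{-tL}f, e^{-tL}g\rangle\,dt
\]
(the boundary term at $t=\infty$ vanishes and at $t=0$ gives $\langle f,g\rangle$; the factor $2$ comes from splitting $L = \frac12 L + \frac12 L$ onto each side), and then using the form identity $\langle Lu, v\rangle = \langle \nabla u, \nabla v\rangle + \langle \sqrt V u, \sqrt V v\rangle$, one gets
\[
\langle f, g\rangle = 2\int_0^\infty \Big( \langle \nabla e^{-tL}f, \nabla e^{-tL}g\rangle + \langle \sqrt V e^{-tL}f, \sqrt V e^{-tL}g\rangle \Big)\,dt.
\]
Applying Cauchy–Schwarz in $(t,x)$ jointly bounds this by
\[
2\left(\int_0^\infty \|\nabla e^{-tL}f\|_q^2 + \|\sqrt V e^{-tL}f\|_q^2 \,dt\right)^{1/2}\left(\int_0^\infty \|\nabla e^{-tL}g\|_p^2 + \|\sqrt V e^{-tL}g\|_p^2\,dt\right)^{1/2},
\]
wait — I need Hölder in $x$ first at fixed $t$ then Cauchy–Schwarz in $t$: $|\langle \nabla e^{-tL}f,\nabla e^{-tL}g\rangle| \le \|\nabla e^{-tL}f\|_q\|\nabla e^{-tL}g\|_p$, and likewise for the $\sqrt V$ term, then Cauchy–Schwarz in $t$ over the sum. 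The second factor is $\le C\|g\|_p \le C$ by \eqref{stein} applied to $g$ (legitimate since $p \in (1,2]$). Taking the supremum over such $g$ yields $\|f\|_q \le C\big(\int_0^\infty \|\nabla e^{-tL}f\|_q^2 + \|\sqrt V e^{-tL}f\|_q^2\,dt\big)^{1/2}$, which is \eqref{HHRq} after squaring.

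The main obstacle I anticipate is the rigorous justification of the differential-inequality / integration-by-parts steps in the proof of \eqref{stein}: one must handle the non-smoothness of $|u|^p$ near the zero set of $u$, ensure the integrations by parts in $x$ and $t$ are valid (decay of boundary terms, integrability of $L(|u|^p)$), and control the regularity afforded only by $V \in L^1_{loc}$ and a general complete manifold $M$. These are exactly the technical points treated in \cite{Ouhabaz}, so in the write-up I would lean on that reference for the hard analysis and present here only the structural skeleton, emphasizing where $p \le 2$ is used (convexity in the upper bound) and where the roles of $p$ and $q$ swap (duality for the lower bound, which forces $q \ge 2$). A minor additional point to check in the duality step is the vanishing of the boundary term $\langle e^{-tL}f, e^{-tL}g\rangle$ as $t \to \infty$, which follows from analyticity and, say, an $L^2 \cap L^q$ density argument reducing to the case $f \in L^2 \cap L^q$, $g \in L^2 \cap L^p$ where $\|e^{-tL}f\|_2 \to 0$ if $0$ is not an eigenvalue, or more robustly from the uniform bounds plus a standard approximation.
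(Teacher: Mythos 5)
For the upper bound \eqref{stein} your sketch matches the paper's proof: reduce to $f\ge 0$ so that $(e^{-tL}f)^{p-2}$ is well defined, use H\"older in $x$ with the weight $(e^{-tL}f)^{p-2}$ to bound $\|\nabla e^{-tL}f\|_p^2 + \|\sqrt{V}e^{-tL}f\|_p^2$ by $\bigl(\int_M [|\nabla e^{-tL}f|^2 + V|e^{-tL}f|^2](e^{-tL}f)^{p-2}\,dx\bigr)\|f\|_p^{2-p}$, identify the parenthesis (up to a $p$-dependent constant, using $p(p-1)\le p$) with $-\frac{\partial}{\partial t}\int_M (e^{-tL}f)^p\,dx$, and telescope in $t$. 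Two slips in your write-up: the stated pointwise differential inequality has the wrong sign — it should read $(|\nabla u|^2 + V|u|^2)|u|^{p-2}\le -\tfrac{1}{p-1}(\partial_t + L)(|u|^p)$, since $(\partial_t + L)(|u|^p)\le 0$; with the sign as you wrote it the telescoping collapses to nothing useful — and the extra "H\"older in $t$" you mention is not needed: once H\"older in $x$ gives $\|\nabla u(t)\|_p^2\le\bigl(\int|\nabla u|^2|u|^{p-2}dx\bigr)\|f\|_p^{2-p}$ at each fixed $t$, you simply integrate $dt$ and use the telescoped bound.

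For the lower bound \eqref{HHRq} you take a genuinely different and more direct route than the paper. The paper first proves the pointwise lower bound $C\|f\|_q\le\|H(f)\|_q$ (inequality \eqref{HHR}) as a corollary of the general duality Theorem \ref{thm-reverse}, and then passes to \eqref{HHRq} by Minkowski's integral inequality in $L^{q/2}$ (valid because $q/2\ge 1$). You instead prove \eqref{HHRq} directly by writing $\langle f,g\rangle = 2\int_0^\infty \bigl[\langle\nabla e^{-tL}f,\nabla e^{-tL}g\rangle+\langle\sqrt{V}e^{-tL}f,\sqrt{V}e^{-tL}g\rangle\bigr]dt$, applying H\"older in $x$ at each $t$ and Cauchy--Schwarz in $t$, and then bounding the $g$-factor by $C\|g\|_p$ via \eqref{stein}. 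Your argument is shorter and self-contained for this specific assertion; the paper's route is heavier but also yields the strictly stronger pointwise inequality \eqref{HHR}. Both routes share the same minor technical debt regarding the vanishing of the $t\to\infty$ boundary term (equivalently, no atom of the spectral measure at $0$), which the paper also leaves implicit in the proof of Theorem \ref{thm-reverse}. Note also that, consistent with your approach, the paper itself uses exactly this direct bilinear-pairing argument for the analogous local and at-infinity lower bounds (Proposition \ref{prop-Q} and the paragraph following it).
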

\begin{proof}
It is enough to consider non-negative (and non-trivial) $f \in L^1(M) \cap L^2(M)$. Hence by irreducibility,  $e^{-t\Delta} f > 0$ (a.e. on $M$). We have 
\begin{align*}
\| \nabla e^{-tL}f\|_p^p  
&=  \int_M |\nabla e^{-t L}f |^p   (e^{-t L}f)^{\frac{p(p-2)}{2}}  (e^{-tL}f)^{\frac{p(2-p)}{2}}   dx \\
&\le \Big( \int_M |\nabla e^{-t L} f|^2 (e^{-tL} f)^{p-2} dx \Big)^{\frac{p}{2}} \Big( \int_M  (e^{-t L} f)^p dx \Big)^{\frac{2-p}{2}}\\
&\le \Big( \int_M |\nabla e^{-t L} f|^2 (e^{-tL} f)^{p-2} dx \Big)^{\frac{p}{2}} \| f \|_p^{ \frac{p(2-p)}{2}} .
\end{align*}
The same inequality holds when $\nabla$ is replaced by $\sqrt{V}$. Hence
\begin{align*}
\| \nabla e^{-tL}f\|_p^2 + \| \sqrt{V} e^{-tL}f\|_p^2 &\leq \Big( \int_M \left[ |\nabla e^{-tL} f|^2 + |\sqrt{V} e^{-tL} f|^2 \right] (e^{-t L} f)^{p-2}  dx \Big)  
\| f \|_p^{2-p} \\
&\le   C\,  \Big( \int_M - \frac{\partial}{\partial t} (e^{-tL }f)^p dx \Big) \| f\|_p^{2-p}.
\end{align*}
We integrate over $t \in [0, \tau]$ to obtain
\begin{align*}
\int_0^\tau \left[ \| \nabla e^{-tL}f\|_p^2 + \| \sqrt{V} e^{-tL}f\|_p^2 \right] \, dt 
&\leq C\,  \Big( \int_M \int_0^\tau - \frac{\partial}{\partial t} (e^{-tL }f)^p dx \Big) \| f\|_p^{2-p}\\
&= C\, \Big(  \| f \|_p^p -  \| e^{-\tau L} f \|_p^p \Big)  \| f\|_p^{2-p}\\
&\le C\, \| f\|_p^2.
\end{align*}
Letting $\tau \to \infty$ gives the desired result.\\
The proof of the lower estimate \eqref{HHRq} is postponed to the next section (see \eqref{HHRq-2}).  
\end{proof}

We have formulated the previous proposition for  Schr\"odinger operators on manifolds but it is also true for elliptic operators with non-smooth coefficients on domains. 

\section{Lower bounds}\label{sec4}
In this section we prove reverse inequalities for the Littlewood-Paley-Stein functionals. The strategy is classical and it is based on a duality argument which goes  back at least to  \cite[p. 85]{Stein2}.  

Let $L = \Delta + V$ be again a Schr\"odinger operator with a non-negative potential $V$. 
We shall need the assumption that $0$ in not an eigenvalue of $L$ as an operator on $L^2(M)$. Otherwise, if $M$ is compact without boundary, then $\nabla e^{-t\Delta} 1 = \nabla 1 = 0$, and hence a  lower estimate
cannot hold for $H_\nabla$. 

Observe that if $L f = 0$, then taking the scalar product with $f$ yields  $\nabla f = 0$ and $\sqrt{V} f = 0$. Hence, $f$ is constant. Therefore, $0$ cannot be an eigenvalue of $L$ if $M$ has infinite volume or if $V$ is not identically zero. 

The main result of this section reads as follows. 
\begin{theorem}\label{thm-reverse} Suppose that $0$ in not an eigenvalue of $L$ as an operator on $L^2(M)$. 
Let $m_k : [0, \infty) \to \CC$  in $L^2(0, \infty) \cap L^\infty(0,\infty)$ and such that
\begin{equation}\label{eq4-0}
\inf_k \| m_k \|_2^2 > 0.
\end{equation}
Let $p \in (1, \infty)$ and $q$ its conjugate number.  Suppose that there exists a constant $C > 0$ such that 
\begin{eqnarray}\label{eq4-1}
&&\Big\|  \Big( \sum_k \int_0^\infty | \nabla  m_k(tL) f_k |^2 dt \Big)^{1/2} \Big\|_p 
+ \Big\|  \Big( \sum_k \int_0^\infty | \sqrt{V} m_k(tL) f_k |^2 dt \Big)^{1/2} \Big\|_p \nonumber\\
&&  \le C\,   \Big\| \Big( \sum_k |f_k|^2 \Big)^{1/2} \Big\|_p
\end{eqnarray}
for all $f_k \in L^p(M)$.
Then there exists $C' > 0$ such that 
\begin{eqnarray}\label{eq4-2}
&& C'\, \Big\| \Big( \sum_k |g_k|^2 \Big)^{1/2} \Big\|_q\\
&&\le \Big\|  \Big( \sum_k \int_0^\infty | \nabla  m_k(tL) g_k |^2 dt \Big)^{1/2} \Big\|_q \nonumber
+ \Big\|  \Big( \sum_k \int_0^\infty | \sqrt{V} m_k(tL) g_k |^2 dt \Big)^{1/2} \Big\|_q
\end{eqnarray}
for all $g_k \in L^q(M) \cap L^2(M)$.
\end{theorem}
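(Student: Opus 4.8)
The plan is to prove \eqref{eq4-2} by the classical duality argument, exploiting the fact that $m_k(tL)$ is (up to normalization) reproducing in the $t$-variable. First I would fix $g_k \in L^q(M)$ with $\left\| \left( \sum_k |g_k|^2 \right)^{1/2} \right\|_q < \infty$ and, by density, assume the sum over $k$ is finite and each $g_k \in L^q(M) \cap L^2(M)$. The starting identity is the polarized reproducing formula: for suitable $f, g \in L^2(M)$ one has, using the spectral theorem for $L$ and $\int_0^\infty |m_k(t\lambda)|^2\,dt = \|m_k\|_2^2 / \lambda$ together with $\|\Gamma u\|_2^2 = \langle Lu, u\rangle$ with $\Gamma$ ranging over $\nabla$ and $\sqrt{V}$ (Beurling–Deny / integration by parts), an expression of $\|m_k\|_2^2\, \langle g_k, h_k\rangle$ as
\[
\|m_k\|_2^2\, \langle g_k, h_k\rangle = c \int_0^\infty \Big( \langle \nabla m_k(tL) g_k, \nabla m_k(tL) h_k\rangle + \langle \sqrt{V} m_k(tL) g_k, \sqrt{V} m_k(tL) h_k\rangle \Big)\, dt,
\]
valid because $\overline{m_k(t\lambda)} m_k(t\lambda)\lambda$ integrates against $dt$ to $\|m_k\|_2^2 \lambda^{0}$ after absorbing one power of $\lambda$ into the quadratic-form identity. (One must be slightly careful: the clean identity uses $|m_k|^2$, so it is really $\|m_k\|_2^2 \langle g_k, h_k \rangle$ on the left, which is what \eqref{eq4-0} is designed to handle.)

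Next I would sum over $k$, take $h_k \in L^p(M) \cap L^2(M)$, and estimate the right-hand side by Cauchy–Schwarz first in $t$ and $k$ (pointwise, then integrated), which gives
\[
\Big| \sum_k \|m_k\|_2^2 \langle g_k, h_k\rangle \Big| \le c \int_M \Big( \sum_k \int_0^\infty |\Gamma m_k(tL) g_k|^2 dt\Big)^{1/2} \Big( \sum_k \int_0^\infty |\Gamma m_k(tL) h_k|^2 dt\Big)^{1/2} dx
\]
summed over the two choices of $\Gamma$, and then Hölder with exponents $q,p$ turns this into the product of the $L^q$-norm of the square function applied to $(g_k)$ and the $L^p$-norm of the square function applied to $(h_k)$. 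Applying the hypothesis \eqref{eq4-1} to the latter, it is bounded by $C \left\| (\sum_k |h_k|^2)^{1/2}\right\|_p$. Using $\inf_k \|m_k\|_2^2 =: \kappa > 0$ from \eqref{eq4-0}, the left side dominates $\kappa \big| \sum_k \langle g_k, h_k\rangle \big|$ only after I am careful about the fact that the weights $\|m_k\|_2^2$ differ across $k$; the cleanest fix is to replace $g_k$ by $\|m_k\|_2^{-2} g_k$ at the outset, or equivalently to note that $\|m_k\|_2^2$ is bounded above and below so it can be pulled out. Finally, taking the supremum over all $(h_k)$ with $\left\|(\sum_k|h_k|^2)^{1/2}\right\|_p \le 1$ and invoking the duality $(L^p(M;\ell^2))^* = L^q(M;\ell^2)$ recovers $\left\|(\sum_k|g_k|^2)^{1/2}\right\|_q \le C' \big( \text{RHS of } \eqref{eq4-2}\big)$.

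The main obstacle I anticipate is twofold and purely technical. First, justifying the reproducing identity and all the integrations by parts rigorously: this requires working on a dense class (e.g. $g_k, h_k$ in $L^2 \cap L^p$ or $L^2 \cap L^q$, or in the range of a nice function of $L$) where $\int_0^\infty |m_k(tL)(\cdot)|^2\,dt$ converges absolutely and the boundary terms in the integration by parts vanish, then extending by density using \eqref{eq4-1} and the already-established continuity — exactly the kind of approximation argument used at the end of the proof of Proposition \ref{prop-multiplication}. Second, handling the two terms $\nabla$ and $\sqrt{V}$ simultaneously: they must be kept together throughout, since neither alone satisfies a reproducing identity — only the combination does, via $\|\nabla u\|_2^2 + \|\sqrt{V} u\|_2^2 = \langle Lu, u\rangle$. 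Once the bilinear identity is correctly set up with both terms, the rest is Cauchy–Schwarz, Hölder, and duality, and the hypothesis \eqref{eq4-0} is precisely what prevents the constant from degenerating.
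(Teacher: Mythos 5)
Your plan is correct and is essentially the paper's own proof: the same bilinear reproducing identity (the paper packages $\int_0^\infty \lambda |m_k(t\lambda)|^2\,dt=\|m_k\|_2^2$ via the function $M_k(\lambda)=\int_\lambda^\infty |m_k(s)|^2 ds$ and its derivative), followed by Cauchy--Schwarz in $(t,k)$, H\"older, the hypothesis \eqref{eq4-1}, duality in $L^q(M,\ell^2)$, and finally \eqref{eq4-0} to remove the weights $\|m_k\|_2^2$. One small caution: only the lower bound \eqref{eq4-0} is available (no uniform upper bound on $\|m_k\|_2$ is assumed), so of your two fixes for the $k$-dependent weights you should use the renormalization/pointwise-lower-bound one rather than ``bounded above and below.''
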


\begin{proof} We may assume without loss of generality that $k$ runs over $\{1,...,n\}$ for some finite $n$ (the constants $C$ and $C'$ are then independent of $n$). Let  $f_k \in L^p(M) \cap L^2(M)$ and $g_k \in L^q(M) \cap L^2(M)$. Set $F = (f_1,...,f_n)$  and $g = (g_1,...,g_n)$. We denote by $\langle ., . \rangle$ the usual scalar product in $\CC^n$. Then we have
\begin{eqnarray*}
&& \int_0^\infty \int_M \langle \nabla (m_1(tL)f_1,..., m_n(tL)f_n), \nabla (m_1(tL)g_1,..., m_n(tL)g_n) \rangle \, dx\, dt \\
&& + \int_0^\infty \int_M \langle \sqrt{V} (m_1(tL)f_1,..., m_n(tL)f_n), \sqrt{V} (m_1(tL)g_1,..., m_n(tL)g_n ) \rangle \, dx\, dt\\
&&= \int_0^\infty \int_M \langle  ( L m_1(tL)f_1,..., L m_n(tL)f_n), (m_1(tL)g_1,..., m_n(tL)g_n ) \rangle \, dx\, dt\\
&& = \int_0^\infty \int_M \langle (L |m_1|^2(tL)f_1,..., L |m_n|^2(tL)f_n), (g_1,..., g_n ) \rangle \, dx\, dt.
\end{eqnarray*}
The first equality is obtained by integration by parts (with respect to $x \in M$) in each coordinate and the second one uses the duality and the basic fact that the adjoint of $m_k(tL)$ is $\overline{m_k}(tL)$. For each $k$, set
\[
M_k (\lambda) := \int_\lambda^\infty | m_k(s) |^2 ds. 
\]
Then $M_k(\lambda) \to 0$ as $\lambda \to \infty$ and hence 
$M_k (tL) f \to 0$ in $L^2(M)$ as $t \to \infty$  for all $f \in L^2(M)$. In order to see this, we write by the spectral resolution
\[ \| M_k(tL) f \|_2^2 = ( |M_k|^2(tL)f, f) = \int_0^\infty |M_k(t\lambda)|^2 dE_\lambda(f,f).
\]
Since $0$ is not an eigenvalue of $L$, the latter integral is taken over $(0, \infty)$. Now, the positive measure $dE_\lambda(f,f)$ is finite, $|M_k(.)|^2$ is bounded on $(0, \infty)$ (remember $m_k \in L^2(0,\infty)$), and $M_k(t\lambda) \to 0$ as $t \to \infty$ for all $\lambda \in (0,\infty)$, then the result follows by the dominated convergence theorem.\\
Using again the spectral resolution we see that $\frac{d}{dt} M_k(tL) = -L |m_k|^2 (tL)$. From this we obtain 
\begin{eqnarray*}
&& \int_0^\infty \int_M \langle (L |m_1|^2(tL)f_1,..., L |m_n|^2(tL)f_n), (g_1,..., g_n ) \rangle \, dx\, dt\\
&&= \int_M \int_0^\infty  \langle - \frac{d}{dt} (M_1(tL)f_1,..., M_n(tL)f_n), (g_1,..., g_n ) \rangle \,dt\,  dx\\
&&= \int_M \langle ( f_1,..., f_n), (M_1(0)g_1,...,M_n(0)g_n ) \rangle dx.
\end{eqnarray*}
Using  the forgoing equalities, the Cauchy-Schwarz inequality (for $t$)  and H\"older's inequality (in $L^r(\CC^n)$) yields
\begin{eqnarray*}
&&\hspace{-.6cm} \left| \int_M \langle ( f_1,..., f_n), (M_1(0)g_1,...,M_n(0)g_n ) \rangle dx \right|\\
&&\hspace{-.6cm} \le \int_M \Big( \int_0^\infty | \nabla (m_1(tL)f_1,...m_n(tL)f_n) |^2 dt\Big)^{1/2}  \Big( \int_0^\infty | \nabla (m_1(tL)g_1,...m_n(tL)g_n) |^2 dt\Big)^{1/2} + \\
&&\hspace{-.6cm} \int_M \Big( \int_0^\infty | \sqrt{V} (m_1(tL)f_1,...m_n(tL)f_n) |^2 dt\Big)^{1/2}  \Big( \int_0^\infty | \sqrt{V} (m_1(tL)g_1,...m_n(tL)g_n) |^2 dt\Big)^{1/2}\\
&&\hspace{-.6cm}\le \left[  \Big\|  \Big( \sum_k \int_0^\infty | \nabla  m_k(tL) f_k |^2 dt \Big)^{1/2} \Big\|_p 
+ \Big\|  \Big( \sum_k \int_0^\infty | \sqrt{V} m_k(tL) f_k |^2 dt \Big)^{1/2} \Big\|_p \right] \times \\
&& \hspace{1cm} \left[  \Big\|  \Big( \sum_k \int_0^\infty | \nabla  m_k(tL) g_k |^2 dt \Big)^{1/2} \Big\|_q 
+ \Big\|  \Big( \sum_k \int_0^\infty | \sqrt{V} m_k(tL) g_k |^2 dt \Big)^{1/2} \Big\|_q \right]\\
&&\hspace{-.5cm} \le  C\,   \Big\| \Big( \sum_k |f_k|^2 \Big)^{1/2} \Big\|_p \times\\
&& \hspace{1cm} \left[  \Big\|  \Big( \sum_k \int_0^\infty | \nabla  m_k(tL) g_k |^2 dt \Big)^{1/2} \Big\|_q 
+ \Big\|  \Big( \sum_k \int_0^\infty | \sqrt{V} m_k(tL) g_k |^2 dt \Big)^{1/2} \Big\|_q \right]. 
\end{eqnarray*}
Hence, for 
$$J := \left[  \Big\|  \Big( \sum_k \int_0^\infty | \nabla  m_k(tL) g_k |^2 dt \Big)^{1/2} \Big\|_q 
+ \Big\|  \Big( \sum_k \int_0^\infty | \sqrt{V} m_k(tL) g_k |^2 dt \Big)^{1/2} \Big\|_q \right],$$
 we have proved
\[
| \int_M \langle F , (M_1(0)g_1,...,M_n(0)g_n ) \rangle dx | \le C\, \| F \|_{L^p(M, \CC^n)} J.
\]
This implies 
\[ 
\| (M_1(0)g_1,...,M_n(0)g_n ) \|_{L^q(M, \CC^n)} \le C\, J.
\]
Finally, we use \eqref{eq4-0} to finish the proof.
\end{proof} 

A particular case of  the above theorem shows that if 
\begin{equation}\label{HH}
 H(f) :=  \Big(  \int_0^\infty | \nabla e^{-tL} f |^2 dt \Big)^{1/2} + \Big(  \int_0^\infty | \sqrt{V} e^{-tL} f |^2 dt \Big)^{1/2}
\end{equation}
is bounded on $L^p(M)$, then there exists a constant $C > 0$ such that 
\begin{equation}\label{HHR}
C\, \|f \|_q \le \| H(f) \|_q
 \end{equation}
 for  $f \in L^q(M) \cap L^2(M)$. As we already mentioned in the introduction, $H$ is bounded on $L^p(M)$ for $p \in (1,2]$. Therefore, the  lower bound  \eqref{HHR} holds for all $q \in [2, \infty)$. This together with the triangle inequality for the $L^{\frac{q}{2}}$-norm 
 implies 
\begin{equation}\label{HHRq-2}
C\, \| f \|_q^2 \le  \int_0^\infty \| \nabla e^{-tL} f \|_q^2\, dt + \int_0^\infty \| \sqrt{V} e^{-tL} f \|_q^2\, dt.
\end{equation}
  This is the lower bound stated in Proposition \ref{prop-stein}.

 \medskip
 Recall from Section \ref{sec3} the local Littlewood-Paley-Stein functional 
 \[
 H^{loc}(f) := \Big(  \int_0^1 | \nabla e^{-tL} f |^2 dt \Big)^{1/2} +  \Big(  \int_0^1 | \sqrt{V} e^{-tL} f |^2 dt \Big)^{1/2}.
 \]
We have seen that the boundedness on $L^p(M)$ of the local Riesz transforms $\nabla (L+I)^{-1/2}$ and $\sqrt{V} (L+I)^{-1/2}$ imply the boundedness on $L^p(M)$ of $H^{loc}$. This together with the standard fact that the semigroup $e^{-tL}$ acts (as a contraction) on $L^p(M)$ imply that the functional
\begin{equation}\label{def-Q}
Q(f) := | e^{-L} f | + H^{loc}(f)
\end{equation} 
is also bounded on $L^p(M)$. The next proposition shows that a lower bound is also true for $Q$. More precisely,
\begin{proposition}\label{prop-Q} Let $p \in (1,\infty)$ and suppose that $H^{loc}$ is bounded on $L^p(M)$. Then there exists a constant $C > 0$ such that 
\[
C \| g \|_q \le \| Q(g) \|_q
\]
for all $g \in L^q(M) \cap L^2(M)$, $ \frac{1}{p} + \frac{1}{q} = 1$.
\end{proposition}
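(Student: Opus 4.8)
The plan is to mimic the duality argument used in the proof of Theorem~\ref{thm-reverse}, but adapted to the finite time interval $(0,1)$ and with the boundary term at $t=1$ (namely $e^{-L}f$) playing the role that the full integral $M_k(0)$ played there. Fix $f \in L^p(M)\cap L^2(M)$ and $g \in L^q(M)\cap L^2(M)$. The starting identity is the integration-by-parts formula in $t$: writing $\Gamma$ for $\nabla$ or $\sqrt V$ and using $\|\Gamma h\|_2^2 = (Lh,h)$ (valid via the quadratic form of $L$), one has
\[
\int_0^1 \int_M \big(\nabla e^{-tL}f\cdot \nabla e^{-tL}g + \sqrt V e^{-tL}f\cdot \sqrt V e^{-tL}g\big)\,dx\,dt
= \int_0^1 \int_M (L e^{-tL}f)\, e^{-tL}g\,dx\,dt,
\]
and since $\frac{d}{dt} e^{-2tL} = -2 L e^{-2tL}$ the right-hand side telescopes: $\int_0^1 (L e^{-tL}f) e^{-tL}g\,dt = \tfrac12 \int_0^1 -\frac{d}{dt}(e^{-2tL}f)\cdot g\,dt\ \text{(after pairing)} = \tfrac12\big((f,g) - (e^{-2L}f,g)\big)$. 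Hence $(f,g)$ is controlled by $(e^{-2L}f,g)$ plus the bilinear local square-function pairing.

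Next I would bound the bilinear pairing by the product of the two local Littlewood--Paley--Stein functionals via Cauchy--Schwarz in $t$ and then H\"older in $x$ (splitting the $L^p$/$L^q$ exponents), exactly as in Theorem~\ref{thm-reverse}; this gives
\[
|(f,g)| \le |(e^{-2L}f,g)| + \|H^{loc}(f)\|_p\,\|H^{loc}(g)\|_q.
\]
For the first term write $(e^{-2L}f,g) = (e^{-L}f, e^{-L}g)$ (self-adjointness of $e^{-L}$ on $L^2$) and bound it by $\||e^{-L}f|\|_p\,\||e^{-L}g|\|_q$. Using the hypothesis that $H^{loc}$ is bounded on $L^p(M)$, and the contractivity of $e^{-L}$ on $L^p$, the quantity $\|H^{loc}(f)\|_p + \||e^{-L}f|\|_p$ is $\le C\|f\|_p$; actually one wants it $\le C\|Q(f)\|_p$ — but note $\|Q(f)\|_p \le C\|f\|_p$ is exactly the boundedness statement, so by choosing $f$ to realize the dual norm of $g$ one only needs the pairing controlled by $\|f\|_p\,\|Q(g)\|_q$. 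Concretely, $|(f,g)| \le \|f\|_p\big(\||e^{-L}g|\|_q + \|H^{loc}(g)\|_q\big) \cdot C = C\|f\|_p\,\|Q(g)\|_q$, and taking the supremum over $f$ in the unit ball of $L^p\cap L^2$ (which is norming for $L^q$) gives $\|g\|_q \le C\|Q(g)\|_q$. A density argument (approximate a general $g\in L^q$ by $g_n\in L^q\cap L^2$, or apply the inequality to $e^{-sL}g$ and let $s\to 0$, as in Proposition~\ref{prop-stein}) finishes the proof.

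The routine points are the integration by parts in $t$ and the justification of the telescoping, which needs only that $\|\Gamma e^{-tL}f\|_2$ and the relevant derivatives are finite and that no boundary contribution arises at $t=0$ beyond what is kept — here, unlike the infinite-interval case, we deliberately keep the $t=1$ boundary term, which is precisely the source of the $|e^{-L}f|$ appearing in $Q$. The main obstacle I anticipate is bookkeeping the vector-valued H\"older step cleanly (the pairing involves the $L^2((0,1),dt)$-norms pointwise in $x$, then $L^p$--$L^q$ duality in $x$), together with making sure the term $(e^{-2L}f,g)$ is grouped with $\||e^{-L}g|\|_q$ rather than a bare $\|g\|_q$ — otherwise one would only recover $\|g\|_q \le C(\|g\|_q + \|H^{loc}(g)\|_q)$, which is vacuous. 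Isolating the $|e^{-L}g|$ contribution on the right and absorbing nothing on the left is the delicate structural choice; everything else parallels the already-established Theorem~\ref{thm-reverse}.
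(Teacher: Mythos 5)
Your proposal is correct and follows essentially the same approach as the paper: integrate by parts in $t$ over $(0,1)$ to get the boundary terms $\tfrac12(f,g) - \tfrac12(e^{-2L}f,g)$, rewrite $(e^{-2L}f,g)=(e^{-L}f,e^{-L}g)$, bound the bilinear term by $\int_M H^{loc}(f)\,H^{loc}(g)\,dx$ via Cauchy--Schwarz in $t$ and H\"older in $x$, use the boundedness of $H^{loc}$ (together with $L^p$-contractivity of $e^{-L}$) to get $\|Q(f)\|_p\le C\|f\|_p$, take the supremum over $f$, and conclude by density. Aside from an inconsequentially dropped factor of $2$ in one intermediate inequality, the argument matches the paper's proof.
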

 \begin{proof} Let $f \in L^p(M) \cap L^2(M)$ and $g \in L^q(M) \cap L^2(M)$. We have
 \begin{eqnarray*}
 \int_0^1 \int_M \nabla e^{-tL}f.\nabla e^{-tL}g + \sqrt{V} e^{-tL}f.\sqrt{V} e^{-tL}g \, dx\, dt &=&
 \int_0^1 \int_M (L e^{-2tL}f ) g \, dx\, dt\\
 &=& - \frac{1}{2} \int_M \int_0^1 \frac{d}{dt}( e^{-2tL}f) g \, dx\, dt\\
 &=& \frac{1}{2} \int_M f  g \, dx - \frac{1}{2} \int_M (e^{-L} f) (e^{-L}g)\, dx.
 \end{eqnarray*}
 Therefore,
 \begin{eqnarray*}
  \left|  \int_M f  g \, dx \right| &\le& \int_M |e^{-L} f| |e^{-L}g|\, dx  + 2 \int_M H^{loc}(f) H^{loc} (g) \, dx\\
  &\le& 2 \int_M ( |e^{-L} f | + H^{loc} (f) ) ( | e^{-L} g |+  H^{loc} (g) )\\
  &\le&  2 \| Q(f) \|_p \| Q(g) \|_q\\
  &\le& C\, \| f \|_p \| Q(g) \|_q.
  \end{eqnarray*}
  The  latter inequality extends by density to all $f \in L^p(M)$ and the proposition follows.
 \end{proof}
 The final observation in this section is that if the Littlewood-Paley-Stein functional at infinity
 \[ 
 H^{(\infty)}(f) := \Big(  \int_1^\infty | \nabla e^{-tL} f |^2 dt \Big)^{1/2} +  \Big(  \int_1^\infty | \sqrt{V} e^{-tL} f |^2 dt \Big)^{1/2}
 \]
 is bounded on $L^p(M)$ for some $p \in (1,\infty)$, then 
 \[
 C\, \| e^{-2L} g \|_q \le \| H^{(\infty)} g \|_q
 \]
 for $ g \in L^q(M) \cap L^2(M)$, $ \frac{1}{p} + \frac{1}{q} = 1$. The proof is very similar to the previous one. When we integrate over $t$ on $[1, \infty)$ we obtain
 \begin{eqnarray*}
 \int_1^\infty \int_M \nabla e^{-tL}f.\nabla e^{-tL}g + \sqrt{V} e^{-tL}f.\sqrt{V} e^{-tL}g \, dx\, dt &=&
 - \frac{1}{2} \int_M \int_1^\infty \frac{d}{dt}( e^{-2tL}g) f \, dx\, dt\\
 &=& \frac{1}{2} \int_M f  e^{-2L}g \, dx
 \end{eqnarray*}
 and we proceed as before.

\section{Examples and counter-examples}\label{sec5}

In this section we discuss several examples.  We also  give a short review of some known results on the Riesz transform. The boundedness of the Riesz transform implies the boundedness of the Littlewood-Paley-Stein functionals.  We shall see that the examples for which the Riesz transform is not bounded are also examples for which the Littlewood-Paley-Stein functionals are unbounded.  

\medskip
\noindent \underline{\it The Laplacian.} \\
We start with the case $L = \Delta$ the (positive) Laplace-Beltrami operator on a manifold $M$.  We give examples of manifolds for which the Riesz transform $\nabla \Delta^{-1/2}$ is bounded on $L^p(M)$ (with values in the $L^p$-space  of vector fields). This subject has been studied for many years and  it is impossible to provide comprehensive bibliography here. Recall that if the Riesz transform is bounded then the Littlewood-Paley-Stein estimates of Section \ref{sec2} are satisfied on $L^p(M)$. The lower bounds of  Section \ref{sec4} are then satisfied. 

\medskip
- {\it Manifold with non-negative Ricci curvature.}  If $M$ has non-negative Ricci curvature then it is well known that $\nabla \Delta^{-1/2}$ is bounded on $L^p(M)$ for all $p \in (1, \infty)$ (cf. \cite{Bakry}). 

\medskip
- {\it Ricci curvature bounded from below.} In this case,  the local Riesz transform  $\nabla (I+\Delta)^{-1/2}$ is bounded on $L^p(M)$ for all $p \in (1, \infty)$ (cf. \cite{Bakry}). It then follows from Section \ref{sec3} that the local Littlewood-Paley-Stein functional 
$$ H^{loc}(f) := \Big(  \int_0^1 | \nabla e^{-tL} f |^2 dt \Big)^{1/2} $$
is bounded on $L^p(M)$ for all $p \in (1, \infty)$. By Proposition~\ref{prop-Q}, the lower bound (for some $C_q > 0$) 
\begin{equation}\label{eq5-1}
C_q\, \|f \|_q \le \| e^{-\Delta} f \|_q + \Big\| \Big(  \int_0^1 | \nabla e^{-tL} f |^2 dt \Big)^{1/2} \Big\|_q
\end{equation}
holds for all $q \in (1,\infty)$. Note that \eqref{eq5-1} holds for $q \in [2, \infty)$ on any Riemannian manifold since $H_\nabla$ (and hence $H^{loc}$) is always bounded on $L^p(M)$ for $p \in (1, 2]$. 

\medskip
- {\it Manifolds with doubling and Gaussian bound.} Recall that if $M$ satisfies  \eqref{doubl2} and  \eqref{gauss} then the Riesz transform is bounded on $L^p(M)$ for all $p \in (1, 2]$ and it is weak type $(1,1)$ (cf. \cite{CoulhonDuong}). 
The case $p > 2$ is more complicate and there are counter-examples (see \cite{CoulhonDuong}). One needs extra assumptions on $M$ in order to have the Riesz transform bounded on $L^p(M)$ for $p > 2$. See for example
\cite{ACDH, Carron, ChenMagOuh, CoulhonCPAM, Guillarmou-Hassell} and the references therein. We recall the following result from \cite{ChenMagOuh}.
\begin{theorem}[\cite{ChenMagOuh}, Theorem~4.1]
Let $M$ be a complete Riemannian manifold with the doubling property \eqref{doubl2} and the Gaussian upper estimate \eqref{gauss}. Suppose  that the negative part $R^-$ of the Ricci curvature satisfies the following estimate 
\begin{equation}
\int_0^1 \Big\| \frac{|R^-|^{1/2}}{v(.,t^{1/2})^{1/{r_1}} }\Big\|_{r_1} \frac{dt}{t^{1/2}} + \int_1^\infty \Big\| \frac{|R^-|^{1/2}}{v(.,t^{1/2})^{1/{r_2}} }\Big\|_{r_2} \frac{dt}{t^{1/2}} < \infty
\end{equation} for some $r_1 > 2, r_2 > 3$. Then the Riesz transform is bounded on $L^p(M)$ for $p \in (1,r_2)$.
 \end{theorem}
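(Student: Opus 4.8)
We only sketch the argument; full details are in \cite{ChenMagOuh}. For $p\in(1,2]$ there is nothing to do beyond citing the Coulhon--Duong theorem \cite{CoulhonDuong}: the doubling property \eqref{doubl2} and the Gaussian bound \eqref{gauss} alone already give boundedness of $\nabla\Delta^{-1/2}$ on $L^p(M)$, and the curvature hypothesis is irrelevant there. So the whole issue is the range $p\in(2,r_2)$, and for this the plan is to pass to the Hodge--de Rham Laplacian $\vec\Delta=dd^*+d^*d$ on $1$-forms, using the intertwining $d\,e^{-t\Delta}=e^{-t\vec\Delta}d$, i.e.\ $d\Delta^{-1/2}=\vec\Delta^{-1/2}d$ on a suitable core. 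The Bochner--Weitzenböck identity reads $\vec\Delta=\overline\nabla^{\,*}\overline\nabla+\mathrm{Ric}$, with $\overline\nabla$ the Levi-Civita connection on $T^*M$ and $\mathrm{Ric}$ acting as a pointwise symmetric endomorphism; writing $\mathrm{Ric}\ge -R^-\,\mathrm{Id}$ we would set $\vec\Delta_+:=\overline\nabla^{\,*}\overline\nabla+\mathrm{Ric}+R^-\mathrm{Id}\ge0$. Since the zeroth order term of $\vec\Delta_+$ is nonnegative, Kato's inequality $\Delta|\omega|\le|\overline\nabla^{\,*}\overline\nabla\omega|$ yields the pointwise domination $|e^{-t\vec\Delta_+}\omega|\le e^{-t\Delta}|\omega|$, so $e^{-t\vec\Delta_+}$ inherits from \eqref{gauss} a Gaussian upper bound on its kernel norm and, with \eqref{doubl2}, the Calder\'on--Zygmund toolbox (in particular the Coulhon--Duong argument) applies to it verbatim.

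The heart of the argument would be the transfer from $\vec\Delta_+$ to $\vec\Delta$ through the Duhamel perturbation formula
\[
e^{-t\vec\Delta}=e^{-t\vec\Delta_+}+\int_0^t e^{-(t-s)\vec\Delta_+}\,R^-\,e^{-s\vec\Delta}\,ds,
\]
iterated into a series $e^{-t\vec\Delta}=\sum_{k\ge0}T_k(t)$ whose $k$-th term is a $k$-fold ordered time integral of alternating factors of the form $e^{-s\vec\Delta_+}$ and multiplication by $R^-$. One would estimate each factor $R^-$ in $L^{r_1}$ on the short-time pieces and in $L^{r_2}$ on the long-time pieces, paired against the Gaussian kernel of $\vec\Delta_+$; after H\"older's inequality and \eqref{doubl2}, the volume normalisations $v(\cdot,t^{1/2})^{-1/r_i}$ occurring in the hypothesis are exactly what converts these into scalar estimates, and the finiteness of $\int_0^1\cdots+\int_1^\infty\cdots$ makes the resulting series summable uniformly in $t$. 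This should produce, for the families $\sqrt t\,\nabla e^{-t\Delta}$ and $\sqrt t\,d^*e^{-t\vec\Delta}$, the local $L^2$-$L^{r_1}$ and global $L^2$-$L^{r_2}$ off-diagonal (weighted) estimates that substitute for the pointwise gradient bounds available under a genuine lower Ricci bound.

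To conclude we would assemble the Riesz transform as in \eqref{Chen}. The local Riesz transform $d(\Delta+I)^{-1/2}$ should be bounded on $L^p(M)$ for every $p\in(1,\infty)$: on local scales the short-time ($r_1>2$) perturbation estimates feed a Bakry-type/Calder\'on--Zygmund argument exactly as in the case of Ricci bounded below (\cite{Bakry}). The Riesz transform at infinity $d\Delta^{-1/2}e^{-\Delta}$ should be bounded on $L^p(M)$ for $p\in(2,r_2)$: the global $L^2$-$L^{r_2}$ off-diagonal decay together with $L^2$-boundedness gives $L^p$-boundedness for $2<p<r_2$ by the Blunck--Kunstmann/Auscher-type extrapolation principle used for Riesz transforms in \cite{ACDH}. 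Combining the two through \eqref{Chen} with the $p\le2$ case finishes the proof. The hard part will be the middle step: bookkeeping the multiple-integral perturbation series so that the two scalar integral conditions on $R^-$ --- with the precise exponents $r_1>2$, $r_2>3$ and the precise volume weights --- both sum it and yield off-diagonal estimates compatible with the doubling geometry; the threshold $p<r_2$ is dictated precisely by where this summation, and the extrapolation it feeds, break down.
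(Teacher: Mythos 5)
This theorem is not proved in the paper at all: it is quoted verbatim from \cite{ChenMagOuh} (Theorem~4.1), so there is no internal proof to compare your sketch against, and your argument has to be judged against the cited reference. On that score your outline does follow the right strategy --- the $p\le 2$ case is indeed just \cite{CoulhonDuong}, and for $p>2$ one does pass to the Hodge--de Rham Laplacian $\vec{\Delta}$ via $d e^{-t\Delta}=e^{-t\vec{\Delta}}d$, dominate the semigroup of the nonnegative part $\vec{\Delta}_+$ by $e^{-t\Delta}$ through Kato-type inequalities, and control $e^{-t\vec{\Delta}}$ by a Duhamel series in which $R^-$ is estimated in $L^{r_1}$ for small times and $L^{r_2}$ for large times, the two integral conditions being exactly what sums the series. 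Two caveats. First, your assembly differs from \cite{ChenMagOuh}: they do not split into a local and an at-infinity Riesz transform as in \eqref{Chen}; instead they prove that $e^{-t\vec{\Delta}}$ is \emph{uniformly bounded} on $L^{p_0}(\Lambda^1T^*M)$ for $p_0<r_2$, and feed this into a separate criterion (their Theorem~3.1, an adaptation of the \cite{ACDH} machinery) which converts uniform $L^{p_0}$-boundedness of the form semigroup, under doubling and the Gaussian bound, into $L^p$-boundedness of $\nabla\Delta^{-1/2}$ for $2<p<p_0$; your local/global splitting could probably be made to work but is not what the reference does, and the claim that the local Riesz transform is bounded for \emph{all} $p\in(1,\infty)$ under these hypotheses is not something you have justified. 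Second, and more importantly, what you present is a plan rather than a proof: the entire quantitative content --- the bookkeeping of the iterated Duhamel integrals, the role of the weights $v(\cdot,t^{1/2})^{-1/r_i}$, and the reason the threshold is $r_2$ (and why $r_2>3$ is needed at large times) --- is announced but not carried out, and your statement of Kato's inequality ($\Delta|\omega|\le|\overline{\nabla}^{\,*}\overline{\nabla}\omega|$) is not the correct form of the domination lemma (one needs $|\omega|\,\Delta|\omega|\le\langle\overline{\nabla}^{\,*}\overline{\nabla}\omega,\omega\rangle$ distributionally). As a reconstruction of the strategy of \cite{ChenMagOuh} it is sound; as a self-contained proof it is incomplete precisely at the step you yourself identify as the hard one.
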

As a consequence, Theorems~\ref{thm-hol} and \ref{thm-mult} apply to $L = \Delta$ on $L^p(M)$ for $p \in (2,r_2)$. 

\medskip
- $M_n = \RR^n \# \RR^n$ with $n \ge 2$.  We consider the manifold $M_n$ which  consists of two copies of $\RR^n \backslash B(0,1)$ endowed with the euclidean metrics and smoothly glued along the unit balls. It is proved in \cite{CoulhonDuong} that for such manifold, the  Riesz transform is unbounded on $L^p(M_n)$ for $p > n$. We also refer to  \cite{Carron} and \cite{Guillarmou-Hassell} for more  general and  precise results.  In particular, it is proved in \cite{Carron} that the Riesz transform is bounded on $L^p(M_n)$ for $p \in (1, n)$ and this is sharp. Therefore, the  Littlewood-Paley-Stein estimates of Section \ref{sec2} are satisfied on $L^p(M_n)$ for $p \in (1, n)$. Similarly to the Riesz transform, this interval is optimal in the sense that $H_\nabla$ in \eqref{H-Ga}  is unbounded on $L^p(M_n)$ for $p > n$. In order to see this, recall that  $M_n$ satisfies the global Sobolev inequality
\begin{equation}\label{SobolevGlobal}
|f(x) - f(y)| \leq C\, d(x,y)^{1-n/p} \| \nabla f\|_p. 
\end{equation}
It is also known that there exist positive constants $c$ and $C$ such that $c r^n \leq v(x,r) \leq C r^n$ uniformly in $r > 0$ and $x \in M_n$. 
Now, if the Littlewood-Paley-Stein functional is bounded on $L^p(M)$ for some $p > n$, then it follows from Proposition~\ref{prop-multiplication} and the analyticity of the semigroup that
$$\| \nabla e^{-t\Delta}  f \|_p \leq \frac{C}{t^{1/2}} \|f\|_p. $$ 
We apply this  inequality with $f = p_t(.,y)$ (the heat kernel associated with $\Delta$)  and notice that  $p_t(.,y) = e^{-\frac{t}{2} \Delta} p_{\frac{t}{2}}(.,y)$ and then  combine this with \eqref{SobolevGlobal} to obtain \begin{align*}
|p_t(x,y) - p_t(y,y)| &\leq Cd(x,y)^{1-n/p}\| \nabla p_t(.,y) \|_p \\
&\leq Cd(x,y)^{1-n/p} \frac{t^{-1/2 + n/2p}}{v(y,t^{1/2})}.
\end{align*}
A well known  chain argument allows to obtain from  this inequality a Gaussian lower bound 
$$p_t(x,y) \geq C \frac{e^{-c\frac{d(x,y)^2}{t}}}{v(y,t^{1/2})}.$$
This lower bound is not true for $M_n$. We refer to  \cite{CoulhonDuong} for the details and additional information.\\
Note that $M_n$ has Ricci curvature bounded from below. Therefore, the local  Littlewood-Paley-Stein functional is bounded on $L^p(M_n)$ for all $p \in (1, \infty)$. It is then the Littlewood-Paley-Stein at infinity which is not bounded on $L^p(M_n)$ for $p > n$.

\medskip
\noindent\underline{Schr\"odinger operators.} 

\medskip
- {\it Potentials in the Reverse H\"older class.}   We consider $L = \Delta + V$ on $L^p(\RR^n)$ for some $n \ge 3$.  We assume that  the non-negative potential $V$ belongs to the Reverse H\"older class $B_q$, that is, there exists a constant $C > 0$ such that for every ball $B$ in $\mathbb{R}^n$, 
\begin{equation}\label{ReverseHolder}
\frac{1}{|B|}\int_B V^q dx \leq C \Big(\frac{1}{|B|}\int_B V dx\Big)^q.
\end{equation}  
It is known that this property self-improves in the sense that  there exists $\epsilon > 0$ such that $V \in B_{q+\epsilon}$. It is proved in  \cite{Shen} that if $V \in B_q$ for some $n/2 \leq q < n$, then the Riesz transform $\nabla L^{-1/2}$ is bounded on $L^p(\RR^n)$ for $1 < p \leq p_0$ where $\frac{1}{p_0} = \frac{1}{q} - \frac{1}{n}$. This result was improved in \cite{BenAli} by considering the cases $n < 3$ or $q \geq n$ and also the boundedness of $\sqrt{V} L^{-1/2}$. More precisely, it is proved in \cite{BenAli} that
\begin{theorem}\label{thm-AbA}
1- If  $V \in B_q$ for some $q > 1$, then $\nabla L^{-1/2}$ and $\sqrt{V} L^{-1/2}$ are bounded on $L^p(\RR^n)$ for 
$p \in (1, 2 (q + \epsilon))$.\\
2- If $V \in B_q$ for some $q \ge n/2$ and $q > 1$, then $\nabla L^{-1/2}$ is bounded on $L^p(\RR^n)$ for  all $p \in (1, q^* + \epsilon)$ if $q < n$ and for all $p \in (1,\infty)$ if $q \ge n$. Here $q^* = \frac{nq}{n-q}$.
\end{theorem}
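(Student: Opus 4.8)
The full proof is the content of \cite{BenAli} (building on \cite{Shen}); here I only outline the strategy, which I would organise around Shen's auxiliary critical-radius function $m(\cdot,V)$, built from the Fefferman--Phong inequality and recording the scale at which the zero-order term of $L=\Delta+V$ starts to dominate. First I would exploit the hypothesis $V\in B_q$: by Gehring's lemma it self-improves to $V\in B_{q+\epsilon}$ for some $\epsilon>0$ (the source of every ``$+\epsilon$'' in the statement), and $B_q$ allows one to bound local $L^q$-averages of $V$ by its $L^1$-averages, which makes $m(\cdot,V)$ slowly varying and yields the usual scale-comparison estimates across dyadic annuli.

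Next I would assemble the semigroup estimates that drive the machinery: the Gaussian upper bound for $e^{-tL}$ (immediate from domination by $e^{t\Delta}$, since $V\ge 0$), together with $L^2$ off-diagonal (Davies--Gaffney) bounds for $e^{-tL}$, $\sqrt t\,\nabla e^{-tL}$ and $\sqrt t\,\sqrt V\,e^{-tL}$, with the $t$-decay of these families sharpened by the gain coming from $m(\cdot,V)$. The Fefferman--Phong inequality also gives the $L^2$ bound $\|\nabla L^{-1/2}f\|_2+\|\sqrt V\,L^{-1/2}f\|_2\le C\|f\|_2$ directly, so on the range $1<p\le 2$ both Riesz transforms can be handled comparatively cheaply by a Calder\'on--Zygmund / Coulhon--Duong argument based on the Gaussian bound (or by duality); the real difficulty is $p>2$.

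For $p>2$ I would invoke the $L^p$-boundedness criterion for non-integral singular operators (the Blunck--Kunstmann / Auscher method): to prove that $T=\nabla L^{-1/2}$, respectively $T=\sqrt V\,L^{-1/2}$, is bounded on $L^p$ for $p<p_0$, it suffices to establish, for every ball $B=B(x_0,r)$ and some fixed integer $M$, a reverse-H\"older-type bound
\[
\Big(\frac{1}{|B|}\int_B\big|\,T(I-e^{-r^2L})^{M}f\,\big|^{p_0}\,dx\Big)^{1/p_0}\ \le\ C\sum_{j\ge 1}2^{-\sigma j}\Big(\frac{1}{|2^{j}B|}\int_{2^{j}B}|f|^{2}\,dx\Big)^{1/2}.
\]
The exponent $p_0$ is produced by a higher-integrability step: for $T=\sqrt V\,L^{-1/2}$ it is read off directly from $V\in B_{q+\epsilon}$, yielding $p_0=2(q+\epsilon)$; for $T=\nabla L^{-1/2}$ one runs a Caccioppoli inequality for local solutions of $\partial_t u+Lu=0$ (and of $Lu=0$) and applies the Gehring--Meyers self-improvement to gain a reverse-H\"older exponent for $|\nabla u|$, and when $q\ge n/2$ a Sobolev embedding (following Shen) converts the integrability of $V$ into the exponent $q^*+\epsilon$, $q^*=nq/(n-q)$, and into all $p\in(1,\infty)$ when $q\ge n$, where $V$ is essentially locally bounded. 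The summable tail factor $2^{-\sigma j}$ comes from the Davies--Gaffney decay of the previous step.

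The hard part is precisely this last step: establishing the local reverse-H\"older estimates for $\nabla e^{-tL}f$ and $\sqrt V\,e^{-tL}f$ with the sharp exponent. This requires combining a Caccioppoli/Meyers argument adapted to the Schr\"odinger operator with careful bookkeeping of how $m(\cdot,V)$ and the $B_q$ property propagate across dyadic scales, and then checking that the error terms decay fast enough to be summed — and it is there that the precise numerology $2(q+\epsilon)$ versus $q^*+\epsilon$ gets decided.
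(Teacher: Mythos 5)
This statement is not proved in the paper at all: it is quoted verbatim as Theorem~1.1--1.2 of the cited reference \cite{BenAli} (Auscher--Ben~Ali), serving only as background for the examples in Section~\ref{sec5}, so there is no internal proof to compare your proposal against. Your outline is a faithful description of the strategy actually used in \cite{BenAli} and \cite{Shen} --- self-improvement of the $B_q$ condition, Fefferman--Phong and the critical radius function $m(\cdot,V)$, the Coulhon--Duong/Calder\'on--Zygmund treatment of $1<p\le 2$, and for $p>2$ the Auscher/Blunck--Kunstmann criterion reducing matters to reverse-H\"older estimates for $\nabla e^{-tL}f$ and $\sqrt{V}e^{-tL}f$ with exponents $2(q+\epsilon)$ and $q^*+\epsilon$ respectively. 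One small imprecision: for $q\ge n$ the $L^\infty$ reverse-H\"older bound on $|\nabla u|$ for local solutions comes from Morrey-type embedding of $W^{1,q}_{loc}$ into $L^\infty_{loc}$ (Shen's argument), not from $V$ being essentially locally bounded, which $B_q$ with $q\ge n$ does not by itself guarantee; this does not affect the overall correctness of the sketch.
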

We apply Theorems~\ref{thm-hol} and \ref{thm-mult} to obtain general  Littlewood-Paley-Stein estimates on $L^p(\RR^n)$ for 
$p$ in one of  intervals given in Theorem~\ref{thm-AbA}. Their reverse inequalities proved in Section \ref{sec4} hold on the dual space.

It is also proved in \cite{Shen} that the above range is optimal for the boundedness of the Riesz transform. One may then ask whether this range is optimal for the boundedness of the Littlewood-Paley-Stein functional as well. This is indeed the case. 
\begin{proposition}\label{opt-V}
There exists  $V \in B_q$ with  $n/2 \leq q < n$ such that the Littlewood-Paley-Stein functional $H$ is not bounded  on $L^p(\RR^n)$ for any 
$p > q^* + \epsilon$.
\end{proposition}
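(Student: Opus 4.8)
The plan is to realise the counterexample with a power-type singular potential and to turn the boundedness of $H$ against itself via Proposition~\ref{prop-multiplication}. Fix $\alpha\in(1,2)$ and set $V(x):=|x|^{-\alpha}$ on $\RR^n$ with $n\geq 3$, so that $0\leq V\in L^1_{\mathrm{loc}}(\RR^n)$ since $\alpha<n$. First I would check that $V$ lies in the reverse H\"older class $B_q$ precisely when $q<n/\alpha$: on a ball disjoint from the origin $V$ is comparable to a constant and \eqref{ReverseHolder} is trivial, while on a ball of radius $r$ centred at (or meeting) the origin both $\frac1{|B|}\int_B V^q$ and $\bigl(\frac1{|B|}\int_B V\bigr)^q$ are comparable to $r^{-\alpha q}$ as soon as $\alpha q<n$. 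Taking $q:=\frac{2n}{\alpha+2}$ gives $q<n/\alpha$ (equivalently $\alpha<2$), hence $V\in B_q$; moreover $n/2<q<n$ and $q^*=\frac{nq}{n-q}=\frac{2n}{\alpha}$, and choosing $\epsilon\in\bigl(0,\tfrac n\alpha-q\bigr)$ we also have $V\in B_{q+\epsilon}$. It then suffices to prove that $H$ is unbounded on $L^p(\RR^n)$ for every $p\geq 2n/\alpha=q^*$, which a fortiori covers all $p>q^*+\epsilon$.

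Next I would argue by contradiction. Suppose $H$ is bounded on $L^p(\RR^n)$ for some $p\geq 2n/\alpha$. Since $\bigl(\int_0^\infty|\sqrt V\,e^{-tL}f|^2\,dt\bigr)^{1/2}\leq H(f)$ pointwise, the $\sqrt V$-functional is bounded on $L^p$, and Proposition~\ref{prop-multiplication} gives $\|\sqrt V\,f\|_p\leq C\|Lf\|_p^{1/2}\|f\|_p^{1/2}$ for all $f$ in the $L^p$-domain of $L$. Applying this with $f=e^{-tL}g$ and using analyticity of $(e^{-tL})$ on $L^p$ (so $\|Le^{-tL}g\|_p\leq Ct^{-1}\|g\|_p$) yields
\[
\bigl\|\,\sqrt V\,e^{-tL}g\,\bigr\|_p\ \leq\ C\,t^{-1/2}\,\|g\|_p,\qquad t>0,\ g\in L^p(\RR^n),
\]
so in particular $\sqrt V\,e^{-L}$ is bounded on $L^p(\RR^n)$. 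I would contradict this at $t=1$ with $g=\mathbf{1}_{B(0,1)}\in L^p(\RR^n)$. Writing $u:=e^{-L}\mathbf{1}_{B(0,1)}$, I claim $u\geq c_0>0$ on some ball $B(0,\rho)$; granting this,
\[
\bigl\|\,\sqrt V\,e^{-L}\mathbf{1}_{B(0,1)}\,\bigr\|_p^p=\int_{\RR^n}|x|^{-\alpha p/2}\,u(x)^p\,dx\ \geq\ c_0^{\,p}\int_{B(0,\rho)}|x|^{-\alpha p/2}\,dx\ =\ +\infty,
\]
because $\alpha p/2\geq n$ when $p\geq 2n/\alpha$. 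This contradicts the displayed estimate, so $H$ is not bounded on $L^p(\RR^n)$, which proves the proposition.

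The one step needing care is the lower bound $u=e^{-L}\mathbf{1}_{B(0,1)}\geq c_0>0$ near the origin; everything else is routine. I would obtain it from the Feynman--Kac formula $u(x)=\EE_x\bigl[\exp\bigl(-\int_0^1|B_s|^{-\alpha}\,ds\bigr)\mathbf{1}_{B(0,1)}(B_1)\bigr]$: for $\alpha<2$, a Gaussian estimate for the free heat kernel together with a rearrangement inequality give $\sup_x\EE_x\bigl[\int_0^1|B_s|^{-\alpha}\,ds\bigr]=c'\int_0^1 s^{-\alpha/2}\,ds<\infty$, hence $\int_0^1|B_s|^{-\alpha}\,ds<\infty$ almost surely, and Markov's inequality makes the probability of $\{\int_0^1|B_s|^{-\alpha}\,ds\leq K\}\cap\{B_1\in B(0,1)\}$ bounded below uniformly for $x\in B(0,\rho)$ once $K$ is large; thus $u(x)\geq e^{-K}\cdot(\text{that probability})\geq c_0>0$. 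Equivalently, one may note that $|x|^{-\alpha}$ belongs to the Kato class $K_n$ for $\alpha<2$ (indeed $\sup_x\int_{|x-y|<r}|y|^{-\alpha}|x-y|^{2-n}\,dy\lesssim r^{2-\alpha}\to0$), so $\Delta+V$ enjoys a local parabolic Harnack inequality and a two-sided local Gaussian heat kernel bound.

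Finally I would observe that this is consistent with the positive results of \cite{Shen,BenAli}: the obstruction sits at $p=2n/\alpha$, which for $\alpha<2$ is strictly below $\frac{n}{\alpha-1}$ (the sharp threshold for $\nabla L^{-1/2}$); in fact the argument shows the stronger fact that already the $\sqrt V$-part of $H$ is unbounded on $L^p$ for every $p\geq 2n/\alpha$, in the spirit of the negative results in \cite{Ouhabaz}.
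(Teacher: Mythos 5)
Your proof is correct, but it takes a genuinely different route from the paper's. Both arguments use the same family of power potentials and both run the assumed boundedness of $H$ through Proposition~\ref{prop-multiplication}, but they attack different halves of the functional. The paper tests the \emph{gradient} bound $\|\nabla f\|_{p_0}^2\le C\|f\|_{p_0}\|Lf\|_{p_0}$ on an explicit singular $L$-harmonic function: it exhibits a power series $v$ with $\Delta v+Vv=0$, truncates it to $u=\phi v$, notes that $u$ and $Lu=g$ are bounded with compact support while $|\nabla u|\sim |x|^{-n/p_0}$ near the origin, and gets a contradiction at $p_0=q_0^{*}=\frac{n}{\alpha-1}$, the sharp Shen threshold for $\nabla L^{-1/2}$. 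You instead test the \emph{multiplication} bound $\|\sqrt V f\|_p\le C\|Lf\|_p^{1/2}\|f\|_p^{1/2}$ on the heat extension of an indicator; the only nontrivial input is the lower bound $e^{-L}\mathbf{1}_{B(0,1)}\ge c_0$ near the origin, which you correctly reduce to the Kato-class property of $|x|^{-\alpha}$ for $\alpha<2$ via Feynman--Kac and the rearrangement bound $\sup_x\EE_x\bigl[|X_s|^{-\alpha}\bigr]\lesssim s^{-\alpha/2}$. Your threshold $2n/\alpha$ is strictly smaller than $n/(\alpha-1)$ for $\alpha\in(1,2)$, so you in fact prove a stronger failure of the full functional $H$; and your choice $q=\frac{2n}{\alpha+2}$ is precisely what makes $q^{*}=2n/\alpha$ coincide with the threshold $2\sup\{q':V\in B_{q'}\}$ for $\sqrt V L^{-1/2}$ in part~1 of Theorem~\ref{thm-AbA}, so there is no conflict with the positive results. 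What you lose relative to the paper is any information about the gradient part alone: your argument says nothing about $H_\nabla$, whereas the paper's shows that already $H_\nabla$ fails at the Shen exponent. What you gain is a softer construction (no explicit special solution, only a local heat-kernel lower bound, for which your probabilistic sketch is adequate) and the sharper conclusion that the $\sqrt V$-part of $H$ fails as soon as $p\ge 2n/\alpha$.
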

\begin{proof} We follow exactly the same arguments as in \cite[Section 7]{Shen}. Let $q_0 > n/2$ and set 
$V(x):= \frac{1}{|x|^{n/q_0}}$.  Then  $V \in B_q$ for all $q <q_0$. Therefore, the Littlewood-Paley-Stein function $H$ 
 is bounded on $L^{p}(\RR^n)$ for all $p$ such that $\frac{1}{p} > \frac{1}{q_0} - \frac{1}{n}$.  We show that it is false for $p= p_0$ with 
 $\frac{1}{p_0} = \frac{1}{q_0} - \frac{1}{n}$. Let $v$ be the function defined by 
 $$v(x) := \sum_{m=0}^\infty \frac{(\frac{1}{\mu})^{2m}|x|^{\mu m}}{m! \Gamma(\frac{n-2}{\mu}+m+1)}$$
 with $\mu = 2 - \frac{n}{q_0}$.  One has  by a direct computation 
 $$ \Delta v + V v  = 0.$$
Set $u:= \phi v$ where $\phi$ is a smooth non-negative compactly supported function with $\phi(x) = 1$ if $|x| \leq 1$. We have $$ \Delta u + V u = -2 \nabla \phi . \nabla v + v \Delta \phi.$$ 
Set  $ g := \Delta u + V u = -2 \nabla \phi . \nabla v + v \Delta \phi$.  Suppose that $H$ is bounded on $L^{p_0}(\RR^n)$. Then
$$ \|\nabla f\|_{p_0}^2  \leq C\, \|f\|_{p_0} \| L f\|_{p_0}$$ 
by Proposition~\ref{prop-multiplication}.  Therefore,
\begin{align*}
\| \nabla u \|_{p_0}^2  &= \|\nabla L^{-1} g \|_{p_0}^2 \\
&\le C\,  \| g \|_{p_0}  \| u \|_{p_0} < \infty
\end{align*}
since $u$ and $g$ are  in $L^{p_0}$ (they are bounded and compactly supported).  But  $\nabla u$ is not in $L^{p_0}(\RR^n)$  because $|\nabla u| \sim \frac{1}{|x|^{n/{p_0}}}$ as $x \sim 0$.  \end{proof}

\medskip
- {\it Schr\"odinger operators on manifolds.}  Riesz transforms associated with Schr\"odinger operators have been also studied on Riemannian manifolds $M$. As we already mentioned before, if $M$ satisfies \eqref{doubl} and \eqref{gauss} then $\nabla L^{-1/2}$ and $\sqrt{V} L^{-1/2}$ are bounded on $L^p(M)$ for $p \in (1,2]$. Here the potential $V$ is non-negative and locally integrable. See \cite{DOY} where this is stated on $\RR^N$ but the proof works on manifolds having \eqref{doubl} and \eqref{gauss}. \\
The case $p > 2$ is again  complicate (even if $M = \RR^N$). We recall the following result which deals also with potentials having a non-trivial negative part.  
\begin{theorem}[\cite{AO}, Theorem 3.9]
Suppose that  $M$ satisfies \eqref{doubl2} and  \eqref{gauss}. Suppose in addition that $V^-$ is subcritical with rate $\alpha \in (0, 1)$, i.e.,  for all suitable $f$ in $L^2(M)$ we have
\begin{equation}\label{SC}
\int_{M} V^- f^2 dx \leq \alpha \int_{M} (|\nabla f|^2 + V^+ f^2 )dx.
\end{equation}Assume there exist $r_1, r_2 > 2$ such that 
\begin{equation}\label{integ}
\int_0^1 \Big\| \frac{|V|^{1/2}}{v(.,s^{1/2})^{1/{r_1}} }\Big\|_{r_1} \frac{ds}{s^{1/2}} + \int_1^\infty \Big\| \frac{|V|^{1/2}}{v(.,s^{1/2})^{1/{r_2}} }\Big\|_{r_2} \frac{ds}{s^{1/2}} < \infty.
\end{equation}
Let $r=  \inf(r_1,r_2)$. If $N \leq 2$, then the operators $\Delta^{1/2} L^{-1/2}$ and $V^{1/2} L^{-1/2}$ are bounded on $L^p$ for $p\in (1,r)$. If $N > 2$, the same operators are bounded on $L^p$ for $p \in (p_0', \frac{p_0 r}{p_0 +r})$ where $p_0 = \frac{N}{N-2}\frac{2}{1-\sqrt{1-\alpha}}$. In particular, if the Riesz transform $\nabla \Delta^{-1/2}$  is bounded on $L^p$ with $p$ in this range, then $\nabla L^{-1/2}$ is also bounded. 
\end{theorem}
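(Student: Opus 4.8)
The plan is to prove the boundedness on $L^p(M)$ of the two \emph{scalar} operators $\Delta^{1/2}L^{-1/2}$ and $|V|^{1/2}L^{-1/2}$ directly; the final ``in particular'' is then immediate from the factorisation $\nabla L^{-1/2}=(\nabla\Delta^{-1/2})(\Delta^{1/2}L^{-1/2})$, a composition of operators bounded on $L^p$. First I would dispose of the negative part. By \eqref{SC} one has $L=\Delta+V^+-V^-\ge(1-\alpha)(\Delta+V^+)\ge 0$, so $L$ is a non-negative self-adjoint operator with form domain equal to that of $\Delta+V^+$, and on $L^2(M)$ the operators $\Delta^{1/2}L^{-1/2}$, $(V^+)^{1/2}L^{-1/2}$ and $(V^-)^{1/2}L^{-1/2}$ are bounded with norm $\le(1-\alpha)^{-1/2}$. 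The subcriticality also governs the $L^p$-theory of the semigroup: by the ground-state (weighted $h$-transform) estimates for Schr\"odinger operators with a subcritical negative potential, combined with the dimension-$N$ ultracontractivity coming from \eqref{doubl2} and \eqref{gauss}, the semigroup $e^{-tL}$ is bounded on $L^p(M)$ for $p$ in the interval $(p_0',p_0)$ with $p_0=\frac{N}{N-2}\cdot\frac{2}{1-\sqrt{1-\alpha}}$ when $N>2$ (and on all of $L^p(M)$, $p\in(1,\infty)$, when $N\le2$, where no such obstruction is present), and on that range of $p$ it inherits from \eqref{gauss} a Gaussian-type (weighted) kernel bound.

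The analytic core is an $L^p$ bound for $|V|^{1/2}e^{-tL}$ with an \emph{integrable} profile. Writing $L^{-1/2}=c\int_0^\infty e^{-tL}\,\frac{dt}{\sqrt t}$ and using Minkowski's integral inequality,
\[
\big\|\,|V|^{1/2}L^{-1/2}f\,\big\|_p\;\le\;c\int_0^\infty \big\|\,|V|^{1/2}e^{-tL}f\,\big\|_p\,\frac{dt}{\sqrt t},
\]
so it suffices to estimate $\||V|^{1/2}e^{-tL}f\|_p$. Here I would combine H\"older with the weighted ultracontractivity $\big\|v(\cdot,\sqrt t)^{1/r_i}e^{-tL}f\big\|_{p_0}\lesssim\|f\|_p$ (from the kernel bound for $e^{-tL}$), splitting exponents as $\frac{1}{p}=\frac{1}{p_0}+\frac{1}{r_i}$, to get
\[
\big\|\,|V|^{1/2}e^{-tL}f\,\big\|_p\;\lesssim\;\Big\|\,\frac{|V|^{1/2}}{v(\cdot,\sqrt t)^{1/r_i}}\,\Big\|_{r_i}\,\|f\|_p ,
\]
using $r_1$ for $t\le1$ and $r_2$ for $t\ge1$. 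Feeding this into the previous display, the two integrals in \eqref{integ} appear (after the obvious change of variables), which yields $\||V|^{1/2}L^{-1/2}f\|_p\le C\|f\|_p$; the constraint $\frac{1}{p}=\frac{1}{p_0}+\frac{1}{r}$ with $r=\inf(r_1,r_2)$ is precisely what produces the upper endpoint $\frac{p_0 r}{p_0+r}$, while $p>p_0'$ is forced by the mapping properties of $e^{-tL}$. The operator $\Delta^{1/2}L^{-1/2}$ is treated analogously: via the Duhamel identity $e^{-tL}=e^{-t\Delta}-\int_0^t e^{-(t-s)\Delta}Ve^{-sL}\,ds$ one reduces matters to the scalar operator $\Delta^{1/2}e^{-s\Delta}$ (bounded on every $L^p(M)$ with norm $\lesssim s^{-1/2}$, by \eqref{gauss}) plus a lower-order term that is absorbed, after writing $V=|V|^{1/2}\cdot|V|^{1/2}\,\mathrm{sgn}\,V$, by the bound for $|V|^{1/2}e^{-sL}$ just obtained; alternatively one may use the $L^2$ identity $\|\Delta^{1/2}e^{-tL}f\|_2^2=-\tfrac12\tfrac{d}{dt}\|e^{-tL}f\|_2^2-\|(V^+)^{1/2}e^{-tL}f\|_2^2+\|(V^-)^{1/2}e^{-tL}f\|_2^2$ together with the $|V^\pm|^{1/2}L^{-1/2}$ bounds and a square-function argument in the spirit of Section~\ref{sec1}.

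The step I expect to be the main obstacle is the treatment of the negative part: proving the quantitatively sharp weighted heat-kernel bounds for $e^{-tL}$ that simultaneously (a) identify the exact endpoints $p_0'$ and $p_0=\frac{N}{N-2}\cdot\frac{2}{1-\sqrt{1-\alpha}}$ — this uses the ground-state transform by a positive (sub)solution of $Lh=0$ together with the Sobolev inequality of dimension $N$ — and (b) remain robust enough to feed the H\"older argument against \eqref{integ}. In the purely non-negative case $V^-=0$ one has $p_0=\infty$, the Gaussian bound \eqref{gauss} transfers to $e^{-tL}$ directly by the Trotter--Kato product formula, and the proof collapses to the Minkowski/H\"older estimate of the second paragraph together with the elementary reduction of the first.
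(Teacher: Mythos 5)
This theorem is not proved in the paper at all: it is quoted verbatim from \cite{AO} (Theorem~3.9) as a known input, so there is no internal proof to compare against. Measured against the actual proof in \cite{AO}, your overall strategy is the right one and matches theirs in its main lines: reduce $\nabla L^{-1/2}$ to $(\nabla\Delta^{-1/2})(\Delta^{1/2}L^{-1/2})$, use subcriticality \eqref{SC} for the $L^2$ theory, establish (weighted) uniform boundedness of $e^{-tL}$ on $L^p$ for $p\in(p_0',p_0)$, and then run the subordination formula $L^{-1/2}=c\int_0^\infty e^{-tL}\,dt/\sqrt t$ through Minkowski and H\"older against the weighted $L^{r_i}$ norms in \eqref{integ}, splitting at $t=1$; this is exactly how the endpoint $\frac{p_0r}{p_0+r}$ arises. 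You correctly identify that the sharp range $(p_0',p_0)$ for the perturbed semigroup is the hard analytic input.

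There is, however, a genuine gap in your treatment of $\Delta^{1/2}L^{-1/2}$ via the Duhamel identity. First, the iterated integral you would need to control,
\[
\int_0^\infty \frac{dt}{\sqrt t}\int_0^t \big\|\Delta^{1/2}e^{-(t-s)\Delta}\big\|_{p\to p}\,\big\|Ve^{-sL}f\big\|_p\,ds
\;\lesssim\;\int_0^\infty g(s)\Big(\int_s^\infty \frac{dt}{\sqrt{t(t-s)}}\Big)ds ,
\]
does not converge: the inner integral $\int_s^\infty \frac{dt}{\sqrt{t(t-s)}}$ diverges logarithmically at infinity, so the naive Fubini/absolute-convergence argument collapses. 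Second, the Duhamel term carries the full potential $V$, not $|V|^{1/2}$; estimating $\|Ve^{-sL}f\|_p$ costs \emph{two} H\"older factors of $\||V|^{1/2}v(\cdot,\sqrt s)^{-1/r_i}\|_{r_i}$, which forces $\frac1p\ge\frac{2}{r}+\frac{1}{p_0}$ and would only yield the range $p\le\frac{p_0r}{2p_0+r}$, strictly smaller than the claimed $\frac{p_0r}{p_0+r}$. The argument of \cite{AO} avoids both problems by factoring through $(\Delta+V^+)^{1/2}L^{-1/2}$ and using the resolvent representation $L^{-1/2}=\pi^{-1}\int_0^\infty\lambda^{-1/2}(L+\lambda)^{-1}\,d\lambda$ together with the resolvent perturbation formula, where only one factor $(V^-)^{1/2}$ lands on each side and the $\lambda$-integral converges; your alternative ``$L^2$ identity plus square-function argument'' is too vague to close this. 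The non-negative case ($V^-=0$), where Trotter--Kato gives \eqref{gauss} for $e^{-tL}$ and your second paragraph suffices for $|V|^{1/2}L^{-1/2}$, is the part of your sketch that genuinely works as written.
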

\noindent See also \cite{Devyver} for related results and additional information.

The above integrability condition in \eqref{integ} gives then a range of $p$'s for which the Littlewood-Paley-Stein functionals are bounded on $L^p(M)$. Finally, we mention the following negative result (see \cite{Ouhabaz} in the case $M = \RR^N$). 

\begin{proposition} Assume that $M$ satisfies \eqref{doubl2},  \eqref{gauss} and  the local Sobolev inequality 
$$|f(x) - f(x')| \leq C_{x,x'} d(x,x')^{1-N/p} \|\nabla f\|_p.$$ 
Suppose also that  there exists a positive bounded function $\psi$ such that $e^{-tL} \psi = \psi$ for all $t  \geq 0$. If the Littlewood-Paley-Stein functional 
$$ H_\nabla(f) =  \Big(  \int_0^\infty | \nabla e^{-tL} f |^2 dt \Big)^{1/2}$$
is bounded on $L^p(M)$ for some $p > \max(N,2)$, then $V = 0$. Here $N$ is the constant from the doubling condition \eqref{doubl2}.
\end{proposition}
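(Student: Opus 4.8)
The plan is to deduce from the boundedness of $H_\nabla$ the uniform decay $\|\nabla e^{-tL}\|_{{\mathcal L}(L^p(M))}\le Ct^{-1/2}$ and then to play it against the identity $\nabla e^{-tL}\psi=\nabla\psi$, which holds for every $t>0$ precisely because $e^{-tL}\psi=\psi$. If $\psi$ were in $L^p(M)$ this would already force $\|\nabla\psi\|_p=\|\nabla e^{-tL}\psi\|_p\le Ct^{-1/2}\|\psi\|_p\to 0$ and hence $\nabla\psi\equiv 0$; the whole difficulty is that $\psi$ is only bounded. The device to get around this will be a truncation in which the truncation radius $R$ and the time $t$ are sent to infinity together, at a rate for which the hypothesis $p>N$ is exactly what is needed.

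First I would record the decay estimate. By Proposition~\ref{prop-multiplication}, the $L^p$-boundedness of $H_\nabla$ gives $\|\nabla f\|_p\le C\|Lf\|_p^{1/2}\|f\|_p^{1/2}$ for $f$ in the $L^p$-domain of $L$; applying this to $f=e^{-tL}g$ and using the analyticity of $(e^{-tL})$ on $L^p(M)$ (so that $\|Le^{-tL}\|_{{\mathcal L}(L^p)}\le Ct^{-1}$) together with $\|e^{-tL}\|_{{\mathcal L}(L^p)}\le 1$, one gets $\|\nabla e^{-tL}\|_{{\mathcal L}(L^p(M))}\le Ct^{-1/2}$ for all $t>0$.

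Next, fix $o\in M$, a smooth compactly supported vector field $g$ (supported in a fixed ball $K$), and for $R>0$ a cutoff $\phi_R$ equal to $1$ on $B(o,R)$ and supported in $B(o,2R)$; set $\psi_R:=\psi\phi_R\in L^2(M)\cap L^p(M)$. Integration by parts and the self-adjointness of $e^{-tL}$ on $L^2(M)$ give
\[
\langle \nabla e^{-tL}\psi_R,\,g\rangle=-\langle \psi_R,\,e^{-tL}\operatorname{div}g\rangle=-\langle\psi,\operatorname{div}g\rangle+\langle \psi-\psi_R,\,e^{-tL}\operatorname{div}g\rangle ,
\]
where I used $\langle\psi,e^{-tL}\operatorname{div}g\rangle=\langle e^{-tL}\psi,\operatorname{div}g\rangle=\langle\psi,\operatorname{div}g\rangle$. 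On one hand the left-hand side is at most $Ct^{-1/2}\|\psi_R\|_p\|g\|_{p'}\le Ct^{-1/2}R^{N/p}$, by the decay estimate and the doubling property \eqref{doubl2}. On the other hand $\psi-\psi_R$ is supported in $M\setminus B(o,R)$ and bounded by $\|\psi\|_\infty$, so the Gaussian bound \eqref{gauss} (inherited by the kernel of $L$) together with \eqref{doubl2}, via the usual annular decomposition, yields $\int_{M\setminus B(o,R)}|e^{-tL}\operatorname{div}g|\,dx\le C(R/\sqrt t)^Ne^{-cR^2/t}$ once $R$ exceeds the diameter of $K\cup\{o\}$ and $\sqrt t$. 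Hence, with $A:=\langle\psi,\operatorname{div}g\rangle$,
\[
|A|\le Ct^{-1/2}R^{N/p}+C(R/\sqrt t)^Ne^{-cR^2/t}.
\]
Since $p>\max(N,2)\ge N$ the interval $(\tfrac12,\tfrac{p}{2N})$ is non-empty; choosing $\theta$ in it and setting $R=t^\theta$ gives $t^{-1/2}R^{N/p}=t^{\theta N/p-1/2}\to0$ and $(R/\sqrt t)^Ne^{-cR^2/t}=t^{N(\theta-1/2)}e^{-ct^{2\theta-1}}\to 0$ as $t\to\infty$. Therefore $A=0$, i.e. $\langle\psi,\operatorname{div}g\rangle=0$ for every such $g$, which means $\nabla\psi=0$ in the sense of distributions; as $M$ is connected, $\psi\equiv c$ for some constant $c>0$, and then $0=L\psi=\Delta c+Vc=Vc$ forces $V=0$.

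The point that needs the most care is the simultaneous limit in $R$ and $t$: one must check that the two error terms can be driven to zero at once, and it is exactly here that $p>N$ is used, through the non-emptiness of $(\tfrac12,\tfrac{p}{2N})$. The remaining ingredients are routine: the Gaussian-tail bound on $\int_{M\setminus B(o,R)}|e^{-tL}\operatorname{div}g|\,dx$ follows from \eqref{doubl2} and \eqref{gauss} in the standard way, while the local Sobolev inequality enters only to guarantee a priori that $\psi$ and the $e^{-tL}\psi_R$ have gradients in $L^p_{loc}$, legitimising the integration by parts used above.
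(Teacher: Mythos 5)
Your derivation of $\|\nabla e^{-tL}\|_{\mathcal{L}(L^p)}\le Ct^{-1/2}$ from the boundedness of $H_\nabla$ coincides with the paper's first step (via Proposition~\ref{prop-multiplication} and analyticity). After that the two proofs diverge completely: the paper simply invokes Theorem~6.1 of \cite{ChenMagOuh} as a black box, whereas you give a self-contained argument by truncating $\psi$ to $\psi_R=\psi\phi_R$, pairing against a compactly supported test field $g$, using the symmetry of the heat kernel to write $\langle\psi,e^{-tL}\operatorname{div}g\rangle=\langle\psi,\operatorname{div}g\rangle$, and then driving $R=t^\theta\to\infty$ with $\theta\in(\tfrac12,\tfrac{p}{2N})$. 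The choice of window is exactly right: the annular decomposition with doubling and \eqref{gauss} does give $\int_{M\setminus B(o,R)}|e^{-tL}\operatorname{div}g|\lesssim (R/\sqrt t)^Ne^{-cR^2/t}$, the first error term $t^{-1/2}R^{N/p}$ vanishes precisely when $\theta<p/(2N)$, and the non-emptiness of the window is equivalent to $p>N$. This is a genuinely different and, as far as one can tell from the present paper, more elementary route: the cited result in \cite{ChenMagOuh} goes through the Poincar\'e/local Sobolev inequality to upgrade the $L^p$-gradient decay to pointwise H\"older control of the heat kernel (as in the $M_n=\RR^n\#\RR^n$ discussion earlier in the same section), whereas your argument uses only doubling, the Gaussian upper bound and the operator-norm decay; the local Sobolev inequality is not actually needed where you claim it is, since the identification of the bounded operator $\nabla e^{-tL}$ with the distributional gradient of $e^{-tL}\psi_R$ already follows from $\psi_R\in L^2(M)$ and the $L^2$-theory.

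Two small points would merit a line or two more in a final write-up. First, the passage ``$0=L\psi=\Delta c+Vc=Vc$'' is heuristic because the constant $c$ need not lie in the $L^2$- or $L^p$-domain of $L$; a clean way to close it is to note that, for any nonnegative $\phi\in C_c^\infty(M)$, the kernel symmetry gives $\langle e^{-tL}\phi,c\rangle=\langle\phi,c\rangle$, while domination $e^{-tL}\phi\le e^{-t\Delta}\phi$ (since $V\ge0$) forces $e^{-tL}\phi=e^{-t\Delta}\phi$, and then the Duhamel identity and positivity of the Gaussian kernel yield $V\phi=0$, hence $V=0$. Second, the step $\langle e^{-tL}\psi_R,\operatorname{div}g\rangle=\langle\psi_R,e^{-tL}\operatorname{div}g\rangle$ uses $\psi_R\in L^2$, while the step $\langle\psi,e^{-tL}\operatorname{div}g\rangle=\langle e^{-tL}\psi,\operatorname{div}g\rangle$ uses Fubini with the symmetric Gaussian kernel, $\psi\in L^\infty$ and $\operatorname{div}g\in L^1$; it is worth saying explicitly that these are two slightly different justifications. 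Neither point affects the correctness of the argument.
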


\begin{proof}
Assume that $H$ is bounded on $L^p$, then for suitable $f$,
$$\| \nabla f\|_p \leq C\, \| f \|^{1/2}_p \| L f\|^{1/2}_p.$$ 
Taking $f = e^{-tL} g$ and using the analyticity of the semigroup we obtain for all $g \in L^p$ $$ \|\nabla e^{-tL} g\|_p \leq \frac{C}{t^{1/2}} \|g \|_p.$$ 
We conclude using Theorem~6.1 in \cite{ChenMagOuh}. Note that in this  reference, it is assumed  that $M$ satisfies Poincar\'e inequalities, which in turn imply the above local Sobolev inequality. It is this later inequality which is used in the proof there.
\end{proof}

\section{Elliptic operators on domains}\label{sec6}
We have chosen to write the previous sections  in the framework of Schr\"odinger operators on manifolds. The results remain valid for elliptic operators with real bounded measurable coefficients and subject to Dirichlet boundary conditions on a domain. The proofs, after a little adaptation,  are  the same. 

Let $\Omega$ be an open subset of $\RR^N$ ($N \ge 1$). We consider for $k,l \in \{1,...,N\}$ bounded measurable functions
$ a_{kl} = a_{lk}: \Omega \to \RR$. We suppose the usual ellipticity condition
\[
\sum_{k,l=1}^N a_{kl}(x) \xi_k \xi_l \ge \nu | \xi |^2
\]
for all $\xi = (\xi_1,...,\xi_n) \in \RR^N$, where $\nu > 0$ is a constant independent of $x$. Set $A(x) := (a_{kl}(x))_{1\le k,l \le N}$. We define the elliptic operator 
$L = - div(A(x) \nabla \cdot)$ with Dirichlet boundary conditions. It is the operator associated with the symmetric form
\[
{\gothic a}(u,v) = \sum_{k,l=1}^N \int_\Omega a_{kl} \partial_k u \partial_l v, \quad u, v \in W^{1,2}_0(\Omega).
\]
It is  known that the heat kernel of $L$ satisfies a Gaussian upper bound and the Riesz transform $\nabla L^{-1/2}$ is bounded on 
$L^p(\Omega)$ for all $p \in (1, 2]$. In addition, $L$ satisfies spectral multiplier theorems. See \cite[Chapters VI and VII]{Ouh05}. The fact that $\Omega$, endowed with the Euclidean distance and Lebesgue measure,  may not satisfy the doubling property \eqref{doubl}\footnote{except if $\Omega$ is bounded and has smooth boundary, Lipschitz boundary is enough.} can be bypassed in the proofs of the boundedness of the Riesz transform and spectral multipliers. \\
Thus, the Littlewood-Paley-Stein estimates \eqref{eq2-1} and \eqref{eq2-3} hold on $L^p(\Omega)$ for all $p \in (1, 2]$. More precisely,
\begin{equation}\label{eq6-1}
\Big\|  \Big( \sum_k \int_0^\infty | \nabla  m_k(L) F(tL) f_k |^2 dt \Big)^{1/2} \Big\|_{L^p(\Omega)} \le C\,  \sup_{k} \| m_k \|_{H^\infty(\Sigma(\theta))}\,  \Big\| \Big( \sum_k |f_k|^2 \Big)^{1/2} \Big\|_{L^p(\Omega)}
\end{equation}
for bounded holomorphic functions $m_k$ and $F$ on a sector of angle $\theta > 0$\footnote{here we may take any $\theta > 0$ and not necessarily $\omega_p$ as in Theorem~\ref{thm-hol}. The reason is that the Gaussian upper bound implies the existence of a bounded holomorphic functional with angle $\theta$. This follows readily from the fact that $L$ satisfies spectral multiplier theorems.}. If the functions $m_k$ are supported in $[\frac{1}{2},2]$ and belong to the Sobolev space $W^{\delta, 2}(\RR)$ for some $\delta  > N | \frac{1}{2}-\frac{1}{p}| + \frac{3}{2}$, then
\begin{equation}\label{eq6-2}
 \Big\|  \Big( \sum_k \int_0^\infty | \nabla  m_k(tL) f_k |^2 dt \Big)^{1/2} \Big\|_{L^p(\Omega)} 
 \le C\,  \sup_{k} \| m_k \|_{W^{\delta,2}}\,  \Big\| \Big( \sum_k |f_k|^2 \Big)^{1/2} \Big\|_{L^p(\Omega)}.
\end{equation}
From this and little modifications in the proofs in Section \ref{sec4} we obtain the lower bounds on $L^q(\Omega) \cap L^2(\Omega)$ for all $q \in [2, \infty)$. In particular,
\begin{equation}\label{eq6-3}
C\, \| f \|_{L^q(\Omega)} \le  \Big\| \Big( \int_0^\infty | \nabla e^{-tL} f |^2 dt \Big)^{1/2} \Big\|_{L^q(\Omega)},
\end{equation}
and
\begin{equation}\label{eq6-4}
C\, \| f \|_{L^q(\Omega)} \le \| e^{-L} f \|_{L^q(\Omega)} + \Big\| \Big( \int_0^1 | \nabla e^{-tL} f |^2 dt \Big)^{1/2} \Big\|_{L^q(\Omega)}.
\end{equation}
It is remarkable that no regularity assumption is required  on the domain nor on the coefficients of the operator. For another proof of \eqref{eq6-4} and related inequalities on a smooth domain, we refer to \cite{Planchon}. 

In the next result we show that  if $\Omega = \RR^N$, the previous lower bounds  hold for all $q \in (1, \infty)$.
\begin{proposition}\label{rev-all-q}
Let  $L = - div(A(x) \nabla \cdot)$ be a self-adjoint elliptic operator  with real bounded measurable coefficients $a_{kl}$. Then for all $q \in (1, \infty)$
\begin{equation}\label{eq6-5}
C\, \| f \|_{L^q(\RR^N)} \le  \Big\| \Big( \int_0^\infty | \nabla e^{-tL} f |^2 dt \Big)^{1/2} \Big\|_{L^q(\RR^N)}
\end{equation}
and
\begin{equation}\label{eq6-6}
C\, \| f \|_{L^q(\RR^N)} \le \| e^{-L} f \|_{L^q(\RR^N)} + \Big\| \Big( \int_0^1 | \nabla e^{-tL} f |^2 dt \Big)^{1/2} \Big\|_{L^q(\RR^N)}.
\end{equation}
\end{proposition}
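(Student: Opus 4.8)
The plan is to run the duality scheme of Section~\ref{sec4}, but to move the gradient onto a single factor by means of the \emph{reverse Riesz inequality}, which on $\RR^N$ is available for the whole range $q\in(1,\infty)$ (this is precisely what replaces the boundedness of $H_\Gamma$ on $L^{q'}$, only available for $q'\le2$ in the general setting). Fix $q\in(1,\infty)$ with conjugate $q'$, and take $f\in L^q(\RR^N)\cap L^2(\RR^N)$, $g\in L^{q'}(\RR^N)\cap L^2(\RR^N)$. Since $0$ is not an eigenvalue of $L$ on $L^2(\RR^N)$ (if $Lf=0$ then $\langle A\nabla f,\nabla f\rangle=0$, so $f$ is constant, hence $0$), we have $e^{-2tL}f\to0$ in $L^2$, and exactly as in the proof of Proposition~\ref{prop-Q},
\[
\langle f,g\rangle \;=\; 2\int_0^\infty \langle L^{1/2}e^{-tL}f,\,L^{1/2}e^{-tL}g\rangle\,dt .
\]
By the Cauchy--Schwarz inequality in $t$ and H\"older's inequality,
\[
|\langle f,g\rangle| \;\le\; 2\left\| \Big(\int_0^\infty |L^{1/2}e^{-tL}f|^2\,dt\Big)^{1/2}\right\|_q
\left\| \Big(\int_0^\infty |L^{1/2}e^{-tL}g|^2\,dt\Big)^{1/2}\right\|_{q'} .
\]
Since $z\mapsto z^{1/2}e^{-z}$ lies in $H_0^\infty(\Sigma(\omega))$ for $\omega<\pi/2$, the square function estimate \eqref{eq-sqf} (valid for \emph{all} $q'\in(1,\infty)$, as it uses only the bounded holomorphic functional calculus of $L$) bounds the $g$-factor by $C\|g\|_{q'}$.

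It remains to prove the vector-valued reverse Riesz estimate
\[
\left\| \Big(\int_0^\infty |L^{1/2}e^{-tL}f|^2\,dt\Big)^{1/2}\right\|_q \;\le\; C\left\| \Big(\int_0^\infty |\nabla e^{-tL}f|^2\,dt\Big)^{1/2}\right\|_q .
\]
Here I would use that on $\RR^N$ one can factor $L^{1/2}=\big(L^{1/2}(-\Delta)^{-1/2}\big)\circ(-\Delta)^{1/2}$ and write $(-\Delta)^{1/2}=\sum_j R_j\partial_j$ with $R_j$ the classical Riesz transforms, so that $L^{1/2}e^{-tL}f=\big(L^{1/2}(-\Delta)^{-1/2}\big)\big(\sum_j R_j\,\partial_j e^{-tL}f\big)$. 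Each of the operators $R_j$ and $L^{1/2}(-\Delta)^{-1/2}$ is bounded on $L^q(\RR^N)$ for every $q\in(1,\infty)$ — classically for $R_j$, and for $L^{1/2}(-\Delta)^{-1/2}$ because it is the reverse Riesz operator of a divergence-form operator with Gaussian heat-kernel bounds (Auscher, Auscher--Tchamitchian). A single bounded operator is a (trivially) ${\mathcal R}$-bounded family, so Lemma~\ref{lem1-0} upgrades each bound to its $L^2((0,\infty),dt)$-valued form; applying this first to the $R_j$ and then to $L^{1/2}(-\Delta)^{-1/2}$, with the input $\partial_j e^{-tL}f$, yields the displayed inequality. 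Combining the three displays and invoking density gives $C\|f\|_q\le\big\|\big(\int_0^\infty|\nabla e^{-tL}f|^2dt\big)^{1/2}\big\|_q$, i.e. \eqref{eq6-5}.

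For \eqref{eq6-6} the same argument is run starting from $\langle f,g\rangle=\langle e^{-2L}f,g\rangle+2\int_0^1\langle L^{1/2}e^{-tL}f,L^{1/2}e^{-tL}g\rangle\,dt$: the first term is $\le\|e^{-L}f\|_q\|g\|_{q'}$ since $e^{-L}$ is a contraction on $L^q$, while in the remaining integral one uses Cauchy--Schwarz on $t\in(0,1)$, bounds the $g$-factor by $C\|g\|_{q'}$ via \eqref{eq-sqf}, and bounds the $f$-factor by $C\big\|\big(\int_0^1|\nabla e^{-tL}f|^2dt\big)^{1/2}\big\|_q$ exactly as above, Lemma~\ref{lem1-0} now being applied over the interval $(0,1)$. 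The step I expect to be the real content — and the only place the hypothesis $\Omega=\RR^N$ is used — is the $L^q$-boundedness of $L^{1/2}(-\Delta)^{-1/2}$ for the full range $q\in(1,\infty)$ (equivalently the reverse Riesz inequality $\|L^{1/2}h\|_q\lesssim\|\nabla h\|_q$), since it rests on the comparison $(-\Delta)^{1/2}\simeq\nabla$ that holds only on Euclidean space; everything else is routine bookkeeping with the tools of Sections~\ref{sec1}--\ref{sec4}.
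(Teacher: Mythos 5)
Your proof is correct and follows essentially the same route as the paper: both reduce \eqref{eq6-5} to the reverse square-function estimate $C\,\|f\|_q\le\bigl\|\bigl(\int_0^\infty|L^{1/2}e^{-tL}f|^2\,dt\bigr)^{1/2}\bigr\|_q$ (you re-derive it by duality, the paper invokes \eqref{eq-sqf-rev}) and then replace $L^{1/2}$ by $\nabla$ via the Auscher--Tchamitchian representation $L^{1/2}=U\nabla$ with $U$ a Calder\'on--Zygmund operator --- your factorization $L^{1/2}=\bigl(L^{1/2}(-\Delta)^{-1/2}\bigr)\circ\sum_j R_j\partial_j$ is exactly this $U$ written out --- upgraded to the $L^2(dt)$-valued setting through Lemma~\ref{lem1-0}. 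The only cosmetic divergence is in \eqref{eq6-6}: you run the duality directly over $t\in(0,1)$, whereas the paper deduces it from \eqref{eq6-5} by splitting the time integral and absorbing the tail via the boundedness of $H_\nabla$ on $L^q$ for $q\in(1,2)$; both work, and you correctly isolate the $L^q$-boundedness of $L^{1/2}(-\Delta)^{-1/2}$ for all $q\in(1,\infty)$ as the one genuinely Euclidean ingredient.
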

\begin{proof} Because of \eqref{eq6-3} and \eqref{eq6-4} we  consider the case  $q \in (1,2)$ only. \\
Since the semigroup $e^{-tL}$ is sub-Markovian, $L$  has a bounded  holomorphic functional calculus on $L^p(\RR^N)$. Therefore,  it has bounded square functions on $L^p(\RR^N)$ for all $p \in (1, \infty)$. A standard duality argument gives then (for $q \in (1, \infty)$)
\begin{equation}\label{eq6-7}
C\, \|f\|_{L^q(\RR^N)} \le \Big\| \Big( \int_0^\infty | L^{1/2} e^{-tL} f |^2 dt \Big)^{1/2} \Big\|_{L^q(\RR^N)}.
\end{equation}
On the other hand, it follows from \cite[Theorem~2, p.115]{AuscherAsterisque} that there exists a Calder\'on-Zygmund operator $U$ such that  $L^{1/2}f = U \nabla f$. Therefore,
\[
C\, \|f\|_{L^q(\RR^N)} \le \Big\| \Big( \int_0^\infty | U \nabla e^{-tL} f |^2 dt \Big)^{1/2} \Big\|_{L^q(\RR^N)}
\]
The operator $U$ is bounded on $L^q(\RR^N)$. We then apply the same strategy of proof as in Theorem~\ref{thm-hol} and use the Kahane inequality to bound the RHS term by 
\[
C'\, \Big\| \Big( \int_0^\infty | \nabla e^{-tL} f |^2 dt \Big)^{1/2} \Big\|_{L^q(\RR^N)}.
\]
This proves \eqref{eq6-5}. \\
In order to prove \eqref{eq6-6} for $q \in (1,2)$, we write
\begin{eqnarray*}
&&\Big\| \Big( \int_0^\infty | \nabla e^{-tL} f |^2 dt \Big)^{1/2} \Big\|_{L^q(\RR^N)}\\
&&\le \Big\| \Big( \int_0^1 | \nabla e^{-tL} f |^2 dt \Big)^{1/2} \Big\|_{L^q(\RR^N)} + \Big\| \Big( \int_1^\infty | \nabla e^{-tL} f |^2 dt \Big)^{1/2} \Big\|_{L^q(\RR^N)}\\
&& =  \Big\| \Big( \int_0^1 | \nabla e^{-tL} f |^2 dt \Big)^{1/2} \Big\|_{L^q(\RR^N)} + \Big\| \Big( \int_0^\infty | \nabla e^{-tL} e^{-L}f |^2 dt \Big)^{1/2} \Big\|_{L^q(\RR^N)}\\
&& \le \Big\| \Big( \int_0^1 | \nabla e^{-tL} f |^2 dt \Big)^{1/2} \Big\|_{L^q(\RR^N)} + C''\, \| e^{-L} f \|_p.
\end{eqnarray*}
Note that in the last inequality we use the boundedness of the Littlewood-Paley-Stein functional on $L^q(\RR^N)$ for $q \in (1,2)$. Now, \eqref{eq6-6} follows from \eqref{eq6-5} and the previous inequality. 
\end{proof}
We finish this section by mentioning another sort of Littlewood-Paley-Stein functionals, called  {\it conical vertical square functions},  and defined by
\[
S f(x) := \Big( \int_0^\infty \int_{|x-y| < \sqrt{t}} | \nabla_y e^{-tL} f(y) |^2 \frac{dy\, dt}{t^{N/2}} \Big)^{1/2}.
\]
It is proved in \cite{Auscher-Hofmann}, among other things,  that $S$ is bounded on $L^p(\RR^N)$ for all 
$p \in (1,\infty)$.  Thus, the  functionals  $S$ and $H$ have different behavior on $L^p(\RR^N)$ for $p > 2$.  It is of interest to  study the corresponding conical functionals $S$ for Schr\"odinger operators on manifolds or for elliptic operators on arbitrary domains of $\RR^N$. This will be done  in a forthcoming  project. 


\end{document}